\documentclass{amsart}

\usepackage{amssymb}
\usepackage{amsmath,amsfonts,amsthm,amstext}
\usepackage{amsmath}
\usepackage{mathrsfs}  
\usepackage[all]{xy}
\usepackage{xcolor}
\usepackage{tikz-cd}

\newtheorem{theorem}{Theorem}[section]
\newtheorem{lemma}[theorem]{Lemma}

\newtheorem{prop}[theorem]{Proposition}
\newtheorem{cor}[theorem]{Corollary}

\newcommand{\Z}{\mathbb{Z}}
\newcommand{\z}{\mathbb{Z}}

\newcommand{\Q}{\mathbb{Q}}

\newcommand{\X}{{\mathcal X}}
\theoremstyle{definition}

\begin{document}

		\title[]{
		On the residual finiteness of the non-abelian  exterior square of some Artin groups,  constructible soluble groups, generalised Baumslag-Solitar groups  and $SL_n(\Z)$} \author{Lucas Barroso Rocha, Dessislava H. Kochloukova}
	\address{State University of Campinas (UNICAMP), SP, Brazil \\
		\newline
		email : lucasbarrosorocha@hotmail.com, desi@unicamp.br
		} 
	\email{}
	\date{}
	\keywords{}

\begin{abstract} We show that cohomological and homological goodness are equivalent. We prove that if $G$ is  a coherent even Artin group or an even Artin group whose  underlying graph does not contain 4-clique  or
 a LERF Artin group then    $G \otimes G$ and $G \wedge G$ are residually finite.  Furthermore for $G$ a RAAG both  $G \otimes G$ and $G \wedge G$ are residually $p$-finite for any prime integer $p$.
 For $G$ a limit group  or a Coxeter group  we show that  $G \wedge G$ is residually finite. 
 For $G$  a soluble group of type $FP_{\infty}$ we prove that the groups $\X(G)$, $\nu(G)$, $G \wedge G$ are virtually residually $p$-finite for almost all primes   $p$.
For $G$ a generalised Baumslag-Solitar group of rank $n \geq 1$ that is residually finite, we show that $G \wedge G$ is residually finite.
 For $G = SL_n(\Z)$ and $G = GL_n( \mathbb{Z})$, $n \geq 3$,  we show that $G \wedge G$ is residually finite. 
\end{abstract}

\maketitle

\section{Introduction}
 In this paper we continue the study of residually  finite  groups $G$ such that the non-abelian tensor product $G \otimes G$ and the non-abelian exterior product $G \wedge G$ are residually finite. We use homological   methods and make use of a  homological  version of the notion of good groups defined by  Serre in his book \cite{Serre}. Homological methods were quite often used in the early days of the study of the non-abelian tensor product $G \otimes G$, defined by  Dennis \cite{D} and developed by Brown and Loday \cite{B-L1,B-L2}. 
 Using homological methods Ellis showed in \cite{Ellis}  that if $G$ is finite then $G \otimes G$ is finite.
 In \cite{B-J-R}   Brown, Johnson and  Robertson proved  that for a free group $F$ of finite rank $n$ we have  $$F \otimes
F \simeq \mathbb{Z}^{n(n+1)/2} \times F',$$ hence $F \otimes F$ is residually finite.   In general there is a short exact sequence of groups
\begin{equation} \label{ses25}  1 \to \Delta(G) \to G \otimes G \to G \wedge G \to 1 \end{equation}
where $\Delta(G)$ is a central normal subgroup of $G \otimes G$ that is a quotient of the quadratic Whitehead functor $\Gamma(G^{ab})$ for $G^{ab}$ the abelianization of $G$. In the case $G = F$ we have $F \wedge F \simeq F'$ the commutator subgroup, so the short exact sequence (\ref{ses25}) splits. This argument was generalised in \cite{B-F-M} 
 where  Blyth, Fumagalli and Morigi  showed that if $G^{ab}$ is finitely generated and either $G^{ab}$ is 2-torsion free or $G'$ has complement in $G$ then $G \otimes G \simeq \Delta(G) \times ( G \wedge G)$.  It is important to note that by \cite{B-L1, B-L2} for a general group $G$ there is a short exact sequence $$1 \to H_2(G, \Z) \to G \wedge G \to G' \to 1$$  In particular, if $G$ is perfect $G \wedge G$ is isomorphic to the universal central extension of $G$.

 \subsection{Goodness} \ \ \ \ \ \ \ \ \ \ \ \ \ \ \ \ \ \medskip

 The notion of (cohomologically) good groups was developed by Serre in \cite{Serre}. A discrete group $G$ is called good if for the profinite completion $\widehat{G}$ of $G$ we have that the canonical map $G \to \widehat{G}$ induces an isomorphism \begin{equation} \label{introductionGood} H^i(\widehat{G}, A) \to H^i(G, A)\end{equation} for every finite discrete $\widehat{G}$-module $A$ and every $i \geq 0$.  Note that it will be clear from the context whether the (co)homology groups we use are in the category of profinite groups (are the so called countinuous ones) or in the category of discrete groups. Furthermore we will call good groups in Serre's sense (cohomologically) good.
 Following Serre \cite{Serre}  a group $G $ belongs to the class $\mathcal{A}_n$ if  (\ref{introductionGood}) holds for all $i \leq n$. In \cite{A-B-K}  a homological version $\mathcal{A}_n^{hom}$ of this property was considered:  a group $G $ belongs to the class $\mathcal{A}_n^{hom}$ if  the canonical map $G \to \widehat{G}$ induces an isomorphism $$H_i(G, A) \to H_i(\widehat{G}, A) $$ for every finite discrete  $\widehat{G}$-module $A$ and  $0 \leq i \leq n$. In \cite{A-B-K}  Anjos Teixeira, Barroso Rocha and Kochloukova  proved that if $G \in 
\mathcal{A}_2^{hom}$ and $H_2(G, \Z)$ is finitely generated then the map $$H_2(G, \Z) \to H_2( \widehat{G}, \widehat{\Z}) \hbox{ is injective }$$ and if $G$ is residually finite the injectivity of the above map implies that  $G \wedge G$ is residually finite.  It is important to note that the map $H_2(G, \Z) \to H_2( \widehat{G}, \widehat{\Z})$ factors through  the  map $\alpha:  H_2(G, \Z) \to \varprojlim_U H_2(G/U, \Z)$ and the injectivity of $\alpha$ together with the residual finiteness of $G$ is sufficient to deduce that $G \wedge G$ is residually finite.

  Our first result proves that equivalence between cohomological and homological goodness.
 
 \medskip
 {\bf Theorem A} {\it Let $G$ be a discrete group,  $\mathcal{T}$ be a set of normal subgroups of finite index in $G$ such that $\mathcal{T}$ defines the profinite topology of $G$ and for every prime integer $p$ consider $\mathbb{F}_p$ as the trivial $G$-module. Then the following are equivalent:

a) $G \in {\mathcal{A}}_n$;

b) $G \in {\mathcal{A}}_n^{hom}$;

c)  the map $H^i ( \widehat{U}, \mathbb{F}_p) \to H^i(U, \mathbb{F}_p)$ is an isomorphism for all $ i \leq n$, $U \in \mathcal{T}$ and for every prime integer $p$;

d)  the map $H_i(U, \mathbb{F}_p) \to H_i ( \widehat{U}, \mathbb{F}_p)$ is an isomorphism for all $ i \leq n$,  $U \in \mathcal{T}$ and  for every prime integer $p$.}
 
 \medskip Theorem A  generalises earlier results of Wilkes \cite[Prop. 2.2.2]{Wil} and Hillman and Kochloukova \cite{H-K}  that require the additional hypothesis that $G$ is of homological type $FP_{\infty}$. In section \ref{goodness} we establish a pro-p version of Theorem A.

\medskip

\subsection{Artin groups, graph product of groups, limit groups and Coxeter groups} \ \ \ \ \ \ \ \ \ \ \ \ \ \ \ \ \ \medskip

 Let $\Gamma$ be a  labelled finite simplicial graph, where  each edge has a label   an integer greater or equal to 2. The Artin group $G = G_{\Gamma}$ is generated by the set of vertices $V(\Gamma)$ and relations of the following type: for each edge $e$ in $\Gamma$ that links two vertices, say $a$ and $b$, we have the relation $aba \ldots = bab \ldots$ where both sides of the equality have length $m$ and $m$ is the label of  $e$. In the case when all labels are 2 the group is a right angled Artin group (RAAG). 
 In \cite{L2} Lorensen proved that a RAAG $G$ is residually $p$-finite i.e. the canonical map $G \to \widehat{G}_p$ is injective, for $\widehat{G}_p$ the pro-$p$ completion of $G$.  

\medskip

{\bf Theorem B} {\it Let $G$ be a RAAG. Then $G \wedge G$ and $G \otimes G$ are  residually $p$-finite for every prime $p$.}

\medskip
 In the next result we consider specific Artin groups.
Note that although many classes of Artin groups are residually finite in general not every Artin group is residually finite. In \cite{W-Y} Wu and Ye proved that the Artin group with a triangle graph and labels pairwise coprime integers has a simple commutator, in particular such group is not residually finite. In contrast Blasco-García,  Martínez-Pérez and Paris proved in \cite{BG-MP-L}  that even Artin groups of FC type are residually finite  and poly-free.  We will show in Theorem C that these groups are (cohomologically) good.
 Recall that an Artin group is called even if all labels of the corresponding graph are even.
In \cite{J-S}  Jankiewicz and Schreve showed that every even Artin group whose  underlying graph does not contain 4-clique is residually finite and (cohomologically) good.
We observe that by Theorem A we know that cohomological goodness and homological goodness coincide but we will continue referring to the common property as cohomological goodness, as it is known by this name.

A group is LERF if any finitely generated subgroup is closed in the profinite topology. The classification of  LERF Artin groups was completed by Almeida and Lima  in \cite{A-L}. The description  of LERF Artin groups $G$ in \cite{A-L}  implies that each connected component of the underlying graph of $G$ contains at most one edge with odd label.  Thus though these groups in general are not even Artin groups they are not too far away from being even.

  The class of limit groups over free groups was  studied by Sela and independently by Kharlampovich and  Miasnikov under the name of {fully residually free groups}, and it was quite  useful  in their work on solutions of systems of equations over free groups. Limit groups are residually free, in particular are residually finite. They are finitely presented and of type $FP_{\infty}$. In   \cite{G-J-Z} Grunewald,  Jaikin-Zapirain and Zalesskii showed that limits groups are (cohomologically) good.

By a result of Tits Coxeter groups are linear over $\mathbb{C}$, in particular they are  residually finite. Recently Hughes, M\"oller and  Verghese proved that Coxeter groups are (cohomologically) good \cite{H-M-V}.

\medskip
{\bf Theorem C} {\it Let $G$ be a    group of one of the following types:

a)  an even Artin group of FC type,

b)  an even Artin group whose  underlying graph does not contain 4-clique,

c) a LERF Artin group,

 d) a graph product of residually finite, (cohomologically) good groups,
 
  e) a limit group,

f) a Coxeter group.

Then $G$ is residually finite,  (cohomologically) good \iffalse{ of type $FP_{\infty}$}\fi  and $G \wedge G$ is residually finite. In cases a),b) and c)  $G \otimes G$ is residually finite.}

\medskip

\subsection{The Deligne example: $Sp_{2n}(\Z) \wedge Sp_{2n}(\Z)$ is not residually finite} \label{Deligne0}
 \ \ \ \ \ \ \ \ \ \ \ \ \ \ \ \ \  \ \ \ \ \ \ \ \ \ \ \ \ \ \ \ \ \ \ \  \ \ \ \ \ \ \ \ \ \ \ \ \ \ \ \ \ \ \ \

 \medskip
 In \cite{Deligne} Deligne showed that for $n \geq 2$ there is a central extension $$ 1 \to \Z \to \widetilde{ Sp_{2n}(\Z)} \to Sp_{2n}(\Z) \to 1$$ with $\widetilde{ Sp_{2n}(\Z)}$ not residually finite.  Actually Deligne worked with a subgroup of finite index in $ \widetilde{ Sp_{2n}(\Z)}$ and showed it is not residually finite. Assume now that $ n \geq 4$.
 In  \cite[Thm. 5.1]{Put} Putman showed that there is an isomorphism   $H_2(Sp_{2n}(\Z), \Z) \simeq \Z$, hence the above central extension is the universal central extension of  ${ Sp_{2n}(\Z)}$.
Furthermore the image of the central part $\Z$ in the profinite completion of $\widetilde{ Sp_{2n}(\Z)}$ is a  cyclic group of order 2.   Note that   since  $Sp_{2n}(\Z)$ is perfect the universal central extension is actually $ Sp_{2n}(\Z) \wedge  Sp_{2n}(\Z)$, so  $Sp_{2n}(\Z)$  is an example of a residually finite group whose non-abelian exterior square is not residually finite. 

We can view this using our methods: since $Sp_{2n}(\Z)$ has the congruence subgroup property and by the main result of Funar and Pitsch   $H_2(Sp_{2n}(\Z/ d \Z), \Z) \simeq \Z / 2 \Z$ if $4$ divides   $d$  \cite{Fu-Pi},
we get that  $\varprojlim_d H_2(Sp_{2n}(\Z/ d \Z), \Z)$ is either 0 or $\Z/ 2 \Z$, hence  the map  $\alpha:  \z \simeq  H_2(Sp_{2n}(\Z), \Z) \to \varprojlim_d H_2(Sp_{2n}(\Z/ d \Z), \Z)$ is not injective. Then the map $ H_2(Sp_{2n}(\Z), \Z) \to  H_2(\widehat{Sp_{2n}(\Z)}, \widehat{\Z})$ is not injective since it factors through $ \alpha$. By  \cite{A-B-K} if $G \wedge G$ is finitely generated and residually finite then the map $H_2(G, \Z) \to H_2(\widehat{G}, \widehat{\Z})$ is injective and we conclude that $Sp_{2n}(\Z) \wedge Sp_{2n}(\Z)$ is not residually finite.

 \subsection{$SL_n(\Z)$ and $GL_n(\Z)$} \ \ \ \ \ \ \ \ \ \ \ \ \ \ \ \ \ \medskip
 
The group $SL_n(\Z)$ is a finitely generated linear group over a commutative ring, hence it is residually finite. To handle the case of $SL_n(\Z)$  it is natural to use results from algebraic $K$-theory. 
As shown by Milnor in \cite{Milnorbook} for $ n \geq 5$ we have $H_2(SL_n(\Z), \Z) \simeq K_2(\Z) \simeq \Z / 2 \Z$ and similarly $H_2(SL_n(\Z/ s \Z), \Z) \simeq K_2(\Z/ s \Z)$. 
In \cite{Serre} Serre observed that $SL_n(\Z)$ is not (cohomologically) good for $n \geq 3$. Actually $SL_n(\Z) \not\in {\mathcal{A}}_2$ for $n \geq 3$  by the main result in \cite{Hewitt}. 
 We note that $G = SL_2(\Z)$ is virtually free, hence is (cohomologically) good and   in this case  $G \wedge G$ is residually finite.
For $G =   SL_n(\Z)$ we set $G_s =  SL_n(\Z/ s \Z)$.

\vskip 0.3cm
{\bf Theorem D} {\it  a) Let  $n \geq 3$ and  $G = SL_n(\Z)$. Then the map $H_2(G, \Z) \to \varprojlim_s H_2(G_s, \Z)$ is  an isomorphism and  $G \wedge G \simeq G \otimes G$  is residually finite. 

b) Let   $n \geq 3$  and $G = GL_n( \Z)$. Then  the map $H_2(G, \Z) \to H_2( \widehat{G}, \widehat{\Z}
)$ , induced by the canonical map $G \to \widehat{G}$, is  an isomorphism, $G \wedge G$  and $G \otimes G$ are   residually finite. }

\vskip 0.3cm

\subsection{ Soluble groups of type $FP_{\infty}$ and generalised Baumslag-Solitar groups} \ \ \ \ \ \ \ \ \ \ \ \ \ \ \ \ \ \medskip

    By a  result of Kropholler  \cite[Corollary to Thm. B]{Krop}  a group that is soluble of type $FP_{\infty}$ is constructible i.e.  the group can be built
up from the trivial group using finite extensions and HNN extensions. Furthermore by \cite{Strebel} such groups are linear over $\mathbb{Q}$, hence are residually finite by a well-known result of Malcev \cite{Mal}. An improvement to virtual residual $p$-finiteness for almost all prime integers $p$ was proved by Platonov in \cite{Plat}. We prove in Proposition \ref{Highly-res-fin} that   soluble groups of type $FP_{\infty}$ are (cohomologically) good in dimension $\leq 2$, i.e., are elements of $\mathcal{A}_2$, using results of Groves-Hermiller \cite{Gr-He} and Lorensen \cite{L}.

 We will consider generalised Baumslag-Solitar groups whose underlying graph is finite and  connected,  such groups are fundamental groups of finite graphs of groups
with vertex and edge groups isomorphic to $\mathbb{Z}^n$  such that all edge groups have finite index in the corresponding vertex groups. In \cite{Lev2} Levitt classified the residually finite groups among the generalised Baumslag-Solitar groups of rank $n= 1$. This was generalised to general rank $n$ in \cite{L-Sh} by showing that the residually finite ones could be only HNN extension of $\mathbb{Z}^n$ (hence soluble constructible) or virtually $\mathbb{Z}^n$-by-free (hence (cohomologically) good).
 In a technical paper \cite{C-Wy} Cohen and Wykowski  classified which generalised Baumslag-Solitar groups of rank $1$ are (cohomologically) good. Their classification  proves that residually finite generalised Baumslag-Solitar groups of rank $1$ are (cohomologically) good.  Whether this holds for general rank $n$ is an open question.

   The group of weak commutativity $\X(G)$ was defined by Sidki in \cite{Said} and  the group $\nu(G)$ was defined by Rocco in \cite{Norai}. The groups $\X(G)$ and $\nu(G)$ are strongly related and by  \cite[Prop. 2.6]{Norai} the non-abelian tensor-square
  $G \otimes G$ is isomorphic to the normal subgroup $[G, G^{\varphi}]$ of $\nu(G)$.  We will discuss in more details $\nu(G)$ in the preliminary section \ref{sect-prem-nu} and $\X(G)$  at the beginning of section \ref{coherence}.

  \medskip
 {\bf Theorem E}
{\it a) Let $G$ be a soluble group of type $FP_{\infty}$. Then  $G \in \mathcal{A}_2$ and for almost all prime integers  $p$ the groups $\X(G)$, $\nu(G)$, $G \wedge G$ are virtually residually $p$-finite. In particular,  $\X(G)$, $\nu(G)$, $G \wedge G$ are residually finite.

b) Let $G$ be a generalised Baumslag-Solitar group of rank $n \geq 1$ that is residually finite. Then $G \in \mathcal{A}_2$ and  $G \wedge G$ is residually finite.}
 
\medskip
\subsection{The non-abelian  $q$-exterior square} \ \ \ \ \ \ \ \ \ \ \ \ \ \ \ \medskip

The non-abelian  $q$-exterior square $G \wedge^q G$ was defined in  \cite{C-R} for an integer $q \geq 1$, where $G \wedge^1 G \simeq G$ and furthermore $G \wedge^0 G$ can be defined  as $ G \wedge G$.
In section \ref{q-section} we apply Corollary \ref{CorNew}  to deduce in Corollary \ref{q-square-res-finite} and Proposition \ref{q-sln-prop} that for the classes of groups $G$  studied in this paper  $G \wedge^q G$  is residually finite for $q \geq 2$.

\medskip
\subsection{Coherence} \ \  \ \ \ \ \ \ \ \medskip

 We know that the constructions $\nu(G)$ and $\X(G)$ preserve several properties  as $\pi$-finite \cite{Said}, polycyclic-by-finite \cite{LimaOliveira}, soluble (this follows directly by the structure theory developed in \cite{Said}), finitely generated nilpotent \cite{G-R-S}, finitely presented \cite{B-K}.
  In the final section \ref{coherence} of the paper we show that coherence is not preserved under the weak commutativity construction.

\medskip
{\bf Proposition G}
{\it   Let $F$ be a finitely generated  free group of rank {\color{red} $n \geq 2$}. Then 
  
  a) $\nu(F)$ is not coherent.
  
  b) $\X(F)$ is not coherent.}

  \medskip Note that for a free group $F$ we have $F \wedge F \simeq F'$ the commutator subgroup of $F$  and $F \otimes
F \simeq \mathbb{Z}^{n(n+1)/2} \times F'$, so $F \wedge F$  and $F \otimes F$ are definitely coherent in this case.

\subsection{Proofs location} \ \ \ \ \ \medskip

 We prove Theorem A in section \ref{goodness}, Theorem B in section \ref{sectionRAAG}, Theorem C follows from the results in section \ref{SectAr}, Theorem D  is proved in section \ref{SLnSection} and  Theorem E is considered in section  \ref{constr}.

\medskip
{\bf Acknowledgements} This study was financed in part by the Coordenação de Aperfeiçoamento de Pessoal de Nível Superior - Brasil (CAPES) - Finance Code 001 (Barroso Rocha is supported by a PhD Capes grant)  and Kochloukova is partially supported by bolsa de produtividade em pesquisa CNPq 301222/2026-6 and FAPESP 2024/14914-9. 

\section{Preliminaries}

\subsection{Goodness and residually finite groups}

   Recall that a discrete group $G$ is called {\it (cohomologically) good} if for the profinite completion $\widehat{G}$ of $G$ the canonical map $G \to \widehat{G}$ induces an isomorphism $$H^i(\widehat{G}, A) \to H^i(G, A)$$ for every finite discrete $\widehat{G}$-module $A$ and every $i \geq 0$.  The class of groups where  the isomorphism in cohomology is true in dimension $i \leq n$ we denote by $\mathcal{A}_n$.
 As stated in the introduction  a group $G $ belongs to the class $\mathcal{A}_n^{hom}$ if  the canonical map $G \to \widehat{G}$ induces an isomorphism $$H_i(G, A) \to H_i(\widehat{G}, A) $$ for every finite discrete  $\widehat{G}$-module $A$ and  $0 \leq i \leq n$. If this holds for all $i \geq 0$ we say that $G$ is {\it homologically good}. For properties of homology and cohomology of profinite groups we refer the reader to \cite{R-Z}.

Recall that $\widehat{G}$ denotes the profinite completion of a discrete group $G$.

\begin{prop} \cite{Serre} \label{sss}
If $G \in \mathcal{A}_2$ and $1 \to N \to E \to G \to 1$ is a short exact sequence of groups with $N$ finitely generated then there is a short exact sequence of groups $ 1 \to \widehat{N} \to \widehat{E} \to \widehat{G} \to 1$. 
\end{prop}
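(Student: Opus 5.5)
The plan is to compare the two completions directly. Right exactness of profinite completion gives an exact sequence $\widehat{N} \to \widehat{E} \to \widehat{G} \to 1$. So everything reduces to showing that the natural map $\widehat{N} \to \widehat{E}$ is injective. Equivalently, the profinite topology of $E$ must induce on $N$ its full profinite topology: every finite index subgroup $M$ of $N$ should contain $N \cap V$ for some finite index subgroup $V$ of $E$.

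First reduce to characteristic subgroups. Since $N$ is finitely generated, it has only finitely many subgroups of each index. Hence any finite index $M \le N$ contains a characteristic subgroup $K$ of $N$ of finite index, and $K$ is normal in $E$. Pass to $E/K$, which is an extension $1 \to N/K \to E/K \to G \to 1$ with finite kernel $A = N/K$. It now suffices to find a finite index subgroup $V/K$ of $E/K$ meeting $A$ trivially.

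Second, use $\mathcal{A}_2$ to kill the extension after passing to a finite index subgroup. Conjugation gives a map $G \to \mathrm{Out}(A)$ into a finite group. Replacing $G$ by the finite index kernel $G_0$, and $E/K$ by its preimage, we may assume the action is inner. Then $E/K$ contains $A \cdot C$, where $C$ is the centraliser of $A$ in $E/K$, and $C \cap A = Z(A)$. The map $C \to G_0$ has finite index image $G_1$. This yields a central extension $1 \to Z(A) \to C \to G_1 \to 1$ with $Z(A)$ a finite abelian group carrying trivial action.

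Such an extension is classified by a class $\xi \in H^2(G_1, Z(A))$. Finite index subgroups of groups in $\mathcal{A}_2$ are again in $\mathcal{A}_2$; this is standard via Shapiro's lemma and the fact that $\widehat{G_1}$ is open in $\widehat{G}$. Hence $\xi$ comes from $H^2(\widehat{G_1}, Z(A))$. Profinite cohomology is a direct limit over finite quotients, so $\xi$ is inflated from $H^2(G_1/U, Z(A))$ for some finite index normal subgroup $U$ of $G_1$. Restricted to $U$, the inflated class vanishes, so the extension splits over $U$. The splitting gives a subgroup $S \le C$ mapping isomorphically onto $U$, and in particular $S \cap A = 1$. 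Finally, $S$ has finite index in $E/K$, because $U$ has finite index in $G$ and $A$ is finite. Taking $V$ to be the preimage of $S$ in $E$ gives $V \cap N \subseteq K \subseteq M$, as needed.

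The main obstacle is this splitting step. In particular, one must check that only $\mathcal{A}_2$ is needed, via surjectivity of $H^2(\widehat{G_1},-) \to H^2(G_1,-)$, and that $\mathcal{A}_2$ passes to finite index subgroups. Everything else is formal.
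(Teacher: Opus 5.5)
Your argument is correct. The paper gives no proof of this proposition and simply cites Serre, and what you have written is essentially the standard argument behind that citation: you reduce to a finite kernel $A=N/K$ via a characteristic subgroup, pass to the finite-index centraliser to obtain a central extension by $Z(A)$, and then use surjectivity of $H^2(\widehat{G_1},Z(A))\to H^2(G_1,Z(A))$ to split that extension over a finite-index subgroup $U$. One cosmetic point is that the detour through $\mathrm{Out}(A)$ is unnecessary, since $C_{E/K}(A)$, being the kernel of $E/K\to\mathrm{Aut}(A)$, already has finite index in $E/K$; in your setup its image is in fact all of $G_0$.
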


\begin{lemma} \cite{S} \label{S1} If $1 \to N \to E \to G \to 1$ is a short exact sequence of groups with $N, G \in \mathcal{A}_2$ and $N$ finitely generated then $E \in \mathcal{A}_2$.
\end{lemma}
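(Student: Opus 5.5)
We prove Lemma \ref{S1} with the Lyndon--Hochschild--Serre spectral sequence, run in parallel for the discrete extension $1 \to N \to E \to G \to 1$ and for its profinite completion, and compare the two.

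\emph{Step 1: the completed sequence is exact.} Since $G \in \mathcal{A}_2$ and $N$ is finitely generated, Proposition \ref{sss} gives a short exact sequence $1 \to \widehat{N} \to \widehat{E} \to \widehat{G} \to 1$. In particular the profinite topology of $E$ induces the full profinite topology on $N$. The canonical maps then give a morphism of extensions from $1 \to N \to E \to G \to 1$ to $1 \to \widehat{N} \to \widehat{E} \to \widehat{G} \to 1$.

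\emph{Step 2: compare the $E_2$-terms.} Fix a finite discrete $\widehat{E}$-module $A$. We have the discrete spectral sequence $H^p(G, H^q(N,A)) \Rightarrow H^{p+q}(E,A)$ and the profinite one $H^p(\widehat{G}, H^q(\widehat{N},A)) \Rightarrow H^{p+q}(\widehat{E},A)$, with a morphism from the profinite one to the discrete one. We check that this morphism is an isomorphism on $E_2^{p,q}$ for $p+q \le 2$.
\begin{itemize}
\item Since $N \in \mathcal{A}_2$, the restriction $H^q(\widehat{N},A) \to H^q(N,A)$ is an isomorphism for $q \le 2$.
\item Because $N$ is finitely generated, $H^q(N,A)$ is finite for $q \le 1$: $H^0$ is a subgroup of $A$, and $H^1$ is controlled by the finitely many homomorphisms and derivations from a finitely generated group. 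For $q=2$, finiteness follows because $H^2(N,A) \cong H^2(\widehat{N},A)$ and $\widehat{N}$ is a finitely generated profinite group.
\item The $\widehat{G}$-action on $H^q(\widehat{N},A)$ is continuous, so these are finite discrete $\widehat{G}$-modules. The actions on the two sides are compatible via $G \to \widehat{G}$.
\item Since $G \in \mathcal{A}_2$, the map $H^p(\widehat{G}, M) \to H^p(G, M)$ is an isomorphism for $p \le 2$ and these finite modules $M$.
\end{itemize}
Combining these, the comparison map is an isomorphism on $E_2^{p,q}$ for $p+q\le 2$.

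\emph{Step 3: pass to the abutment.} We want $H^i(\widehat{E},A) \to H^i(E,A)$ to be an isomorphism for $i \le 2$. The $E_2$-isomorphisms in total degree $\le 2$ are not quite enough on their own, because the differential $d_2 \colon E_2^{0,2}\to E_2^{2,1}$ lands in total degree $3$. The standard comparison argument only yields an isomorphism in degree $\le 2$ together with injectivity in degree $3$. So we also need the comparison map to be injective on the degree-$3$ terms $E_2^{3,0}$ and $E_2^{2,1}$, say.

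To avoid this, we use Serre's equivalent characterisation: $G \in \mathcal{A}_n$ if and only if every class in $H^i(G,A)$, $1\le i\le n$, is killed on some finite-index subgroup, and $\mathcal{A}_n$ implies injectivity in degree $n+1$. Serre proves in his book that $\mathcal{A}_n$ yields injectivity of $H^{n+1}(\widehat{G},A)\to H^{n+1}(G,A)$. Applying this to $G$ in degree $3$, and to $N$ where needed, gives the required injectivity on $E_2^{3,0}$ and $E_2^{2,1}$. A five-lemma argument on the filtrations of $H^i$ for $i\le 2$, together with the degree-$3$ injectivity, then shows $H^i(\widehat{E},A)\cong H^i(E,A)$ for $i \le 2$.

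\emph{Main obstacle.} The delicate point is the edge behaviour in Step 3, namely making the spectral-sequence comparison work in top degree $2$. If the degree-$3$ injectivity proves awkward, we would instead use Serre's vanishing criterion directly. Take a class in $H^i(E,A)$ with $i\le 2$. First use $N\in\mathcal{A}_2$ to kill its restriction to $N$ on a finite-index subgroup of $N$. Since $\widehat{N}\subset\widehat{E}$, that subgroup contains $N\cap U$ for some open $U$. Then use $G\in\mathcal{A}_2$ to kill the remaining part coming from $G$ after passing to a further finite-index subgroup.
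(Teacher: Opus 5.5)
The paper gives no proof of this lemma; it simply quotes it from Schneebeli. Your argument is the standard Hochschild--Serre comparison, and its main line is correct. Once $1\to\widehat N\to\widehat E\to\widehat G\to 1$ is exact, the comparison map is an isomorphism on $E_2^{p,q}$ for $p+q\le 2$, an isomorphism on $E_2^{2,1}$, and injective on $E_2^{3,0}$. The usual filtration and five-lemma comparison then gives $H^i(\widehat E,A)\cong H^i(E,A)$ for $i\le 2$. Your argument also shows why the hypothesis on $H^2(N,M)$ in Lemma \ref{Serre2} b) can be dropped when $n=2$. Finiteness of $H^q(N,A)$ is only used for the terms with $p\ge 1$, i.e. for $q\le 1$, and there it is automatic for finitely generated $N$.

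One justification is false and should be replaced. Finiteness of $H^2(N,A)$ does not follow from $\widehat N$ being finitely generated. A finitely generated profinite group can have infinite $H^2$ with finite coefficients: for a finitely generated pro-$p$ group $P$, $\dim_{\mathbb{F}_p}H^2(P,\mathbb{F}_p)$ is the minimal number of relators, and finitely generated pro-$p$ groups need not be finitely presented.

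Fortunately you do not need this. The only term with $q=2$ that enters is $E_2^{0,2}$, which is an $H^0$. For any discrete torsion $\widehat G$-module $M$ one has $M^{\widehat G}=M^{G}$, because every element has an open stabiliser and $G$ is dense in $\widehat G$.

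Note also that the only genuinely new input in degree $3$ is injectivity of $H^3(\widehat G,A^N)\to H^3(G,A^N)$. This does follow from $G\in\mathcal{A}_2$, by Serre's exercise: surjectivity in degrees $\le n$ forces injectivity in degree $n+1$. Nothing about $N$ is needed there. The fallback sketch in your last paragraph is therefore unnecessary, and as written it is too vague to serve as a proof on its own.
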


 \begin{lemma} \cite{Serre} \label{Serre2}  Let $n \geq 2$ be an integer.

a) Let $H$ be a subgroup of finite index in $G$. Then $H \in \mathcal{A}_n$ if and only if 
 $G \in \mathcal{A}_n$;
 
 b)  If $1 \to N \to E \to G \to 1$ is a short exact sequence of groups with $N, G \in \mathcal{A}_n$, $N$ finitely generated, $H^q(N, M)$ finite for all $q \leq n$ and every finite $E$-module $M$ then $E  \in \mathcal{A}_n$.  
\end{lemma}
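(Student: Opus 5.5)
The statement is Lemma \ref{Serre2}, attributed to Serre, so I will follow Serre's spectral-sequence approach.

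For a), let $H$ have finite index in $G$. Replacing $H$ by its normal core, which has finite index in both, reduces to $H$ normal. Then $H$ is closed in the profinite topology of $G$, and the profinite topology of $G$ induces that of $H$. Hence $\widehat{H}$ is an open normal subgroup of $\widehat{G}$ with $\widehat{G}/\widehat{H} \simeq G/H$.

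For the direction "$H \in \mathcal{A}_n \Rightarrow G \in \mathcal{A}_n$", I use Shapiro's lemma on both sides. For a finite $\widehat{G}$-module $A$ we have $H^i(G, \mathrm{CoInd}_H^G A) \simeq H^i(H, A)$, and similarly for $\widehat{G}, \widehat{H}$. Here $\mathrm{CoInd}_H^G A$ is again finite and is the same module as $\mathrm{CoInd}_{\widehat{H}}^{\widehat{G}} A$. The comparison maps commute with these isomorphisms.

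Next I use the standard criterion for $\mathcal{A}_n$: $G \in \mathcal{A}_n$ if and only if every class in $H^i(G, A)$, $1 \le i \le n$, is killed on restriction to some finite-index subgroup. This follows by dimension shifting, embedding $A$ into a coinduced module from a finite-index subgroup acting trivially, which is still finite. With this criterion both directions of a) are immediate, since finite-index subgroups of $H$ are finite-index in $G$ and conversely after intersecting.

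For b), compare the Lyndon--Hochschild--Serre spectral sequences
$$E_2^{pq} = H^p(G, H^q(N, M)) \Rightarrow H^{p+q}(E, M)$$
and
$$\widehat{E}_2^{pq} = H^p(\widehat{G}, H^q(\widehat{N}, M)) \Rightarrow H^{p+q}(\widehat{E}, M).$$
The second is available because, by Proposition \ref{sss} (using $G \in \mathcal{A}_2$, which holds since $n \ge 2$, and $N$ finitely generated), $1 \to \widehat{N} \to \widehat{E} \to \widehat{G} \to 1$ is exact. I also need that $\widehat{N}$ is the closure of $N$ in $\widehat{E}$ with the full profinite topology, which is part of that exactness.

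The canonical maps give a morphism of spectral sequences. Since $H^q(N, M)$ is finite and $N \in \mathcal{A}_n$, we get $H^q(\widehat{N}, M) \simeq H^q(N, M)$ for $q \le n$. This is a finite $\widehat{G}$-module: the $G$-action factors through a finite quotient because it is a finite module, and it extends continuously. Then $G \in \mathcal{A}_n$ gives isomorphisms on $E_2^{pq}$ for $p, q \le n$.

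The main obstacle is the edge of the range, namely getting an isomorphism, not just a map, on $H^n$. The comparison theorem for spectral sequences gives an isomorphism in total degree $\le n$ and an injection in degree $n+1$ if the $E_2$ map is an isomorphism for $p+q \le n$ and injective for $p+q = n+1$. Degree $n+1$ requires injectivity of $H^{n+1}(\widehat{G}, -) \to H^{n+1}(G, -)$ and on $H^{n+1}$ of $N$. Injectivity in degree $n+1$ follows from $\mathcal{A}_n$ by the same dimension-shifting argument used in a). I would verify this carefully, then conclude $E \in \mathcal{A}_n$.
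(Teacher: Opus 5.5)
Your proposal is correct and is essentially the standard argument behind the result of Serre that the paper cites without proof: part a) goes through the criterion that classes in $H^i(G,A)$, $1\le i\le n$, die on finite-index subgroups, and part b) compares the discrete and profinite Lyndon--Hochschild--Serre spectral sequences, where you rightly note that injectivity is also needed on $E_2^{n+1,0}$ and $E_2^{0,n+1}$, and this follows from $\mathcal{A}_n$ for $G$ and for $N$. One small slip: Shapiro's lemma for coinduced modules directly gives $G\in\mathcal{A}_n\Rightarrow H\in\mathcal{A}_n$, not the direction you attach it to, whereas $H\in\mathcal{A}_n\Rightarrow G\in\mathcal{A}_n$ is the direction that is immediate from the criterion (restrict to $H$, then to a finite-index subgroup of $H$).
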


\begin{prop} \cite{L} \label{Lor1}
If $G \in \mathcal{A}_2$ and $1 \to N \to E \to G \to 1$ is a short exact sequence of groups with $N$ finitely generated, both $N$ and $G$ residually finite, then $E$ is residually finite. 
\end{prop}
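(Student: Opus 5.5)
The strategy is to embed $E$ in the profinite group $\widehat{E}$ and detect nontrivial elements there. By Proposition \ref{sss}, since $G \in \mathcal{A}_2$ and $N$ is finitely generated, completion gives a short exact sequence $1 \to \widehat{N} \to \widehat{E} \to \widehat{G} \to 1$. It is compatible with the canonical maps, so we have a commutative diagram whose top row is $1 \to N \to E \to G \to 1$, whose bottom row is the completed sequence, and whose vertical maps are $\iota_N : N \to \widehat{N}$, $\iota_E : E \to \widehat{E}$ and $\iota_G : G \to \widehat{G}$. Residual finiteness of $E$ is equivalent to injectivity of $\iota_E$, so it suffices to prove that $\iota_E$ is injective.

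Let $x \in \ker \iota_E$. Its image in $G$ lies in $\ker \iota_G$, which is trivial because $G$ is residually finite. Hence $x \in N$. Commutativity of the left square gives that $\iota_N(x)$ maps to $\iota_E(x) = 1$ under $\widehat{N} \to \widehat{E}$. This map is injective by exactness of the bottom row, so $\iota_N(x) = 1$. Since $N$ is residually finite, $\iota_N$ is injective, and therefore $x = 1$.

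The one point to check is that the map $\widehat{N} \to \widehat{E}$ in Proposition \ref{sss} is the one induced by the inclusion $N \le E$, so that the left square commutes. This amounts to saying that the topology induced on $N$ from the profinite topology of $E$ is the full profinite topology of $N$. That statement is exactly the content of Serre's result, and it is where the hypotheses $G \in \mathcal{A}_2$ and $N$ finitely generated are used. In concrete terms, for every finite index subgroup $V \le N$ we need a finite index subgroup $U \le E$ with $U \cap N \le V$.

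If one wants this directly, the argument runs as follows. Since $N$ is finitely generated, $V$ contains a characteristic subgroup $V_0$ of finite index in $N$. Then $V_0$ is normal in $E$, and we get an extension $1 \to N/V_0 \to E/V_0 \to G \to 1$ with finite kernel. Because $G \in \mathcal{A}_2$, Serre's argument with $H^2$ and finite modules shows that such an extension becomes split after pulling back to a finite index subgroup of $G$. This yields a finite index subgroup of $E/V_0$ meeting $N/V_0$ trivially, and its preimage $U$ in $E$ satisfies $U \cap N \le V_0 \le V$. Since the paper allows citing Proposition \ref{sss}, I would simply invoke it, and the proof reduces to the short diagram argument in the second paragraph.
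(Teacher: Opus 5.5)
Your proof is correct and follows the route behind the cited result: exactness of the completed sequence from Proposition \ref{sss}, followed by the diagram chase using injectivity of $N \to \widehat{N}$ and $G \to \widehat{G}$. This is the argument of \cite[Cor. 6.2]{G-J-Z}, which the paper remarks needs only $G \in \mathcal{A}_2$. One precision: commutativity of the left square is automatic by functoriality of profinite completion, whereas what Serre's result actually supplies is injectivity of $\widehat{N} \to \widehat{E}$, that is, that $E$ induces the full profinite topology on $N$.
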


A version of the above proposition with $G$ a good group can be found in \cite[Cor. 6.2]{G-J-Z}. Actually though the statement of \cite[Cor. 6.2]{G-J-Z} requires that $G$ is good the proof uses only that  $G \in \mathcal{A}_2$.

 It is known that a central extension of residually finite group is not necessary residually finite, see Deligne's example in \cite{Deligne} or section \ref{Deligne0}.

A discrete group $G$ is {\it (cohomologically) $p$-good in dimension $\leq n$} if for every finite pro-$p$ $\mathbb{Z}_p[[\widehat{G}_p]]$-module $A$ the canonical map $G \to \widehat{G}_p$ induces an isomorphism 
$$H^{i}(\widehat{G}_p, A) \to H^i(G, A) \hbox{ for every } i \leq n.$$
A discrete group $G$ is {\it homologically $p$-good in dimension $\leq n$} if 
for every finite pro-$p$ $\mathbb{Z}_p[[\widehat{G}_p]]$-module $A$ the canonical map $G \to \widehat{G}_p$ induces an isomorphism 
$$ H_i(G, A) \to H_{i}(\widehat{G}_p, A) \hbox{ for every } i \leq n.$$

 \subsection{Structure of $G \otimes G$ and $ G \wedge G$} \label{sect-prem-nu} The group $\nu(G)$ was defined in \cite{Norai}. Here $G^{\varphi}$ is a copy of the group $G$ with an isomorphism $G \to G^{\varphi}$ sending $g$ to $g^{\varphi}$. Then $\nu(G)$ has a presentation
 $$\langle G, G^{\varphi} \ | \ [g_1, g_2^{\varphi}]^{g_3} = [g_1, g_2^{\varphi}]^{g_3^{\varphi}} = [g_1^{g_3}, (g_2^{ g_3})^{\varphi}] \rangle$$
 By \cite[Prop. 2.6]{Norai} the non-abelian tensor square
  $G \otimes G$ is isomorphic to the normal subgroup $[G, G^{\varphi}]$ of $\nu(G)$, with an isomorphism that sends  $g_1 \otimes g_2$ to $[g_1, g_2^{\varphi}]$. 
 
 Let $W_0(G)$ be the kernel of the epimorphism  $$G \otimes G \to G'$$ that sends $g_1 \otimes g_2$ to $[g_1, g_2]$ and $\Delta(G)$ be the subgroup of $G \otimes G$ generated by $\{ g \otimes g ~ | ~ g \in G \}$. By definition  the non-abelian tensor square $G \wedge G$ is the quotient $G \otimes G/ \Delta(G)$ and $g_1 \wedge g_2$ is the image of $g_1 \otimes g_2$ in $G \wedge G$. 
 
 By \cite{B-L1}, \cite{B-L2} $\Delta(G)$ is a quotient of $\Gamma(G/ G')$, where $\Gamma$ denotes the Whitehead quadratic functor. Furthermore  $G \wedge G $ maps surjectively to $G'$, via the map that sends  $g_1 \wedge g_2$ to $[g_1, g_2] = g_1^{-1} g_2^{-1} g_1 g_2$,  with kernel isomorphic to the Schur multiplier $M(G) = H_2(G, \mathbb{Z})$ i.e. there is a short exact sequence of groups
 $$1 \to H_2(G, \mathbb{Z}) \to G \wedge G \to G' \to 1$$ In particular since $\Gamma(G/ G')$ is finitely generated if $G/ G'$ is finitely generated, we see that the group $\Delta(G)$ is finitely generated if $G/ G'$ is finitely generated. By \cite[p.68-69]{Norai} $\Delta(G)$ is  a central subgroup  in $\nu(G)$.

 \begin{theorem} \cite{B-F-M} \label{2tor}
 Let $G$ be a group.

 a)   Suppose $G/ G'$ is finitely generated. Furthermore $G/ G'$ has no
elements of order 2, or $G'$ has a complement in $G$, then $G \otimes G \simeq \Delta(G) \times (G \wedge G)$;

 b) Suppose $G \simeq F / R$, where $F$ is a free group then $G \wedge G \simeq F'/ [F,R]$. 
 \end{theorem}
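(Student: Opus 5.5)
The statement is the theorem of Blyth, Fumagalli and Morigi (Theorem \ref{2tor}). Part b) is the Brown--Loday/Miller description of the exterior square, and part a) is a splitting argument for the central extension (\ref{ses25}). I would prove b) first and then use it, together with the map $\Delta(G) \to \Gamma(G/G')$-type information, for a).

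For b), write $G = F/R$. The plan is to build mutually inverse homomorphisms between $G \wedge G$ and $F'/[F,R]$.

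First, define $\phi : G \wedge G \to F'/[F,R]$ by $\phi(xR \wedge yR) = [x,y][F,R]$.
\begin{itemize}
\item Well-definedness: $[xr,y] \equiv [x,y]$ modulo $[F,R]$ for $r \in R$, because $[xr,y] = [x,y]^r [r,y]$ and $[x,y]^r \equiv [x,y]$ modulo $[F,R]$.
\item Respect for the defining relations: the exterior-square relations $xy \wedge z = (x^y \wedge z^y)(y \wedge z)$, $x \wedge yz = (x \wedge z)(x^z \wedge y^z)$ and $x \wedge x = 1$ become commutator identities in $F$ that hold on the nose.
\end{itemize}

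Second, define $\psi : F'/[F,R] \to G \wedge G$. On the free group $F$, the group $F \wedge F$ is isomorphic to $F'$ via $x \wedge y \mapsto [x,y]$. This is classical for free groups: the commutator map $F\wedge F\to F'$ has kernel $H_2(F,\Z)=0$. Composing the inverse of this isomorphism with the natural map $F \wedge F \to G \wedge G$ gives a map $F' \to G \wedge G$.

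Third, check that this map kills $[F,R]$. A generator $[f,r]$ corresponds to $f \wedge r$, which maps to $\bar f \wedge 1 = 1$. Hence $\psi$ is defined, and $\psi$ and $\phi$ are clearly inverse on generators.

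The main delicate point in b) is the isomorphism $F \wedge F \cong F'$. I would get it from the exact sequence $1 \to H_2(F) \to F\wedge F \to F' \to 1$ quoted in the preliminaries, since $H_2(F,\Z)=0$.

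For a), the plan is to produce a homomorphism $G \otimes G \to \Delta(G)$ that restricts to the identity on $\Delta(G)$. Since $\Delta(G)$ is central, this splits (\ref{ses25}) as a direct product.

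Consider the map $G \otimes G \to G^{ab} \otimes_\Z G^{ab}$ and compose with the quotient to the symmetric square. The element $g\otimes g$ maps to $\bar g\otimes \bar g$, so $\Delta(G)$ maps into the symmetric part $\Gamma$-image. When $G^{ab}$ is finitely generated with no $2$-torsion, $\Gamma(G^{ab}) \to G^{ab}\otimes G^{ab}$, $\gamma(a)\mapsto a\otimes a$, is injective. This can be checked on cyclic summands of odd or infinite order using the decomposition $\Gamma(A\oplus B)=\Gamma(A)\oplus\Gamma(B)\oplus A\otimes B$. Hence $\Gamma(G^{ab}) \to \Delta(G)$ is an isomorphism and $\Delta(G)$ embeds into the abelian group $G^{ab}\otimes G^{ab}$. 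One then needs a retraction of the abelian group onto the image of $\Delta(G)$. Its image consists of the symmetric tensors $\{a\otimes b + b\otimes a,\ a\otimes a\}$. Without $2$-torsion, the finitely generated image is a direct summand after suitably choosing the retraction; where this fails directly, I would map into $\Gamma(G^{ab})$ via the universal quadratic map instead.

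In the complement case $G = G' \rtimes Q$, note that $Q \cong G^{ab}$ and $\Delta(G)$ is generated by $q\otimes q$ for $q\in Q$ together with terms determined by them. Use naturality with the retraction $G \to Q$: $G\otimes G\to Q\otimes Q$, with $Q$ abelian. Composing with $Q\otimes Q\to$ (an appropriate receptacle for $\Gamma(Q)$), and using the inclusion $Q\to G$, shows that $\Delta(Q)\to\Delta(G)$ is surjective and split.

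I expect the main obstacle to be the $2$-torsion bookkeeping in a): showing that $\Delta(G)$ really is a direct summand rather than merely a subgroup.
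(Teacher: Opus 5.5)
The paper does not prove this statement; it quotes it from \cite{B-F-M}. Your argument therefore has to stand on its own.

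\textbf{Part b)} is correct and complete. The map $\phi$ is well defined since $[xr,y]=[x,y]^r[r,y]\equiv[x,y]$ modulo $[F,R]$, and similarly in the second variable. The defining relations of $G\wedge G$ are commutator identities already valid in $F$. The map $\psi$ exists because $F\wedge F\cong F'$ and $f\wedge r\mapsto \bar f\wedge 1=1$. Finally, the two maps are mutually inverse on generators.

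\textbf{Part a)} has a genuine gap, at exactly the point you flag. Two things are right: the reduction to a retraction $G\otimes G\to\Delta(G)$, and the $2$-torsion-free injectivity argument. In the latter, $\Gamma(A)\to A\otimes A$ is injective for $A=G^{ab}$, so $\Gamma(A)\cong\Delta(G)$ embeds in $A\otimes A$. But the decisive step is only asserted: that there is a homomorphism $A\otimes A\to\Delta(G)$ restricting to the identity on $\Delta(G)$. A finitely generated subgroup of a finitely generated abelian group need not be a direct summand (e.g.\ $2\Z\le\Z$). Your fallback also fails. The natural homomorphism $A\otimes A\to\Gamma(A)$, $a\otimes b\mapsto\gamma(a+b)-\gamma(a)-\gamma(b)$, composed with $\Gamma(A)\to A\otimes A$, is multiplication by $2$ on $\Gamma(A)$, not the identity.

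In the complement case, aiming at ``a receptacle for $\Gamma(Q)$'' is the wrong target. Take $G=\Z/2\Z$, where $G'=1$ has complement $G$. Then $\Delta(G)=G\otimes G\cong\Z/2\Z$, whereas $\Gamma(\Z/2\Z)\cong\Z/4\Z$. The correct target is $\Delta(Q)\le Q\otimes Q$. The same example shows that the isomorphism $\Delta(G/G')\simeq\Gamma(G/G')$ in the Remark after the theorem is not valid in the complement case in general.

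\textbf{What fills the gap.} The missing ingredient is a splitting that needs no hypothesis on $2$-torsion.
\begin{itemize}
\item Write $A=\bigoplus_i C_i$ with $C_i$ cyclic, generated by $c_i$. Expanding $a\otimes a$ for $a=\sum_i a_i$ shows that $\langle a\otimes a \mid a\in A\rangle$ equals $\bigoplus_i (C_i\otimes C_i)\oplus\bigoplus_{i<j}\langle c_i\otimes c_j+c_j\otimes c_i\rangle$.
\item The subgroup $\bigoplus_{i<j}C_i\otimes C_j$ is a complement to it in $A\otimes A$. Let $r$ be the corresponding projection of $A\otimes A$ onto this subgroup.
\item The hypotheses of a) are needed only to show that $\pi:G\otimes G\to A\otimes A$ maps $\Delta(G)$ isomorphically onto that subgroup. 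In the first case this follows from injectivity of $\Gamma(A)\to A\otimes A$. In the complement case, $\Delta(G)$ is generated by the $q\otimes q$ with $q\in Q$, and $(p\otimes p)\circ(i\otimes i)=\mathrm{id}_{Q\otimes Q}$ for the inclusion $i:Q\to G$ and the projection $p:G\to Q$.
\item Then $(\pi|_{\Delta(G)})^{-1}\circ r\circ\pi$ is the retraction you need.
\end{itemize}
So the obstacle you anticipated is misplaced. The direct-summand property is automatic for finitely generated $A$; $2$-torsion only threatens the injectivity of $\Delta(G)\to A\otimes A$.
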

 
 {\bf Remark} The proof of  Theorem \ref{2tor}, a) in \cite{B-F-M} shows that under the assumptions of Theorem \ref{2tor}, a) $\Delta(G) \simeq \Delta(G/ G') \simeq \Gamma(G/ G')$.

\medskip
    The non-abelian  $q$-exterior square $G \wedge^q G$ is defined in  \cite{C-R}
and by \cite{E-RF}  there is a  central extension \begin{equation} \label{ellis1}  1 \to H_2(G, \mathbb{F}_q) \to G \wedge^q G \to [G,G]G^q \to 1 \end{equation}
where $\mathbb{F}_q = \mathbb{Z}/ q \mathbb{Z}$. Thus for $q = 1$ we have $G \wedge^1 G \simeq G$ and for $q = 0$ we have $G \wedge^0 G \simeq G \wedge G$. 

\begin{lemma} \cite{Ellis2} \label{Wh-Wh}  Let $\Gamma$ be the Whitehead quadratic functor. Then there is an exact sequence
$$ \Gamma(G / [G,G] G^q) \to G \otimes^q G \to  G \wedge^q G \to 1$$
\end{lemma}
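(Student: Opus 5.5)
The plan is to build the left-hand map explicitly from the Whitehead universal property and then identify its image with the kernel of $G \otimes^q G \to G \wedge^q G$. Recall that, as for $q=0$, the group $G \wedge^q G$ is by definition the quotient of $G \otimes^q G$ (generated by the symbols $g \otimes h$ and $\{k\}$, $g,h,k \in G$, subject to the Conduché--Rodríguez-Fernández relations) by the subgroup $\Delta^q(G)$ generated by the elements $g \otimes g$, $g \in G$. Exactness at $G \wedge^q G$ is therefore automatic. What remains is to produce a homomorphism $\Gamma(G/[G,G]G^q) \to G \otimes^q G$ whose image is exactly $\Delta^q(G)$, and to check that this subgroup is normal.

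\textbf{Step 1.} Define $\psi : G \to G \otimes^q G$ by $\psi(g) = g \otimes g$. Using the defining relations (the analogues of $gg' \otimes h = (g \otimes h)^{g'}(g'\otimes h)$ and its mirror), expand $(gh) \otimes (gh)$. The goal is the identity
$$\psi(gh) = \psi(g)\,\psi(h)\,(g \otimes h)(h \otimes g),$$
as in the classical case $q=0$ treated by Brown--Loday.

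\textbf{Step 2.} Establish the properties of the cross term $\langle g,h\rangle := (g\otimes h)(h \otimes g)$. Following Brown--Loday, I would show:
\begin{itemize}
\item the elements $g \otimes g$ and $\langle g,h\rangle$ are central in $G \otimes^q G$;
\item $\langle g, h\rangle$ is symmetric and bimultiplicative;
\item $\langle g,h \rangle$ is trivial whenever $g \in [G,G]$.
\end{itemize}
These give the centrality of $\Delta^q(G)$, hence its normality. The extra ingredient for $q \geq 1$ is that $\langle g, h\rangle$, and also $\psi$ itself, is unchanged when $g$ is multiplied by a $q$-th power $k^q$. For this I would use the relations in $G \otimes^q G$ linking $\{k\}$ with $k^q \otimes h$ (of the form $k^q \otimes h = \{k\}^{-1}\{k^h\}\cdot(\ldots)$ together with $[\{k\}, g \otimes h] = k^q \otimes [g,h]$), combined with centrality. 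The aim is to show that $k^q \otimes h \cdot h \otimes k^q$ collapses to the identity.

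\textbf{Step 3.} Steps 1 and 2 show that $\psi$ factors through a map $\bar\psi : A = G/[G,G]G^q \to Z(G \otimes^q G)$. This map satisfies $\bar\psi(-a) = \bar\psi(a)$, and $(a,b) \mapsto \bar\psi(a+b)\bar\psi(a)^{-1}\bar\psi(b)^{-1}$ is bilinear, so $\bar\psi$ is a quadratic map into an abelian group. By Whitehead's universal property of $\Gamma$, there is a unique homomorphism $\Gamma(A) \to G \otimes^q G$ with $\gamma(a) \mapsto \bar\psi(a)$. Its image is generated by the elements $g \otimes g$, i.e. it equals $\Delta^q(G)$, which gives exactness in the middle.

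The main obstacle I expect is the part of Step 2 showing that $g \otimes g$ and the cross terms are insensitive to multiplication by $q$-th powers. This is the only place where the symbols $\{k\}$ and the specific $q$-relations enter. The $q=0$ identities are standard, but here one must carefully combine the relation $\{k\}$ versus $k^q \otimes h$ with centrality to kill the terms $k^q \otimes h \cdot h \otimes k^q$ and to get $(gk^q)\otimes(gk^q) = g\otimes g$.
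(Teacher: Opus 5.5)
The paper gives no argument for this lemma beyond the citation of Ellis, so the only question is whether your outline closes. Its architecture is the right one: $\psi(g)=g\otimes g$ is central and quadratic and factors through $A=G/[G,G]G^q$, and Whitehead's universal property then yields $\Gamma(A)\to G\otimes^q G$ with image $\Delta^q(G)$.

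Most of Steps 1--2 comes for free. The defining relations of $G\otimes G$ hold among the symbols $g\otimes h$ in $G\otimes^q G$, so there is a canonical homomorphism $G\otimes G\to G\otimes^q G$. Hence every Brown--Loday identity transfers: quadraticity of $\psi$, bilinearity and symmetry of $\langle g,h\rangle=(g\otimes h)(h\otimes g)$, and dependence only on classes in $G^{ab}$. Commutation with the generators $\{k\}$ then follows from $\{k\}(g\otimes h)\{k\}^{-1}={}^{k^q}g\otimes {}^{k^q}h$, since these elements depend only on classes in $G^{ab}$.

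The gap is the one genuinely new step, which you leave as an expected obstacle, and the relations you propose for it do not deliver it. The relation $[\{k\},g\otimes h]=k^q\otimes[g,h]$ only concerns tensors with an entry in $[G,G]$. The formula for $k^q\otimes h$ you allude to carries unspecified extra terms, and that is exactly where the content lies. The relation that does the work is the defining relation $\{[g,h]\}=(g\otimes h)^q$, with conventions ${}^xy=xyx^{-1}$ and $[x,y]=xyx^{-1}y^{-1}$.

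Here is how it closes the step.
\begin{itemize}
\item Taking $h=g$ gives $(g\otimes g)^q=\{1\}=1$, so $\psi(k^q)=\psi(k)^{q^2}=1$. Alternatively, $k^q\otimes k^q=[\{k\},\{k\}]=1$ directly.
\item For the cross term, bilinearity gives $\langle g,k^q\rangle=\langle g,k\rangle^q$. Since $\langle g,k\rangle$ is central, $g\otimes k$ and $k\otimes g$ commute, whence
\[
\langle g,k\rangle^q=(g\otimes k)^q(k\otimes g)^q=\{[g,k]\}\,\{[k,g]\}.
\]
\item Finally, $(g\otimes k)^{-1}={}^kg\otimes k^{-1}$ and $[{}^kg,k^{-1}]=[k,g]$ give $\{[k,g]\}=(g\otimes k)^{-q}=\{[g,k]\}^{-1}$. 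So $\langle g,k^q\rangle=1$, and therefore $\psi(gk^q)=\psi(g)\psi(k^q)\langle g,k^q\rangle=\psi(g)$.
\end{itemize}
Inserting this into Step 2 makes your proof complete.
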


By Lemma \ref{Wh-Wh} 
 if $G$ is $q$-perfect, i.e. $G = [G,G]G^q$,  then $G \otimes^q G \simeq  G \wedge^q G$.
 \subsection{Known conditions for residual finiteness}

We denote by $\widehat{G}_p$ the  pro-$p$ completion of a discrete group $G$. Denote by $\mathbb{Z}_p$ the  pro-$p$ completion of $\mathbb{Z}$.  
 
\medskip

\begin{theorem}  \cite{A-B-K} \label{old1} Let $G$ be a discrete group.

   a1)  Suppose that $G$ is residually finite and the  map $$H_2(G, \mathbb{Z}) \to H_2(\widehat{G}, \widehat{\mathbb{Z}}),$$ induced by the canonical map  $G \to \widehat{G}$, is injective. Then $G \wedge G$ is residually finite.
 
 a2)  Suppose that $G$ is residually finite and the  map $H_2(G, \Z) \to \varprojlim_U H_2(G/U, \Z)$, induced by the canonical projections $G \to G/ U$ (for $ \widehat{G} \simeq \varprojlim G/U$, $G/ U$ finite groups) is injective. Then $G \wedge G$ is residually finite.

 b) 
  Suppose that $G$ is a finitely generated residually finite group such that $G \wedge G$ is finitely generated. Then $G \wedge G$ is residually finite if and only if the map  $$H_2(G, \mathbb{Z}) \to H_2(\widehat{G}, \widehat{\mathbb{Z}}),$$ induced by the canonical map  $G \to \widehat{G}$, is injective.
 
     c1) Suppose that $G$ is residually $p$-finite and the map $H_2(G, \Z) \to \varprojlim_U H_2(G/U, \Z)$, induced by the canonical projections $G \to G/ U$ (for $ \widehat{G}_p \simeq \varprojlim G/U$, $G/ U$ $p$-finite groups) is injective then $G \wedge G$ is residually $p$-finite.
 
 c2)  Suppose that $G$ is residually $p$-finite and    the canonical map $H_2(G, \mathbb{Z}) \to H_2(\widehat{G}_p, {\mathbb{Z}}_p)$, induced by the canonical map  $G \to \widehat{G}_p$, is injective then $G \wedge G$ is residually $p$-finite.

 d) If $G/ G'$ is finitely generated and   either has no 2-torsion or  $G'$ has a complement in $G$  then $G \otimes G$ is residually finite if and only if $G \wedge G$ is residually finite.
 
 e)   If $G/ G'$ is finitely generated and either has no 2-torsion or  $G'$ has a complement in $G$  then $G \otimes G$ is residually $p$-finite if and only if $G \wedge G$ is residually $p$-finite and the torsion subgroup of $\Delta(G)$ is $p$-finite. In particular  $G \otimes G$ is residually $p$-finite for every prime $p$ if and only if $\Delta(G)$ is torsion-free and $G \wedge G$ is residually $p$-finite for every prime $p$.

\end{theorem}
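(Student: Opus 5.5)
The plan is to work with the central extension
$$1 \to H_2(G,\Z) \to G\wedge G \to G' \to 1$$
of Section \ref{sect-prem-nu}, together with naturality of $-\wedge-$ with respect to the quotient maps $G \to G/U$.

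For a2, take $1 \neq x \in G\wedge G$. If the image of $x$ in $G' \subseteq G$ is nontrivial, residual finiteness of $G$ gives a finite quotient of $G$, and hence of $G \wedge G$ via the homomorphism $G\wedge G \to G' \hookrightarrow G$, in which $x$ survives. Otherwise $x \in H_2(G,\Z)$. Injectivity of $\alpha$ means that $x$ has nonzero image in $H_2(G/U,\Z)$ for some $U$. The natural map $G\wedge G \to (G/U)\wedge(G/U)$ restricts on the kernels to $H_2(G,\Z) \to H_2(G/U,\Z)$, so $x$ survives in $(G/U)\wedge (G/U)$. This group is finite by Ellis \cite{Ellis}; alternatively, it is an extension of the finite group $H_2(G/U,\Z)$ by $(G/U)'$.

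Part a1 reduces to a2. Indeed, $H_2(\widehat G,\widehat\Z) = \varprojlim_U H_2(G/U,\widehat\Z) = \varprojlim_U H_2(G/U,\Z)$, because the positive-degree homology of a finite group is finite. Hence the map in a1 factors through $\alpha$, and injectivity of the composite forces injectivity of $\alpha$. Parts c1 and c2 follow by the same argument with $U$ ranging over normal subgroups of $p$-power index. Here $(G/U)\wedge(G/U)$ is a finite $p$-group, being an extension of the $p$-group $H_2(G/U,\Z)$ by $(G/U)'$. Moreover $H_2(\widehat G_p,\Z_p) = \varprojlim_U H_2(G/U,\Z)$, since $H_2(G/U,\Z)\otimes \Z_p = H_2(G/U,\Z)$ for a finite $p$-group $G/U$.

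For b, only the forward implication needs proof, the converse being a1. Write $G = F/R$ with $F$ free of finite rank. By Theorem \ref{2tor} b), $G\wedge G \simeq F'/[F,R]$, and Hopf's formula gives $H_2(G,\Z) \simeq (R\cap F')/[F,R]$. Let $x = r[F,R] \neq 1$ with $r \in R\cap F'$. By residual finiteness there is a finite-index subgroup of $G\wedge G$ avoiding $x$. Since $G\wedge G$ is finitely generated, it has only finitely many subgroups of any given index, so intersecting the images of that subgroup under the conjugation action of $F$ gives an $F$-invariant finite-index subgroup still avoiding $x$. Its preimage is a normal subgroup $S \trianglelefteq F$ with $[F,R] \subseteq S \subseteq F'$, $|F':S| < \infty$, and $r \notin S$.

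The group $F/S$ is finitely generated with finite derived subgroup, so its centre has finite index. I will choose $T \trianglelefteq F$ of finite index with $T/S$ inside that centre and $T \cap F' \subseteq S$. This is possible because $F/S$ is finite-by-(f.g. abelian), hence residually finite, and $F'/S$ is finite. Then $[F,T] \subseteq S$. Put $Q = F/RT$, a finite quotient of $G$. By Hopf, $H_2(Q,\Z) = (RT\cap F')/[F,RT]$, and $[F,RT] \subseteq [F,R][F,T] \subseteq S$. Thus $r \notin [F,RT]$, so $x$ has nonzero image in $H_2(Q,\Z)$, hence in $\varprojlim_U H_2(G/U,\Z) = H_2(\widehat G,\widehat \Z)$.

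I expect the main obstacle to be this step: producing a finite quotient $Q$ with $[F,RT]$ inside $S$. It is handled by the finite-index-centre property of finitely generated finite-by-abelian groups.

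For d and e, Theorem \ref{2tor} a) and the Remark following it give $G\otimes G \simeq \Gamma(G/G') \times (G\wedge G)$, where $\Gamma(G/G')$ is a finitely generated abelian group. A direct product is residually finite (respectively residually $p$-finite) if and only if both factors are. Every finitely generated abelian group is residually finite, and it is residually $p$-finite exactly when its torsion subgroup is a $p$-group. This gives d and both statements of e.
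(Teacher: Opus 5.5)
Your proposal is correct. The paper does not prove this statement; it quotes it from \cite{A-B-K}. The only traces of the argument in the paper agree with your route. The introduction remarks that $H_2(G,\Z)\to H_2(\widehat G,\widehat\Z)$ factors through $\alpha\colon H_2(G,\Z)\to\varprojlim_U H_2(G/U,\Z)$, and that injectivity of $\alpha$ together with residual finiteness of $G$ suffices; that is exactly your reduction of a1) to a2), and likewise of c2) to c1). The later uses of d) and e), for instance in the RAAG corollary, go through $G\otimes G\simeq\Delta(G)\times(G\wedge G)$ with $\Delta(G)$ finitely generated abelian, which is your argument for d) and e).

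Two small remarks. In a1) the identification $H_2(\widehat G,\widehat\Z)\cong\varprojlim_U H_2(G/U,\Z)$ does not follow just from finiteness of $H_2(G/U,\Z)$. It rests on two facts:
\begin{itemize}
\item profinite homology commutes with inverse limits;
\item $H_2(G/U,\widehat\Z)\cong H_2(G/U,\Z)\otimes\widehat\Z\cong H_2(G/U,\Z)$, by universal coefficients with $\widehat\Z$ torsion-free and $H_2(G/U,\Z)$ finite.
\end{itemize}
Your conclusion is nevertheless right.

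The paper gives no argument for the forward direction of b). Your Hopf-formula argument is a valid self-contained proof: it produces a finite quotient $Q=F/RT$ with $[F,RT]\subseteq S$ from the finite-index centre of the finitely generated finite-by-abelian group $F/S$. You can simplify it by taking $T$ to be the full preimage of $Z(F/S)$ in $F$. This $T$ is already normal of finite index and satisfies $[F,T]\subseteq S$. The extra condition $T\cap F'\subseteq S$, and the appeal to residual finiteness of $F/S$, are never used afterwards.
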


\begin{theorem} \cite{A-B-K} \label{equi} 
  If 
  $H_2(G, \Z/ m \Z) \to H_2( \widehat{G}, \Z/m \Z)$ is an isomophism for every integer $m \geq  2 $, where $\Z/ m \Z$ is the trivial $G$-module
 and $H_2(G, \mathbb{Z})$ is finitely generated then  $H_2(G, \mathbb{Z}) \to H_2(\widehat{G}, \widehat{\mathbb{Z}})$ is injective. In particular,  if      
  $G \in \mathcal{A}_2^{hom}$,  $H_2(G, \mathbb{Z})$ is finitely generated  and $G$ is residually finite then $G \wedge G$ is residually finite.
\end{theorem}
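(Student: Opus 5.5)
The plan is to detect every nonzero class of $H_2(G,\mathbb{Z})$ after reducing coefficients modulo a suitable integer $m$, and then transport it to $\widehat{G}$ using the assumed isomorphism with $\mathbb{Z}/m\mathbb{Z}$ coefficients.

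First, fix $0 \neq x \in H_2(G,\mathbb{Z})$. Since $H_2(G,\mathbb{Z})$ is a finitely generated abelian group, it is residually finite. Hence there is an integer $m \geq 2$ with $x \notin m H_2(G,\mathbb{Z})$, so $x$ has nonzero image in $H_2(G,\mathbb{Z}) \otimes \mathbb{Z}/m\mathbb{Z}$. The universal coefficient sequence
$$0 \to H_2(G,\mathbb{Z}) \otimes \mathbb{Z}/m\mathbb{Z} \to H_2(G,\mathbb{Z}/m\mathbb{Z}) \to \mathrm{Tor}(H_1(G,\mathbb{Z}),\mathbb{Z}/m\mathbb{Z}) \to 0$$
then shows that the image of $x$ under the coefficient map $H_2(G,\mathbb{Z}) \to H_2(G,\mathbb{Z}/m\mathbb{Z})$ is nonzero.

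Next, I would use naturality in both variables. The canonical map $G \to \widehat{G}$ and the coefficient maps $\mathbb{Z} \to \widehat{\mathbb{Z}} \to \mathbb{Z}/m\mathbb{Z}$ give a commutative square
$$\begin{array}{ccc} H_2(G,\mathbb{Z}) & \longrightarrow & H_2(\widehat{G},\widehat{\mathbb{Z}}) \\ \downarrow & & \downarrow \\ H_2(G,\mathbb{Z}/m\mathbb{Z}) & \longrightarrow & H_2(\widehat{G},\mathbb{Z}/m\mathbb{Z}). \end{array}$$
The bottom arrow is an isomorphism by hypothesis. Going down then across therefore sends $x$ to a nonzero element, so going across then down does as well. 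In particular the image of $x$ in $H_2(\widehat{G},\widehat{\mathbb{Z}})$ is nonzero, and since $x$ was arbitrary the map is injective.

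The step I expect to need the most care is making the square precise. The homology of $\widehat{G}$ is continuous (profinite) homology, and the comparison map $H_2(G,\mathbb{Z}) \to H_2(\widehat{G},\widehat{\mathbb{Z}})$ is defined through $H_2(G,\widehat{\mathbb{Z}})$, or through the description $H_2(\widehat{G},\widehat{\mathbb{Z}}) \simeq \varprojlim_{U,k} H_2(G/U,\mathbb{Z}/k\mathbb{Z})$. I would check commutativity at the level of this inverse-limit description, where every map is a map of discrete groups induced by the quotients $G \to G/U$ and $\mathbb{Z} \to \mathbb{Z}/k\mathbb{Z}$. There naturality is just functoriality of ordinary group homology, and the right-hand vertical map is the projection to the $k=m$ component.

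For the final assertion, $G \in \mathcal{A}_2^{hom}$ gives in particular that $H_2(G,\mathbb{Z}/m\mathbb{Z}) \to H_2(\widehat{G},\mathbb{Z}/m\mathbb{Z})$ is an isomorphism for every trivial finite module $\mathbb{Z}/m\mathbb{Z}$. The first part then yields injectivity of $H_2(G,\mathbb{Z}) \to H_2(\widehat{G},\widehat{\mathbb{Z}})$. Combined with the residual finiteness of $G$, Theorem \ref{old1} a1) gives that $G \wedge G$ is residually finite.
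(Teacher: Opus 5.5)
Your proposal is correct and is essentially the paper's own argument (given as the proof of Lemma \ref{inj} a), which generalises this statement). The only difference is that you work element by element: the paper applies $\varprojlim_n$ to the universal-coefficient injections and uses $\bigcap_n nH_2(G,\Z)=0$, while you detect each nonzero class at a single level $m$ via the naturality square, which only needs injectivity of $H_2(G,\Z/m\Z)\to H_2(\widehat{G},\Z/m\Z)$.
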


 \begin{prop} \cite{A-B-K} \label{iff25} Let $q \geq 2$ be an integer.
  Suppose that $G$ is a finitely generated residually finite group . Then $G \wedge^q G$ is residually finite if and only if the map $H_2(G, \mathbb{F}_q) \to H_2(\widehat{G}, \mathbb{F}_q)$, induced by the canonical map  $G \to \widehat{G}$, is injective.
  \end{prop}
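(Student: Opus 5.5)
The plan is to compare $G \wedge^q G$ with the $q$-exterior squares of the finite quotients of $G$. The comparison uses the central extension (\ref{ellis1}) together with a Hopf-type description of $G \wedge^q G$. First I record the standing facts.

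Since $G$ is finitely generated, $G/[G,G]G^q$ is a finitely generated abelian group of exponent $q$, hence finite. So $[G,G]G^q$ has finite index in $G$; it is therefore finitely generated and residually finite. Moreover, $H_2(\widehat G,\mathbb{F}_q)\simeq \varprojlim_U H_2(G/U,\mathbb{F}_q)$, because profinite homology with finite coefficients commutes with inverse limits. Hence the map in the statement is injective if and only if every nonzero $c\in H_2(G,\mathbb{F}_q)$ survives in $H_2(G/U,\mathbb{F}_q)$ for some normal subgroup $U$ of finite index in $G$.

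I will work with a presentation $G=F/R$, where $F$ is free of finite rank. The analogue of Theorem \ref{2tor} b) gives
$$G\wedge^q G\simeq [F,F]F^q/[F,R]R^q, \qquad H_2(G,\mathbb{F}_q)\simeq (R\cap [F,F]F^q)/[F,R]R^q,$$
and (\ref{ellis1}) is the obvious short exact sequence. For a finite quotient $G/U=F/S$ with $R\subseteq S$, the map induced on these quotients is the natural projection. The same formulas for the finite group $G/U$ show that $G/U\wedge^q G/U$ is finite, since it is a quotient of the finite group $[F,F]F^q/[F,S]S^q$ by a subgroup of finite index.

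($\Leftarrow$) Take $1\neq x\in G\wedge^q G$. If $x\notin H_2(G,\mathbb{F}_q)$, then its image in $[G,G]G^q$ is nontrivial. That group is residually finite, so some finite quotient of it, and hence of $G\wedge^q G$, detects $x$. If $x=c\in H_2(G,\mathbb{F}_q)$ is nonzero, injectivity gives a $U$ with $c\mapsto c_U\neq 0$ in $H_2(G/U,\mathbb{F}_q)\subseteq G/U\wedge^q G/U$. The homomorphism $G\wedge^q G\to G/U\wedge^qG/U$ has finite target and does not kill $c$.

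($\Rightarrow$) This is the direction I expect to be the main step. Let $0\neq c\in H_2(G,\mathbb{F}_q)$ and let $K$ be a normal subgroup of finite index in $N=[F,F]F^q/[F,R]R^q$ with $c\notin K$.
\begin{itemize}
\item The preimage $\tilde K$ of $K$ in $F$ has finite index in $[F,F]F^q$, hence in $F$. Let $L\subseteq \tilde K$ be its normal core in $F$; then $L$ is normal of finite index in $F$.
\item Put $S=RL$ and $U=S/R$, a normal subgroup of finite index in $G$.
\item Since $L$ is normal, $[F,RL]\subseteq [F,R]L$, and $(rl)^q\equiv r^q \pmod L$. Therefore $[F,S]S^q\subseteq [F,R]R^q L$.
\item The image of $[F,R]R^qL$ in $N$ lies in $K$, because $L\subseteq\tilde K$.
\item Hence the kernel of $H_2(G,\mathbb{F}_q)\to H_2(G/U,\mathbb{F}_q)$, which is $\big((R\cap[F,F]F^q)\cap [F,S]S^q\big)/[F,R]R^q$, is contained in $K$.
\end{itemize}
It follows that $c$ survives in $H_2(G/U,\mathbb{F}_q)$, which proves injectivity.

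The delicate points are the Hopf-type formula for $G\wedge^q G$ (taken from \cite{C-R}, \cite{E-RF}) and the containment $[F,S]S^q\subseteq[F,R]R^qL$.
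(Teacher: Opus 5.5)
The paper gives no proof of this proposition; it is quoted from \cite{A-B-K}, so there is no argument in the text to compare yours against. Judged on its own, your proof is correct. Both directions work as written, granted one input: the $q$-Hopf description $G\wedge^q G\simeq [F,F]F^q/[F,R]R^q$, with kernel $(R\cap[F,F]F^q)/[F,R]R^q\simeq H_2(G,\mathbb{F}_q)$. You also need its naturality for morphisms of presentations $F/R\to F/S$ with $R\subseteq S$. That naturality is exactly what identifies $G\wedge^qG\to (G/U)\wedge^q(G/U)$, and its restriction $H_2(G,\mathbb{F}_q)\to H_2(G/U,\mathbb{F}_q)$, with the obvious projections. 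This is the one ingredient you should pin down with a precise reference rather than ``the analogue of Theorem \ref{2tor} b)''.

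The key containment $[F,S]S^q\subseteq [F,R]R^qL$ is fine. Indeed $[f,rl]=[f,l][f,r]^l\in [f,r]L$ and $(rl)^q\in r^qL$, because $L\trianglelefteq F$. Your kernel computation then shows that $c$ survives in $H_2(G/U,\mathbb{F}_q)$. Together with $H_2(\widehat G,\mathbb{F}_q)\simeq\varprojlim_U H_2(G/U,\mathbb{F}_q)$, this gives injectivity.

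One sentence needs repair. You justify finiteness of $(G/U)\wedge^q(G/U)$ by calling $[F,F]F^q/[F,S]S^q$ ``the finite group'', which assumes what you want to prove. In fact $(G/U)\wedge^q(G/U)$ \emph{is} $[F,F]F^q/[F,S]S^q$. It is finite because $[F,S]S^q\supseteq [S,S]S^q$, which has finite index in the finitely generated free group $S$. Alternatively, it is an extension of the finite group $[G/U,G/U](G/U)^q$ by the finite group $H_2(G/U,\mathbb{F}_q)$.

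Your route also makes the role of the hypotheses transparent. Since $[F,F]F^q$ has finite index in $F$, the group $G\wedge^qG$ is automatically finitely generated when $G$ is. This explains why no finiteness assumption on $G\wedge^qG$ appears here, unlike the integral statement Theorem \ref{old1} b). Moreover, your ($\Rightarrow$) direction uses only finite generation of $G$, not its residual finiteness; the latter enters only in ($\Leftarrow$), to separate elements outside $H_2(G,\mathbb{F}_q)$ through $[G,G]G^q\leq G$.
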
 Note that by Theorem A $\mathcal{A}_2^{hom} = \mathcal{A}_2$. This together with Proposition \ref{iff25} implies the following corollary.
    
\begin{cor} \label{CorNew} 
Suppose that $G$ is a residually finite group such that $G \in \mathcal{A}_2$ and $q \geq 2$ is an integer.
Then $G \wedge^q G$ is residually finite.
\end{cor}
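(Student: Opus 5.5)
The plan is to reduce Corollary \ref{CorNew} to Proposition \ref{iff25} by checking that the map $H_2(G, \mathbb{F}_q) \to H_2(\widehat{G}, \mathbb{F}_q)$ is injective. By Theorem A we have $\mathcal{A}_2 = \mathcal{A}_2^{hom}$, so $G \in \mathcal{A}_2^{hom}$. Here $\mathbb{F}_q = \Z/q\Z$ with trivial action is a finite discrete $\widehat{G}$-module, since the trivial action factors through $\widehat{G}$. The definition of $\mathcal{A}_2^{hom}$ then gives that $H_2(G, \Z/q\Z) \to H_2(\widehat{G}, \Z/q\Z)$ is an isomorphism, and in particular injective.

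If $G$ is finitely generated, Proposition \ref{iff25} applies directly, since $G$ is residually finite by hypothesis, and we conclude that $G \wedge^q G$ is residually finite.

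The main obstacle is that Proposition \ref{iff25} is stated for finitely generated groups, while the corollary does not assume this. I would first read the corollary as implicitly carrying the standing hypothesis of Proposition \ref{iff25}. If it must hold in general, I would revisit the sufficiency direction of \cite{A-B-K}, which should not need finite generation. That argument uses the central extension (\ref{ellis1}),
$$1 \to H_2(G, \mathbb{F}_q) \to G \wedge^q G \to [G,G]G^q \to 1.$$

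The quotient $[G,G]G^q$ is a subgroup of the residually finite group $G$, so it is residually finite. It therefore suffices to separate nontrivial elements of the central subgroup $H_2(G,\mathbb{F}_q)$ by finite quotients of $G \wedge^q G$. I would obtain these from the naturality of $\wedge^q$. For a finite quotient $G/U$, the group $G/U \wedge^q G/U$ is finite, because it is an extension of the finite group $H_2(G/U,\mathbb{F}_q)$ by a subgroup of $G/U$. Injectivity of the map to $H_2(\widehat{G},\mathbb{F}_q) = \varprojlim_U H_2(G/U,\mathbb{F}_q)$ means each nonzero class survives in some $H_2(G/U,\mathbb{F}_q)$. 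That class then has nontrivial image under the induced map $G\wedge^q G \to G/U \wedge^q G/U$ to a finite group. Together with the residual finiteness of $[G,G]G^q$, this gives residual finiteness of $G \wedge^q G$.
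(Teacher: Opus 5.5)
Your proposal is correct, and its main line is exactly the paper's proof: Theorem A puts $G$ in $\mathcal{A}_2^{hom}$, so $H_2(G,\mathbb{F}_q)\to H_2(\widehat{G},\mathbb{F}_q)$ is an isomorphism, and Proposition \ref{iff25} then gives residual finiteness of $G\wedge^q G$.

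Your supplementary argument for non-finitely generated $G$ is also sound. It uses only three facts:
\begin{itemize}
\item the naturality of the central extension (\ref{ellis1});
\item the finiteness of $G/U\wedge^q G/U$;
\item the identification $H_2(\widehat{G},\mathbb{F}_q)\cong\varprojlim_U H_2(G/U,\mathbb{F}_q)$, which holds because profinite homology with finite coefficients commutes with inverse limits.
\end{itemize}
This argument closes a point the paper passes over: Corollary \ref{CorNew} as stated omits the finite generation hypothesis that Proposition \ref{iff25} requires.
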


\subsection{The fundamental group of a graph of groups}  Let  $X$ be a finite graph with possible loops
 and $(\mathcal{G}, X) $ be a graph of groups, where $\mathcal{G} = \{ G_v, G_e \ | \ v \in V(X), e \in E(X) \}$ is a collection of groups attached to each vertex and edge of $X$ as described below. Let $A$ be an orientation of the set of edges $E(X)$ of $X$,   $o(e)$ and $\tau(e)$ denote the beginning and the end of the edge $e \in A$. For every vertex $v$ of the set of vertices $V(X)$ of  $X$, there is a group $G_v$ , for every edge $e \in A$ there is a
subgroup $G_e$ of $G_{o(e)}$ and an isomorphism $$\phi_e: G_e \to \phi_e(G_e) \leq G_{\tau(e)}$$

Let $T$ be a maximal forest of $X$. By definition the fundamental group $G$ of the graph of groups  $(\mathcal{G}, X) $ is denoted by
$\pi(\mathcal{G}, X) $ and is defined by a presentation (in terms of generators and relations) by
$$
 G = \pi(\mathcal{G}, X) =\langle \{ G_v \}_{v \in V(X)}, \{ t_e \}_{e \in A} \ | \  rel(G_v),$$ $$ t_e g_e t_e^{-1} = \phi_e(g_e) \hbox{ for } g_e \in G_e, t_e = 1, e \in A \cap E(T) \rangle$$

\section{Homological and cohomological goodness coincide} \label{goodness}

  Set $\mathbb{F}_p = \Z / p \Z$. The following is a corollary of \cite[Lemma 3.1]{G-J-Z}.

\begin{lemma} \label{Pavel} Let $G$ be a discrete group, $\mathcal{T}$ be a set of normal subgroups of finite index in $G$ such that $\mathcal{T}$ defines the profinite topology of $G$ and  for every prime integer $p$ and $U \in \mathcal{T}$ we have that the map $H^i ( \widehat{U}, \mathbb{F}_p) \to H^i(U, \mathbb{F}_p)$ is an isomorphism for all $ i \leq n$, where $\mathbb{F}_p$ is the trivial $U$-module. Then $G \in {\mathcal{A}}_n$.
\end{lemma}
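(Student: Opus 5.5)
We reduce the statement to \cite[Lemma 3.1]{G-J-Z}, which (in the form we use) says the following. Let $G$ be a discrete group and $\mathcal{T}$ a family of finite index normal subgroups defining the profinite topology. Suppose that for every $U \in \mathcal{T}$, every prime $p$ and every $i \leq n$, each class in $H^i(U, \mathbb{F}_p)$ with $1 \le i \le n$ vanishes on restriction to some smaller $V \in \mathcal{T}$, $V \le U$. Then $G \in \mathcal{A}_n$. Our task is to check this ``effaceability'' condition from the hypothesis that $H^i(\widehat{U}, \mathbb{F}_p) \to H^i(U, \mathbb{F}_p)$ is an isomorphism for $i \le n$.

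The first step is to identify the profinite side as a direct limit. Since $\mathcal{T}$ defines the profinite topology of $G$, the closures $\overline{V}$ for $V \in \mathcal{T}$, $V \le U$, form a basis of open normal subgroups of $\widehat{U}$, and $\widehat{U}/\overline{V} \cong U/V$. Continuous cohomology with finite coefficients commutes with inverse limits of groups, so
$$H^i(\widehat{U}, \mathbb{F}_p) \cong \varinjlim_{V} H^i(U/V, \mathbb{F}_p).$$
Under this identification, the map to $H^i(U,\mathbb{F}_p)$ is the limit of the inflation maps.

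Surjectivity of this map then says that every class $x \in H^i(U, \mathbb{F}_p)$ is inflated from some finite quotient $U/V$. Inflating further to $U/W$ and restricting to $W$ kills it, since $W$ acts trivially. Explicitly, the composite
$$H^i(U/W, \mathbb{F}_p) \to H^i(U, \mathbb{F}_p) \to H^i(W, \mathbb{F}_p)$$
factors through $H^i(W/W, \mathbb{F}_p) = 0$ for $i \ge 1$. Hence $\mathrm{res}^U_W(x) = 0$ with $W = V \in \mathcal{T}$, which is exactly the condition of \cite[Lemma 3.1]{G-J-Z}. Applying that lemma gives $G \in \mathcal{A}_n$.

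The main obstacle is pinning down the exact hypothesis of \cite[Lemma 3.1]{G-J-Z}. If it is stated with trivial $\mathbb{F}_p$ coefficients for a cofinal family of normal subgroups, the argument above suffices. If instead it requires arbitrary finite $U$-modules, we reduce to the stated case. First pass to a finite index subgroup $U' \in \mathcal{T}$ acting trivially on the module $M$. Then $M$ has a composition series with trivial $\mathbb{F}_p$ factors, and a dévissage using the long exact sequence and the five lemma on both sides carries the isomorphisms up to $M$. Passing from $U'$ back to $U$ uses Shapiro's lemma: $H^i(U, \mathrm{Coind}_{U'}^U M) \cong H^i(U', M)$, compatibly with the profinite version, together with the fact that any finite $U$-module embeds in a coinduced one. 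The bottom degrees are handled by the standard dimension-shifting induction in $n$.
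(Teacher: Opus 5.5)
Your proof is correct and is essentially the paper's. You identify $H^i(\widehat U,\mathbb F_p)$ with $\varinjlim_V H^i(U/V,\mathbb F_p)$, use surjectivity to see that every class in $H^i(U,\mathbb F_p)$ is inflated from some $U/V$ and therefore dies on restriction to $V\in\mathcal T$ (so $\varinjlim_{V} H^i(V,\mathbb F_p)=0$), and then invoke \cite[Lemma 3.1]{G-J-Z}; the paper organises this as an induction on $n$, checking only the top degree at each step, which amounts to the same thing.

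The paper applies that lemma with trivial $\mathbb F_p$ coefficients, so your contingency paragraph is not needed. If you ever do need arbitrary finite modules, propagate the vanishing-on-restriction property through the composition series rather than the isomorphisms: a five-lemma d\'evissage of isomorphisms stalls in the top degree $i=n$, because it would require injectivity in degree $n+1$.
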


\begin{proof} We induct on $n$, assume the result holds for $n-1$ and $\widehat{U} = \varprojlim_{U_0 \leq U, U_0 \in \mathcal{T}} U/ U_0$, hence $$H^n(\widehat{U}, \mathbb{F}_p) \simeq \varinjlim_{U_0} H^n(U/U_0, \mathbb{F}_p).$$ Hence the map $$\varinjlim_{U_0} H^n(U/U_0, \mathbb{F}_p) \to H^n(U, \mathbb{F}_p)$$ is an isomorphism, in particular is surjective.  In particular every element $[x] \not= [0] \in H^n(U, \mathbb{F}_p) $ is an image of some $[y] \not= [0] \in H^n(U/U_0, \mathbb{F}_p)$, hence the map $H^n(U, \mathbb{F}_p) \to H^n(U_0, \mathbb{F}_p)$ sends $[x]$ to $0$. Thus $\varinjlim_{U \in \mathcal{T}} H^n(U, \mathbb{F}_p) = 0$ and we can apply \cite[Lemma 3.1]{G-J-Z}.
\end{proof}

\begin{theorem} \label{surprise} Let $G$ be a discrete group,  $\mathcal{T}$ be a set of normal subgroups of finite index in $G$ such that $\mathcal{T}$ defines the profinite topology of $G$ and for every prime integer $p$ consider $\mathbb{F}_p$ as the trivial $G$-module. Then the following are equivalent:

a) $G \in {\mathcal{A}}_n$;

b) $G \in {\mathcal{A}}_n^{hom}$;

c)  the map $H^i ( \widehat{U}, \mathbb{F}_p) \to H^i(U, \mathbb{F}_p)$ is an isomorphism for all $ i \leq n$, $U \in \mathcal{T}$ and for every prime integer $p$;

d)  the map $H_i(U, \mathbb{F}_p) \to H_i ( \widehat{U}, \mathbb{F}_p)$ is an isomorphism for all $ i \leq n$,  $U \in \mathcal{T}$ and  for every prime integer $p$.
\end{theorem}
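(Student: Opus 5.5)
We prove the cycle of implications a) $\Rightarrow$ c) $\Leftrightarrow$ d), c) $\Rightarrow$ a) and a) $\Leftrightarrow$ b). The tool throughout is Pontryagin duality between homology with coefficients in a finite module $A$ and cohomology with coefficients in the dual $A^* = \mathrm{Hom}(A, \mathbb{Q}/\Z)$. For a discrete group $U$ and a finite discrete $\widehat{U}$-module $A$ we have natural isomorphisms
$$H^i(U, A^*) \simeq H_i(U, A)^*, \qquad H^i(\widehat{U}, A^*) \simeq H_i(\widehat{U}, A)^*.$$
In the profinite case this is the standard duality between continuous homology and cohomology of profinite groups with finite coefficients \cite{R-Z}. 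In the discrete case it follows from the universal coefficient theorem, because $\mathbb{Q}/\Z$ is an injective abelian group, so $\mathrm{Hom}(-, \mathbb{Q}/\Z)$ is exact and commutes with taking homology of $P_\bullet \otimes_{\Z U} A$ for a projective resolution $P_\bullet$.

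Both isomorphisms are natural with respect to the map $U \to \widehat{U}$. Hence the cohomology map $H^i(\widehat{U}, A^*) \to H^i(U, A^*)$ is the Pontryagin dual of the homology map $H_i(U, A) \to H_i(\widehat{U}, A)$. One point needs care: the homology groups $H_i(U,A)$ need not be finite, so the dual must be taken in the category of discrete torsion abelian groups against compact ones. This causes no trouble, since $\mathrm{Hom}(-, \mathbb{Q}/\Z)$ is faithful and exact on all abelian groups, so a homomorphism is an isomorphism if and only if its dual is. Since $A \mapsto A^*$ is a bijection on finite $\widehat{U}$-modules, and $\mathbb{F}_p^* \simeq \mathbb{F}_p$ as trivial modules, this gives a) $\Leftrightarrow$ b) with $U = G$, and c) $\Leftrightarrow$ d) by taking $A = \mathbb{F}_p$ for each $U \in \mathcal{T}$.

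For c) $\Rightarrow$ a) we apply Lemma \ref{Pavel} directly. For a) $\Rightarrow$ c), take $U \in \mathcal{T}$. By Lemma \ref{Serre2} a) (for $n \geq 2$; for small $n$ one uses the Shapiro-lemma argument underlying it), $U \in \mathcal{A}_n$. Since $U$ has finite index in $G$ and the profinite topology of $G$ induces that of $U$, $\widehat{U}$ is the closure of $U$ in $\widehat{G}$. So $U \in \mathcal{A}_n$ with $A = \mathbb{F}_p$ gives c). An alternative route to a) $\Rightarrow$ c) avoids the restriction on $n$: by Shapiro's lemma, $H^i(U, \mathbb{F}_p) \simeq H^i(G, \mathrm{Coind}_U^G \mathbb{F}_p)$, and similarly for $\widehat{U} \le \widehat{G}$ with coinduced module $\mathbb{F}_p[G/U]$, which is a finite $\widehat{G}$-module. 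Compatibility of the two Shapiro isomorphisms with the comparison maps then yields c) from a) for all $n$.

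The main obstacle is checking that the duality isomorphisms are natural and compatible with the comparison map $U \to \widehat{U}$. This requires computing profinite homology with finite coefficients via $\varprojlim$ over finite quotients, or via projective resolutions in the category of profinite modules, and matching this with the discrete side. I would handle it by reducing to finite quotients. We have
$$H_i(\widehat{U}, A) = \varprojlim_{U_0} H_i(U/U_0, A), \qquad H^i(\widehat{U}, A^*) = \varinjlim_{U_0} H^i(U/U_0, A^*).$$
For finite groups the duality is the classical universal coefficient statement and is natural. Passing to limits then gives compatibility, since the dual of an inverse limit of finite groups is the direct limit of their duals.
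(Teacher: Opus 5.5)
\textbf{There is a genuine gap in the duality step.} Write $\alpha\colon H_i(U,A)\to H_i(\widehat U,A)$ for the homology comparison map. Under your two isomorphisms, the cohomology comparison map $H^i(\widehat U,A^*)\to H^i(U,A^*)$ becomes
\[
\beta\colon \mathrm{Hom}_{\mathrm{cts}}\bigl(H_i(\widehat U,A),\mathbb{Q}/\Z\bigr)\longrightarrow \mathrm{Hom}\bigl(H_i(U,A),\mathbb{Q}/\Z\bigr),\qquad \psi\mapsto \psi\circ\alpha .
\]
On the profinite side only continuous characters occur; your last paragraph computes exactly this, as $\varinjlim_{U_0}H_i(U/U_0,A)^*$. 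On the discrete side all characters occur. So $\beta$ is not $\mathrm{Hom}(\alpha,\mathbb{Q}/\Z)$, and the faithfulness and exactness of $\mathrm{Hom}(-,\mathbb{Q}/\Z)$ on abstract abelian groups say nothing about it.

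What bijectivity of $\beta$ actually gives is that $\alpha$ is injective with dense image. Conversely, bijectivity of $\alpha$ gives surjectivity of $\beta$ only if every character of $H_i(U,A)$ is continuous for the topology pulled back from $H_i(\widehat U,A)$. That is automatic only when $H_i(\widehat U,A)$ is finite. The paper's proof makes the same identification (it asserts that $\beta$ is injective iff $\alpha$ is surjective), so you are following its route. The step is nonetheless invalid whenever $H_i(\widehat U,A)$ is infinite.

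\textbf{This is not a technicality: a)$\Rightarrow$b) and c)$\Rightarrow$d) fail for arbitrary discrete $G$.} Let $G$ be free of countably infinite rank; every finite-index $U$ is again free of countably infinite rank. Every group lies in $\mathcal{A}_1$ (Serre). For $i\ge 2$, both $H^i(U,A)$ and $H^i(\widehat U,A)$ vanish. The second vanishes because $\widehat U$ is projective: a homomorphism $U\to C$ lifts along any epimorphism $B\to C$ of finite groups by lifting the images of a free basis, so $\mathrm{cd}\,\widehat U\le 1$. Hence a) and c) hold for every $n$.

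On the other hand, $H_1(U,\mathbb{F}_p)=U^{ab}/pU^{ab}$ is countable. But $H_1(\widehat U,\mathbb{F}_p)$ is a profinite group whose Pontryagin dual $H^1(\widehat U,\mathbb{F}_p)\simeq \mathrm{Hom}(U,\mathbb{F}_p)$ is infinite, and an infinite profinite group is uncountable. So $\alpha$ is not onto, and b) and d) fail for all $n\ge 1$.

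\textbf{What does survive.} Your argument becomes correct once you assume $H_i(U,A)$ is finite for the relevant $U$, finite $A$ and $i\le n$ (for example $G$ of type $FP_n$, the setting of the Wilkes and Hillman--Kochloukova results that Theorem A is meant to generalise). Then the image of $\alpha$ is finite, hence closed, and being dense it is everything. And when $\alpha$ is an isomorphism its target is finite, so continuous and abstract characters coincide. Your a)$\Rightarrow$c) (via Shapiro) and c)$\Rightarrow$a) (via Lemma \ref{Pavel}) are fine and agree with the paper.
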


\begin{proof} 

Let $\mathcal{R}$ be a free resolution of the trivial $\Z G$-module $\Z$,  $M$ be a finite $p$-primary $G$-module and $\mathcal{C} = \mathcal{R} \otimes_{\Z G} M$. Using the standard adjoint isomorphism we have
$$Hom_{\z G} ( \mathcal{R}, Hom_{\Z} (M, \mathbb{Q}_p / \Z_p)) \simeq Hom_{\Z} ({\mathcal{R}} \otimes_{\Z G} M, \Q_p/ \Z_p)$$
where $\Z_p$ is the ring of $p$-adic integers, i.e. the pro-$p$ completion of $\Z$ and $\Q_p$ is the field of fractions of $\Z_p$. Note that $\Q_p/ \Z_p \simeq \Z[\frac{1}{p}]/ \Z$.

 Let $M^* = Hom_{\Z} (M, \Q_p/ \Z_p)$.
Hence
$$H^i(G, M^*) \simeq H^i(Hom_{\Z G} (\mathcal{R}, M^*)) \simeq H^i(Hom_{\Z}(\mathcal{C}, \Q_p/ \Z_p))$$  
Since $Hom_{\Z} ( - , \Q_p/ \Z_p)$ is an exact additive functor if considered as a functor on the category of $p$-primary $\z$-modules, it commutes with  homology, and  we have
$$
H^i(Hom_{\Z}(\mathcal{C}, \Q_p/ \Z_p)) \simeq Hom_{\Z}(H_i(\mathcal{C}),  \Q_p/ \Z_p) \simeq Hom_{\Z}(H_i(G, M),  \Q_p/ \Z_p)$$ 
This proves that
\begin{equation} \label{sur1} H^i(G, M^*) \simeq H_i(G, M)^* \end{equation}
A profinite version is valid too
\begin{equation} \label{sur2} H^i(\widehat{G}, M^*) \simeq H_i(\widehat{G}, M)^* \end{equation}
which is known as Pontryagin duality in profinite groups \cite[Prop. 6.3.6]{R-Z}. It could be obtained from (\ref{sur1}) applied for $G/U$ and then taking direct limit over all normal subgroups  $U$ of finite index in $G$.

a) implies b) We have to show that for every finite $G$-module $M$ the map $H_i(G, M) \to H_i ( \widehat{G}, M)$ is an isomorphism for $i \leq n$. Since $M$ is a direct sum of  its $p$-primary components we can assume that $M$ is $p$-primary. Since $G \in {\mathcal{A}}_n$ then  $ H^i(\widehat{G}, M^*) \to H^i(G, M^*) $ is an isomorphism, hence by (\ref{sur1}),(\ref{sur2}) we get that the map $$\beta: H_i(\widehat{G}, M)^* \to 
H_i(G, M)^*,$$ induced by $G \to \widehat{G}$, is an isomorphism. Note that $\beta = \alpha^*$ where 
$$\alpha: H_i(G, M) \to H_i ( \widehat{G}, M)$$ is induced by the canonical map $G \to G^*$, $\beta$ is surjective if and only if $\alpha$ is injective and $\beta$ is injective if and only if $\alpha$ is surjective. Thus $\alpha$ is an isomorphism.

b) implies a) We have to show that for every finite $G$-module $M$ the map $H^i(\widehat{G}, M) \to H^i ( {G}, M)$ is an isomorphism for all $ i \leq n$ . Again it suffices to consider the case when $M$ is $p$-primary.  By (\ref{sur1}), (\ref{sur2}) together with $M^{**} \simeq M$ (this holds since $M$ is finite $p$-primary)  the map $H^i(\widehat{G}, M) \to H^i ( {G}, M)$  is an isomorphism if and only if 
the map $$ H_i(\widehat{G}, M^*)^* \to H_i(G, M^*)^*,$$ induced by the canonical map $G \to \widehat{G}$, is an isomorphism. But we know that the map $H_i(G, M^*) \to H_i( \widehat{G}, M^*)$ is an isomorphism, hence its dual is an isomorphism.

a) equivalent to c) This follows by Lemma \ref{Pavel}   (for c) implies a)) and the Shapiro Lemma in cohomology (for a) implies c)). 

b) implies d) This  follows from the Shapiro Lemma in homology.

d) implies c) Just substitute $G$ with $U$ in the proof of b) implies a) and set $M = \mathbb{F}_p$.
\end{proof}

As a corollary of the above result that implies the equivalence of $\mathcal{A}_2$ and $\mathcal{A}_2^{hom}$ and Theorem \ref{equi}  we obtain:

 \begin{cor} \label{equi2}
If $G \in \mathcal{A}_2$, $G$ is residually finite and $H_2(G, \z)$ is finitely generated then the map $H_2(G, \Z) \to H_2( \widehat{G}, \widehat{\Z})$ is injective and $G \wedge G$ is residually finite.
\end{cor}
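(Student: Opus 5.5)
The plan is to combine Theorem \ref{surprise} with Theorem \ref{equi}. Since $G \in \mathcal{A}_2$, Theorem \ref{surprise} (a implies b) gives $G \in \mathcal{A}_2^{hom}$. In particular, for every integer $m \geq 2$ the map $H_2(G, \Z/m\Z) \to H_2(\widehat{G}, \Z/m\Z)$ is an isomorphism, where $\Z/m\Z$ is the trivial module. This uses only that $\Z/m\Z$ is a finite discrete $\widehat{G}$-module.

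By hypothesis $H_2(G,\Z)$ is finitely generated, so the first part of Theorem \ref{equi} applies directly. It yields injectivity of $H_2(G, \Z) \to H_2(\widehat{G}, \widehat{\Z})$.

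Finally, $G$ is residually finite and this map is injective, so Theorem \ref{old1} a1) shows that $G \wedge G$ is residually finite. Alternatively, the "in particular" clause of Theorem \ref{equi} gives this conclusion in one step once $G \in \mathcal{A}_2^{hom}$ is known.

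There is no genuine obstacle. The only point needing care is that the coefficient modules $\Z/m\Z$ are finite trivial $\widehat{G}$-modules, so the $\mathcal{A}_2^{hom}$ condition applies to them verbatim.
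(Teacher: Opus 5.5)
Your proposal is correct and follows the paper's own argument: Theorem \ref{surprise} gives $\mathcal{A}_2 = \mathcal{A}_2^{hom}$, and Theorem \ref{equi} then yields both the injectivity of $H_2(G,\Z) \to H_2(\widehat{G},\widehat{\Z})$ and the residual finiteness of $G \wedge G$. Invoking Theorem \ref{old1} a1) for the last step, as you do, is equally valid.
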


  We say that $G$ is {\it  (cohomologically) $p$-good in dimension $\leq n$} if for every finite pro-$p$ $\mathbb{Z}_p[[\widehat{G}_p]]$-module $A$ the canonical map $G \to \widehat{G}_p$ induces an isomorphism 
$$H^{i}(\widehat{G}_p, A) \to H^i(G, A) \hbox{ for every } i \leq n,$$
where $H^{i}(\widehat{G}_p, A)$ denotes continuous cohomology. 
We say $G$ is {\it homologically $p$-good in dimension $\leq n$} if 
for every finite pro-$p$ $\mathbb{Z}_p[[\widehat{G}_p]]$-module $A$ the canonical map $G \to \widehat{G}_p$ induces an isomorphism 
$$ H_i(G, A) \to H_{i}(\widehat{G}_p, A) \hbox{ for every } i \leq n,$$
where $H_i(\widehat{G}_p, A)$ denotes profinite (continuous) homology.

 \begin{theorem} \label{surprise-p0} Let  $p$ be a prime integer  and $G$ be a discrete group. Then the following are equivalent:

a) $G$ (cohomologically) $p$-good in dimension $\leq n$;

b) $G$ homologically $p$-good in dimension $\leq n$.

\end{theorem}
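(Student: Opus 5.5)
The plan is to repeat the duality argument from the proof of Theorem \ref{surprise}, replacing $\widehat{G}$ by $\widehat{G}_p$. Fix a finite pro-$p$ $\mathbb{Z}_p[[\widehat{G}_p]]$-module $A$. It is a finite $p$-primary abelian group on which $G$ acts through a finite $p$-quotient. Put $A^* = Hom_{\Z}(A, \Q_p/\Z_p)$ with the contragredient action. Then $A^*$ is again a finite $p$-primary module on which $G$ acts through a finite $p$-group, so it is again a finite pro-$p$ $\mathbb{Z}_p[[\widehat{G}_p]]$-module. Moreover $A^{**} \simeq A$ naturally.

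The discrete half of the duality is (\ref{sur1}): $H^i(G, A^*) \simeq H_i(G,A)^*$, with the same proof via a free resolution, the adjoint isomorphism and exactness of $Hom_{\Z}(-,\Q_p/\Z_p)$ on $p$-primary modules. The profinite half is the pro-$p$ analogue of (\ref{sur2}):
$$H^i(\widehat{G}_p, A^*) \simeq H_i(\widehat{G}_p, A)^*.$$
This is Pontryagin duality for profinite groups \cite[Prop. 6.3.6]{R-Z}, applied to the profinite group $\widehat{G}_p$. One can also derive it directly from (\ref{sur1}). Apply (\ref{sur1}) to the finite $p$-groups $G/U$, where $U$ runs over the normal subgroups of $p$-power index acting trivially on $A$. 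Then take the direct limit in cohomology and the inverse limit in homology. Dualising turns $\varprojlim$ of finite groups into $\varinjlim$ of their duals.

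The key technical point is naturality. The isomorphisms above must be compatible with the maps induced by $G \to \widehat{G}_p$, so that the map $H^i(\widehat{G}_p, A^*) \to H^i(G,A^*)$ corresponds to $\alpha^*$, where $\alpha : H_i(G,A) \to H_i(\widehat{G}_p, A)$. This follows because everything factors through the finite quotients $G/U$ and the duality (\ref{sur1}) is natural in the group. I expect this compatibility, for the pro-$p$ limit description of continuous (co)homology, to be the only step needing care.

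For the implication a) $\Rightarrow$ b), assume $G$ is cohomologically $p$-good in dimension $\leq n$. For $i \leq n$ the map $H^i(\widehat{G}_p, A^*) \to H^i(G,A^*)$ is an isomorphism, that is, $\alpha^*$ is an isomorphism. Since Pontryagin duality on torsion abelian groups is faithful and exact, $\alpha$ is an isomorphism. Note that $H_i(G,A)$ is $p$-primary torsion and need not be finite, but the duality $Hom(-,\Q_p/\Z_p)$ still detects injectivity and surjectivity on $p$-primary groups. So b) holds.

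For the implication b) $\Rightarrow$ a), apply b) to $A^*$. Then $H_i(G,A^*) \to H_i(\widehat{G}_p,A^*)$ is an isomorphism, hence so is its dual. By the two dualities together with $A^{**}\simeq A$, this dual map is $H^i(\widehat{G}_p,A) \to H^i(G,A)$. This gives a).
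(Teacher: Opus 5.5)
Your proposal is correct and follows essentially the same route as the paper. The paper's proof simply repeats the duality argument for (a)$\Leftrightarrow$(b) in Theorem \ref{surprise} with $\widehat{G}$ replaced by $\widehat{G}_p$, using (\ref{sur1}), its profinite analogue via \cite[Prop. 6.3.6]{R-Z}, and $A^{**}\simeq A$. You additionally spell out points the paper leaves implicit: that $A^*$ is again a finite pro-$p$ module, that the dualities are natural, and that duality is faithful on $p$-primary groups that need not be finite.
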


\begin{proof} 
The proof that a) and b) are equivalent is the same as the proof of the equivalence of  itens a) and b) from Theorem \ref{surprise} where we substitute $\widehat{G}$ with $\widehat{G}_p$. 
\end{proof}

\begin{prop} \label{surprise-p} Let $p$ be a prime integer and $G$ be a discrete group.

a) if the map $H^i ( \widehat{G}_p, \mathbb{F}_p) \to H^i(G, \mathbb{F}_p)$ is an isomorphism for all $ i \geq 1$ then $G$ is (cohomologically) $p$-good in all dimensions;

b) if the map $H_i(G, \mathbb{F}_p) \to H_i ( \widehat{G}_p, \mathbb{F}_p)$ is an isomorphism for all $ i \geq 1$ then $G$ is homologically $p$-good in all dimensions. \end{prop}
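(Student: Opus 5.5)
The proof rests on the standard fact that finite $p$-primary modules over a pro-$p$ group are filtered by trivial ones. Let $A$ be a finite $\mathbb{Z}_p[[\widehat{G}_p]]$-module. Since $\widehat{G}_p$ is pro-$p$ and $A$ is a finite $p$-group, the action factors through a finite $p$-group quotient. A finite $p$-group acting on a nonzero finite $p$-group has nonzero fixed points, so $A$ has a finite filtration
$$0 = A_0 < A_1 < \cdots < A_k = A$$
by $\widehat{G}_p$-submodules with each factor $A_j/A_{j-1} \simeq \mathbb{F}_p$ carrying the trivial action. I would prove part a) by induction on $k$, applying the five lemma to the long exact sequences in cohomology.

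For part a), take the short exact sequence $0 \to A_{k-1} \to A \to \mathbb{F}_p \to 0$. Both discrete cohomology of $G$ and continuous cohomology of $\widehat{G}_p$ with finite coefficients give long exact sequences, and the restriction maps along $G \to \widehat{G}_p$ form a commutative ladder between them. By induction the maps on $A_{k-1}$ are isomorphisms in all degrees, and by hypothesis so are the maps on $\mathbb{F}_p$. In degree $0$ the hypothesis is automatic, since $H^0$ with trivial $\mathbb{F}_p$ coefficients is $\mathbb{F}_p$ on both sides. The five lemma then gives that the map on $A$ is an isomorphism in every degree $i$. I would note that, because the hypothesis holds in all degrees, there is no boundary issue at the top degree, which is exactly why the statement requires all $i \geq 1$.

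Part b) is the same argument in homology. One uses the long exact sequences for $H_i(G,-)$ and for profinite homology $H_i(\widehat{G}_p,-)$, the latter being exact on finite (profinite) modules, together with the same filtration and the five lemma. Alternatively, b) follows from a) via Pontryagin duality as in Theorem \ref{surprise}: the proof of (d) implies (c) there shows that the $\mathbb{F}_p$-hypotheses in homology and in cohomology are equivalent, since $\mathbb{F}_p^* \simeq \mathbb{F}_p$. Theorem \ref{surprise-p0} then converts cohomological $p$-goodness into homological $p$-goodness.

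The main point to check, and the only real obstacle, is that the filtration exists compatibly on both sides. The $G$-action on $A$ is the restriction of the $\widehat{G}_p$-action, so $\widehat{G}_p$-submodules are $G$-submodules. One must also confirm that the profinite long exact sequences exist and are natural with respect to $G \to \widehat{G}_p$. Both hold because every module involved is finite, so continuous cochains coincide with direct limits over finite quotients and exactness is preserved.
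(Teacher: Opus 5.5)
Your proposal is correct and follows the paper's proof: induction on the size of the finite module via a filtration whose factors are the trivial module $\mathbb{F}_p$, the commutative ladder of long exact sequences induced by $G \to \widehat{G}_p$, and the five lemma. The paper writes out b) and calls a) similar, whereas you do the reverse; your explicit remark that degree $0$ is automatic covers a point the paper leaves implicit when its inductive step at $i=1$ uses the degree-$0$ map, and your alternative derivation of b) from a) via Pontryagin duality and Theorem \ref{surprise-p0} is also valid.
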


\begin{proof} We prove b). The proof of a) is similar. 
 Consider a  finite pro-$p$ $\mathbb{Z}_p[[\widehat{G}_p]]$-module $M$. We show by induction on $|M|$ that $H_i(G, M) \to H_i ( \widehat{G}_p, M)$ is an isomorphism for all $ i \geq 1$.
Since every finite pro-$p$ $\mathbb{Z}_p[[\widehat{G}_p]]$-module $M$ has a filtration,  where each quotient is a simple module and the trivial module $\mathbb{F}_p$ is the unique simple pro-$p$ $\mathbb{Z}_p[[\widehat{G}_p]]$-module we get that the basis of the induction holds for $M = \mathbb{F}_p$. If $|M| > p$ then there is a short exact sequence $0 \to M_1 \to M \to M_2 \to 0$ of pro-$p$ $\mathbb{Z}_p[[\widehat{G}_p]]$-modules and we have the following commutative diagram where the horizontal maps are the corresponding long exact sequences in homology and the vertical maps are induced by the canonical map $G \to \widehat{G}_p$
  \[ 
\begin{tikzcd}
  H_{i+1}(G, M_2) \arrow{r}{}
\arrow{d}{\varphi_{i+1,2}}
 &     H_i(G, M_1) \arrow{r}{}
\arrow{d}{\varphi_{i,1}}
 &    H_i(G,M) \arrow{r}{} \arrow{d}{\varphi_{i}}&  H_{i}(G,M_2) \arrow{r}{} \arrow{d}{\varphi_{i,2}}&  H_{i-1}(G,M_1)  \arrow{d}{\varphi_{i-1,1}}\\%
 H_{i+1}(\widehat{G}_p, M_2) \arrow{r}{}  &   H_i(\widehat{G}_p, M_1) \arrow{r}{}  &    H_i(\widehat{G}_p, M) \arrow{r}{} & H_{i}( \widehat{G}_p, M_2) \arrow{r}{} & H_{i-1}(\widehat{G}_p, M_1)\\%
 \end{tikzcd}
\]
By the inductive hypothesis the vertical maps  $\varphi_{i+1,2}$, $\varphi_{i,1}$, $\varphi_{i,2}$ and $\varphi_{i-1,1}$ are isomorphisms for $i \geq 1$, hence $\varphi_{i}$ is an isomorphism  for $i\geq 1$. 
\end{proof}

\section{Right angled Artin groups} \label{sectionRAAG}

We start with a generalization of Theorem \ref{equi}.

  \begin{lemma} \label{inj} a) Suppose that $G$ is a discrete group with $H_i(G, \mathbb{Z})$ finitely generated  and   $H_i(G, \mathbb{Z}/ n \mathbb{Z}) \to H_i(\widehat{G}, \mathbb{Z}/ n \mathbb{Z})$ is an isomorphism for  every integer $n \geq 2$  then $H_i(G, \mathbb{Z}) \to H_i(\widehat{G}, \widehat{\mathbb{Z}})$ is injective. In particular if $G \in \mathcal{A}_i^{hom}$ and $H_s(G, \mathbb{Z})$ is finitely generated for every $s \leq i$ then $H_s(G, \mathbb{Z}) \to H_s(\widehat{G}, \widehat{\mathbb{Z}})$ is injective for every $s \leq i$. 
  
  b) If $G$ is  a discrete group with $H_i(G, \mathbb{Z})$ finitely generated  and  $H_i(G, \mathbb{Z}/ p^j \mathbb{Z}) \to H_i(\widehat{G}_p, \mathbb{Z}/ p^j \mathbb{Z})$ is an isomorphism   for a fixed prime $p$ and every integer $j \geq 1$ then $Ker(H_i(G, \mathbb{Z}) \to H_i(\widehat{G}_p, {\mathbb{Z}_p}))$ is a   $p'$-finite group i.e. has order coprime to $p$.
  \end{lemma}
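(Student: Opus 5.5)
The plan is to compare integral and finite-coefficient homology through the universal coefficient sequences for the discrete group $G$ and for $\widehat{G}$ (respectively $\widehat{G}_p$). For a) fix $n\geq 2$. For the discrete group we have the natural short exact sequence
$$0 \to H_i(G,\Z)\otimes \Z/n\Z \to H_i(G,\Z/n\Z) \to Tor(H_{i-1}(G,\Z),\Z/n\Z)\to 0.$$
For the profinite group we use the analogous sequence with $H_i(\widehat{G},\widehat{\Z})$. This comes from the short exact sequence $0\to\widehat{\Z}\xrightarrow{n}\widehat{\Z}\to \Z/n\Z\to 0$ of profinite modules and the associated long exact sequence in continuous homology, which gives an injection $H_i(\widehat{G},\widehat{\Z})/n \hookrightarrow H_i(\widehat{G},\Z/n\Z)$. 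The canonical map $G\to \widehat{G}$ induces a map between these two sequences, since both are built from the same coefficient sequence $0\to \Z\to\Z\to\Z/n\Z\to 0$ mapped into $0\to \widehat{\Z}\to\widehat{\Z}\to\Z/n\Z\to 0$. Hence we obtain a commutative square
$$H_i(G,\Z)/nH_i(G,\Z) \to H_i(G,\Z/n\Z) \to H_i(\widehat{G},\Z/n\Z),$$
which equals the composite $H_i(G,\Z)/n \to H_i(\widehat{G},\widehat{\Z})/n \hookrightarrow H_i(\widehat{G},\Z/n\Z)$. The first arrow of the displayed composite is injective by the universal coefficient theorem, and the second is an isomorphism by hypothesis. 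Therefore the map $H_i(G,\Z)/n \to H_i(\widehat{G},\widehat{\Z})/n$ is injective for every $n$.

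Now take $x$ in the kernel of $H_i(G,\Z)\to H_i(\widehat{G},\widehat{\Z})$. By the previous paragraph, $x\in nH_i(G,\Z)$ for all $n\ge 2$. Since $H_i(G,\Z)$ is finitely generated abelian, $\bigcap_n nH_i(G,\Z)=0$, so $x=0$. The ``in particular'' clause follows by applying this for each $s\le i$, because $G\in\mathcal{A}_i^{hom}$ gives the isomorphism with coefficients $\Z/n\Z$ (trivial, finite modules) in all degrees $s\le i$.

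For b) repeat the argument with $\widehat{G}_p$, $\Z_p$ and $n=p^j$, using $0\to\Z_p\xrightarrow{p^j}\Z_p\to\Z/p^j\Z\to 0$. We get that the kernel $K$ of $H_i(G,\Z)\to H_i(\widehat{G}_p,\Z_p)$ satisfies $K\subseteq \bigcap_j p^jH_i(G,\Z)$. For a finitely generated abelian group $A=\Z^r\oplus T$, the intersection $\bigcap_j p^jA$ is the $p'$-part of $T$, which is a finite group of order coprime to $p$. So $K$ is $p'$-finite.

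The main obstacle is justifying the profinite side: that continuous homology with $\widehat{\Z}$ (or $\Z_p$) coefficients has the long exact sequence giving $H_i(\widehat{G},\widehat{\Z})/n\hookrightarrow H_i(\widehat{G},\Z/n\Z)$, and that it is compatible with the discrete one. I would derive this from exactness of continuous homology on profinite modules as in \cite{R-Z}, since $\widehat{\Z}$ is a profinite module and inverse limits of finite modules are exact. Alternatively, it suffices to note that the kernel of $H_i(G,\Z)\to H_i(G,\Z)/n\to H_i(\widehat G,\Z/n\Z)$ contains the kernel of the map to $H_i(\widehat{G},\widehat{\Z})$, which only needs naturality of the map induced by $\widehat{\Z}\to\Z/n\Z$.
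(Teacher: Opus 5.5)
Your proof is correct and follows essentially the same route as the paper. Both arguments use the universal coefficient injection $H_i(G,\Z)\otimes\Z/n\Z\hookrightarrow H_i(G,\Z/n\Z)$, the hypothesised isomorphism, and the fact that $\bigcap_n nH_i(G,\Z)=0$ (resp.\ that $\bigcap_j p^jH_i(G,\Z)$ is the finite $p'$-torsion) for a finitely generated abelian group. The only difference is that you argue at each level $n$ via naturality of the coefficient map $\widehat{\Z}\to\Z/n\Z$, whereas the paper first passes to $\varprojlim_n$ and uses $\varprojlim_n H_i(\widehat{G},\Z/n\Z)\simeq H_i(\widehat{G},\widehat{\Z})$; as you note yourself, your version does not even need the profinite universal coefficient sequence.
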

  
  \begin{proof}   a)    By the universal coefficients theorem 
   there is an exact sequence
    $$0 \to H_i(G, \mathbb{Z}) \otimes_{\mathbb{Z}} \mathbb{Z}/ n \mathbb{Z} \to H_i(G,  \mathbb{Z}/ n \mathbb{Z}) $$ Since inverse limit is left exact functor applying inverse limit over all positive integers $n$ we get an exact sequence
    \begin{equation} \label{eq111} 0 \to \varprojlim_n H_i(G, \mathbb{Z}) \otimes_{\mathbb{Z}} \mathbb{Z}/ n \mathbb{Z} \to \varprojlim_n H_i(G,  \mathbb{Z}/ n \mathbb{Z}) \end{equation}
    By assumption  there is an isomorphism $H_i(G,  \mathbb{Z}/ n \mathbb{Z}) \simeq H_i(\widehat{G},  \mathbb{Z}/ n \mathbb{Z})$, hence
    \begin{equation} \label{eq222}  \varprojlim_n H_i(G,  \mathbb{Z}/ n \mathbb{Z}) \simeq \varprojlim_n H_i(\widehat{G},  \mathbb{Z}/ n \mathbb{Z}) \simeq H_i(\widehat{G}, \varprojlim_n \mathbb{Z}/ n \mathbb{Z}) \simeq H_i(\widehat{G}, \widehat{\mathbb{Z}}). 
    \end{equation}
    We observe  that  the map
   $$ H_i(G, \mathbb{Z}) \to \varprojlim_n H_i(G, \mathbb{Z}) \otimes_{\mathbb{Z}} \mathbb{Z}/ n \mathbb{Z}$$ has kernel $ \cap_n n H_i(G, \mathbb{Z})=0$, the last
   holds since $H_i(G, \mathbb{Z}) $ is finitely generated abelian. 
   Then the proof is completed by (\ref{eq111}) and (\ref{eq222}).

b)  We can repeat part of the argument of a) with $\widehat{G}$ substituted by $ \widehat{G}_p$ and $n =p^j$ to deduce that $$Ker(H_i(G, \mathbb{Z})  \to  H_i( \widehat{G}_p, \mathbb{Z}_p)) \subseteq  \cap_{j \geq 1} p^j H_i(G, \mathbb{Z})$$ 
   Then since $H_i(G, \mathbb{Z})$ is a finitely generated abelian group $ \cap_{j \geq 1} p^j H_i(G, \mathbb{Z})$ is a   $p'$-finite group i.e. has order coprime to $p$.  \end{proof}
  
Lemma \ref{inj}, b)  and Theorem \ref{old1}, c2) imply the following corollary.

\begin{cor}  \label{cor-1} Let $p$ be a prime integer. Suppose that $G$ is  a discrete group with $H_2(G, \mathbb{Z})$ finitely generated and  $H_2(G, \mathbb{Z}/ p^j \mathbb{Z}) \to H_2(\widehat{G}_p, \mathbb{Z}/ p^j \mathbb{Z})$ is an isomorphism for every integer $j \geq 1$ and the torsion part of $H_2(G, \mathbb{Z})$ is a finite $p$-group.  Then $H_2(G, \mathbb{Z}) \to H_2(\widehat{G}_p, {\mathbb{Z}_p})$ is injective map. If additionally $G$ is residually $p$-finite then $G \wedge G$ is residually $p$-finite.
\end{cor}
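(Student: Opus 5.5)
The plan is to obtain the corollary directly from Lemma \ref{inj}, b), and then feed its conclusion into Theorem \ref{old1}, c2).

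\emph{Step 1 (the kernel lies in the torsion).} The hypotheses of Lemma \ref{inj}, b) hold with $i = 2$. Indeed, $H_2(G,\mathbb{Z})$ is finitely generated, and the maps $H_2(G,\mathbb{Z}/p^j\mathbb{Z}) \to H_2(\widehat{G}_p,\mathbb{Z}/p^j\mathbb{Z})$ are isomorphisms for all $j \geq 1$. So the kernel $K$ of $H_2(G,\mathbb{Z}) \to H_2(\widehat{G}_p,\mathbb{Z}_p)$ is a finite group of order coprime to $p$. More precisely, the proof of that lemma gives
$$K \subseteq \bigcap_{j\geq 1} p^j H_2(G,\mathbb{Z}).$$
In either form, $K$ is a finite subgroup of $H_2(G,\mathbb{Z})$, so it lies in the torsion part $t(H_2(G,\mathbb{Z}))$.

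\emph{Step 2 (the kernel is trivial).} By hypothesis $t(H_2(G,\mathbb{Z}))$ is a finite $p$-group. Hence $K$ is simultaneously a $p$-group and of order coprime to $p$, so $K = 1$. Concretely, write $H_2(G,\mathbb{Z}) \simeq \mathbb{Z}^r \oplus P$ with $P$ a finite $p$-group. Then $\bigcap_j p^j H_2(G,\mathbb{Z}) = 0$: the free part is $p$-adically separated, and $p^j P = 0$ for $j$ large. Either way, the map $H_2(G,\mathbb{Z}) \to H_2(\widehat{G}_p,\mathbb{Z}_p)$ is injective, which is the first assertion.

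\emph{Step 3 (residual $p$-finiteness).} Now assume in addition that $G$ is residually $p$-finite. Combined with the injectivity just proved, this is exactly the hypothesis of Theorem \ref{old1}, c2), which yields that $G \wedge G$ is residually $p$-finite.

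I expect no real obstacle. The only point needing care is that the torsion subgroup of a finitely generated abelian group is finite. This is what lets the ``$p'$-finite'' conclusion of Lemma \ref{inj}, b) be compared with the hypothesis that the torsion part is a $p$-group.
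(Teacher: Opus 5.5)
Your proposal is correct and follows the paper's own route exactly: the paper derives this corollary directly from Lemma~\ref{inj}, b), whose proof puts the kernel inside $\bigcap_{j} p^j H_2(G,\mathbb{Z})$, a $p'$-group, which is therefore trivial when the torsion of $H_2(G,\mathbb{Z})$ is a $p$-group, and then invokes Theorem~\ref{old1}, c2). Your Step 2 simply spells out the comparison of a $p'$-group with a $p$-group, which the paper leaves implicit.
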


\begin{cor} Let $G$ be a RAAG. Then the canonical map $H_i(G, \mathbb{Z}) \to H_i(\widehat{G}_p, {\mathbb{Z}_p})$ is injective. In particular $G \wedge G$ and $G \otimes G$ are residually $p$-finite.
\end{cor}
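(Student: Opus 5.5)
We would feed the homology of a RAAG into Corollary \ref{cor-1} and Theorem \ref{old1}. First, the integral homology of a RAAG $G = G_\Gamma$ is well known: the Salvetti complex is a finite $K(G,1)$ whose cellular chain complex has zero differentials. Hence $H_i(G,\Z)$ is free abelian of finite rank, with rank the number of $i$-cliques of $\Gamma$. In particular $H_i(G,\Z)$ is finitely generated and torsion-free, so its torsion part is trivial, and hence a finite $p$-group, for every $p$.

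Second, we would show that $H_i(G,\Z/p^j\Z) \to H_i(\widehat{G}_p,\Z/p^j\Z)$ is an isomorphism for all $i$ and $j\ge 1$. It is known (Lorensen \cite{L2} and related work) that RAAGs are $p$-good: $H^i(\widehat{G}_p,\mathbb{F}_p)\to H^i(G,\mathbb{F}_p)$ is an isomorphism for all $i$. One can prove this by induction on the number of vertices. Writing $G$ as an amalgam $G_{\Gamma\setminus v} *_{G_{\mathrm{lk}(v)}} (G_{\mathrm{lk}(v)}\times\Z)$, one uses that the pro-$p$ completion respects this decomposition as a proper amalgam, and compares the Mayer--Vietoris sequences by the five lemma. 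Proposition \ref{surprise-p} a) then gives cohomological $p$-goodness in all dimensions. Theorem \ref{surprise-p0} converts this into homological $p$-goodness, and applying it to the finite pro-$p$ trivial module $\Z/p^j\Z$ gives the required isomorphisms.

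Third, we apply Lemma \ref{inj} b) for every $i$. The kernel of $H_i(G,\Z)\to H_i(\widehat{G}_p,\Z_p)$ lies in $\cap_j p^jH_i(G,\Z)$, which is $0$ because $H_i(G,\Z)$ is free abelian of finite rank. This gives injectivity. For $i=2$, together with residual $p$-finiteness of $G$ \cite{L2}, Theorem \ref{old1} c2) (equivalently Corollary \ref{cor-1}) shows that $G\wedge G$ is residually $p$-finite.

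Finally, for $G\otimes G$ we use Theorem \ref{old1} e). Here $G/G'\simeq \Z^{|V(\Gamma)|}$ is finitely generated with no $2$-torsion. By the Remark after Theorem \ref{2tor}, $\Delta(G)\simeq\Gamma(\Z^m)\simeq \Z^{m(m+1)/2}$ is torsion-free. Hence $G\otimes G\simeq \Delta(G)\times(G\wedge G)$ is residually $p$-finite for every $p$. The main obstacle is the second step, the $p$-goodness of RAAGs. If it cannot be quoted, the inductive amalgam argument requires checking that the relevant subgroups are closed in the pro-$p$ topology and induce the pro-$p$ topology. This holds for retracts such as the special subgroups $G_\Delta$, since $G\to G_\Delta$ killing the other vertices is a retraction.
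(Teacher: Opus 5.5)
Your proposal is correct and follows the paper's proof essentially step for step. Both arguments upgrade Lorensen's $\mathbb{F}_p$-isomorphisms to homological $p$-goodness via Proposition \ref{surprise-p} and Theorem \ref{surprise-p0}, get torsion-freeness of $H_2(G,\mathbb{Z})$ from the Salvetti complex, and apply Lemma \ref{inj} b) (Corollary \ref{cor-1}) together with Theorem \ref{old1} c2); they then use the splitting $G\otimes G\simeq\Delta(G)\times(G\wedge G)$ with $\Delta(G)$ finitely generated and torsion-free. The only minor differences are that you write out injectivity for every $i$ (the paper does only $i=2$), and that you identify $\Delta(G)\simeq\Gamma(G^{ab})$ via the Remark after Theorem \ref{2tor}, whereas the paper views it as a subgroup of $G^{ab}\otimes G^{ab}$.
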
 

\begin{proof}

In \cite{L2} it was proved that for a RAAG $G$ the map $$H^i(\widehat{G}_p, \mathbb{F}_p) \to H^i(G, \mathbb{F}_p)$$ is an isomorphism  for every $i$,  where $\mathbb{F}_p$ is the trivial $\widehat{G}_p$-module. Then we can apply Theorem \ref{surprise-p}, Theorem \ref{surprise-p0} and conclude that $G$ is homologically $p$-good in all dimensions.
 Furthermore in \cite{L2} it is proved that $G$ is residually $p$-finite.  The fact that RAAGs are  (cohomologically) good was proved in \cite{L} but we will not use it here.

We claim that $H_2(G, \Z)$ is torsion-free. Let $\mathcal{P}$ be the free resolution of the trivial $\z G$-module $\z$ that  comes from the Salveti complex of $G$. Then $\mathcal P \otimes_{\Z G} \Z$ has trivial differentials. Hence $$H_2(G, \Z) \simeq H_2(\mathcal P \otimes_{\Z G} \Z) \simeq \Z E  \hbox{ is free abelian,}$$ where $E$ is the set of edges of the simplicial graph that defines $G$.   Then Corollary \ref{cor-1} implies that
$H_2(G, \mathbb{Z}) \to H_2(\widehat{G}_p, {\mathbb{Z}_p})$ is injective  and  $G \wedge G$ is residually $p$-finite.

Finally since $G^{ab}$ does not have 2-torsion we have by   Theorem \ref{2tor} that $G \otimes G = \Delta(G) \times ( G \wedge G)$. Note that by \cite[Prop. 2.2 iii]{B-F-M} we have that $$\Delta(G) = \Delta(G^{ab}) = Ker (G^{ab} \otimes G^{ab} \to G^{ab} \wedge G^{ab})$$
Since $G^{ab}$ is finitely generated, torsion-free we conclude that $G^{ab} \otimes G^{ab}$ and any subgroup of  $G^{ab} \otimes G^{ab}$ is finitely generated, torsion-free, in particular $\Delta(G)$ is finitely generated, torsion-free, hence  is residually $p$-finite.

\end{proof} 

  \section{More Artin groups} \label{SectAr}
  
  Let $\Gamma$ be a finite labelled simplicial graph, i.e., no loops and no more than one edge could link two vertices if any,  where each edge has a label  that is an integer greater or equal to 2. Denote by $V(\Gamma)$ the set of vertices of $\Gamma$. Recall that the Artin group $$ G_{\Gamma} = \langle V(\Gamma)  \ | \ w(a,b) = v( a,b)  \hbox{ where } a,b \hbox{ are linked by an edge in } \Gamma \rangle$$
  where $w(a,b) = aba \ldots$ and $v(a,b) = bab \ldots $ are  both alternating products of $a$ and $b$ of length  that equal the label of the edge that links $a$ and $b$ in $\Gamma$.

 A group $G$ is of homological type $FP_{\infty}$ if there is a projective resolution of the trivial $\Z G$-module with all projectives finitely generated.
  \begin{lemma} \label{dihedral}  2-generated Artin groups $G$, i.e. dihedral Artin groups,  are (cohomologically) good, residually finite and of type $FP_{\infty}$. \end{lemma}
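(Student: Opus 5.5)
Let $G = G_\Gamma$ be the Artin group on two generators $a,b$ with label $m \geq 2$; if there is no edge, $G$ is free of rank 2 and everything is classical. The plan is to exhibit $G$ as a central extension, or as a finite-index overgroup, of a group that is known to be good, and then apply the closure results Lemma \ref{S1}, Lemma \ref{Serre2} and Proposition \ref{Lor1}.

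\textbf{Structure.} Let $z = (ab)^{m/2}$ if $m$ is even and $z = (ab)^m$ if $m$ is odd. It is classical that $\langle z \rangle \simeq \Z$ is central in $G$. Moreover $G/\langle z \rangle$ is virtually free: for $m$ odd it is $\langle x, y \mid x^2 = y^m \rangle/\langle x^2\rangle \simeq \Z/2 * \Z/m$, where $x = aba\cdots$ ($m$ letters) and $y = ab$. For $m$ even it is a free product with amalgamation of finite cyclic groups, e.g. $\Z * \Z/(m/2)$ via the presentation $\langle a, y \mid y^{m/2} \text{ central}\rangle$ with $y = ab$. In either case $G/\langle z \rangle$ is a fundamental group of a finite graph of finite groups. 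Hence it is virtually free, residually finite, good (free groups are good and Lemma \ref{Serre2} a) applies) and of type $FP_\infty$.

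\textbf{Conclusions.} Consider the central extension $1 \to \langle z \rangle \to G \to G/\langle z \rangle \to 1$. The kernel $\Z$ is finitely generated, residually finite and good, and the quotient lies in $\mathcal{A}_2$ and is residually finite. Proposition \ref{Lor1} then gives that $G$ is residually finite. Full goodness follows from Lemma \ref{Serre2} b) applied for every $n$: $N = \Z$ is finitely generated and $H^q(\Z, M)$ is finite for every finite module $M$. Type $FP_\infty$ follows because an extension of an $FP_\infty$ group by an $FP_\infty$ group is $FP_\infty$; alternatively, the Salvetti complex is a finite $K(G,1)$, since dihedral Artin groups are of spherical type when $m$ is finite.

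\textbf{Main obstacle.} The step needing care is the identification of $G/\langle z \rangle$ as virtually free, especially in the even case. Here I would pass to the finite-index subgroup of $G$ generated by $a^2$, $b$ and their conjugates, or simply cite the standard fact that $G/Z(G)$ is virtually free for every spherical dihedral Artin group. A cleaner route, if preferred, is to note that $G$ is virtually $F \times \Z$ with $F$ free. Then goodness, residual finiteness and $FP_\infty$ pass up from finite index by Lemma \ref{Serre2} a) and standard facts, since $F \times \Z$ is good by Lemma \ref{S1} applied in all dimensions via Lemma \ref{Serre2} b).
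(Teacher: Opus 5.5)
Your proposal is correct, but it takes a different route from the paper.

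\textbf{The paper's route.} It treats the two parities separately. For $m=2s$ it passes to the finite-index normal subgroup $K=\langle x^s, b, b^x,\dots,b^{x^{s-1}}\rangle\simeq \Z\times F_s$, where $x=ab$. Goodness then lifts to $G$ by Lemma \ref{Serre2} a). For $m=2s+1$ it shows that the normal closure of $u=ab^{-1}$, which is the kernel of $G\to G^{ab}\simeq\Z$, is free of rank $2s$. So $G$ is free-by-cyclic, and the paper applies Lemma \ref{Serre2} b) and Proposition \ref{Lor1}.

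\textbf{Your route.} You use a single decomposition for both parities: the central extension $1\to\langle z\rangle\to G\to G/\langle z\rangle\to 1$. The quotient is $\Z/2*\Z/m$ via the torus-knot presentation, or $\Z*\Z/s$ via $G=\langle a,y\mid [a,y^{s}]=1\rangle$ with $y=ab$. Lemma \ref{Serre2} b) and Proposition \ref{Lor1} are then applied once, with kernel $\Z$, where finiteness of $H^q(\Z,M)$ is immediate.

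\textbf{Comparison.} Your approach gives a uniform argument. It also avoids the paper's Reidemeister--Schreier computation showing that the kernel in the odd case is finitely generated free. The price is relying on the torus-knot presentation and the identification of the central element $z$, both of which are standard and easy to check. You also cover the case with no edge ($G$ free of rank $2$), which the paper does not mention.

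\textbf{Minor slips.}
\begin{itemize}
\item For $m$ even, $\Z*\Z/s$ is not ``a free product with amalgamation of finite cyclic groups''. Your explicit example is right; only the description is off. It is, as you then say, the fundamental group of a finite graph of finite groups.
\item The ``main obstacle'' paragraph is unnecessary. Your substitution $y=ab$ already identifies $G/\langle z\rangle$ explicitly in the even case.
\item The ``cleaner route'' you mention at the end ($G$ virtually $F\times\Z$) is essentially the paper's even case. There the relevant tool is Lemma \ref{Serre2} b), not Lemma \ref{S1}, which only concerns $\mathcal{A}_2$.
\end{itemize}
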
 

\begin{proof}
1)If the Artin group corresponds to $m = 2s$ then $G = \langle a,b \ | \ (ab)^s = (ba)^s \rangle$. Then for $x = ab$ we have $x^s = b x^s b^{-1}$ and  the normal subgroup $K$ generated as normal subgroup by $x^s$ and $b$ has finite index in $G$ and $K = \langle x^s, b, b^x, b^{x^2}, \ldots, b^{x^{s-1}} \rangle \simeq A \times B$, where $A$ is the infinite cyclic group generated by $x^s$ and $B$ is the free group with a free basis $b, b^x, \ldots, b^{x^{s-1}}$. Then $K$ is a direct product of (cohomologically) good groups, so is (cohomologically) good. Furthermore $K$ is of type $FP_{\infty}$. By Lemma \ref{Serre2} a) any finite extension of $K$ is  (cohomologically)  good. Note that $K$ and $G$ are residually finite.

2)If $m = 2s+ 1$ then $(ab)^s a = b(ab)^s$. Set $x = ab$, so $x^s x b^{-1} = b x^s$, so $G = \langle x, b \rangle$. Note that $G^{ab} \simeq \Z$ with $\bar{x} = \bar{b}^2$. Set $u = x b^{-2}$, then the relation $x^{s+1} b^{-1} x^{-s} b^{-1} = 1$ can be written as 
$1 =(u b^2)^{s+1} b^{-1} (u b^2)^{-s} b^{-1} =u_0  u_{-2} u_{-4} \ldots u_{-2s} u_{-(2s-1)}^{-1} u_{-(2s-3)}^{-1}$ $ \ldots u_{-1}^{-1} $,
where $u_i = b^{-i} u b^i$. Conjugating with an appropriate power of $b$ we get
$$u_{2s} u_{2s-2} u_{2s-4} \ldots u_0 u_1^{-1} u_3^{-1} \ldots  u_{2s-1}^{-1} = 1$$
This implies that the normal subgroup $N$ generated as a normal subgroup by $u$ is $N = \langle u_0, u_1, \ldots, u_{2s-1} \rangle$ i.e. is finitely generated. Note that as a group $N$ has an infinite presentation $$N = \langle \{ u_i \}_{i \in \Z} \ | \ u_{2s+ i} u_{2s-2+i} \ldots u_i u_{1+i}^{-1} u_{3+i}^{-1} \ldots  u_{2s-1+i}^{-1}  = 1 , i \in \Z \rangle$$ that reduces to
$N = \iffalse{\langle u_0, \ldots, u_{2s} \ | \ u_{2s} u_{2s-2} \ldots u_0 u_1^{-1} u_3^{-1} \ldots  u_{2s-1}^{-1} = 1 \rangle} \simeq}\fi \langle u_0, \ldots, u_{2s- 1} \ | \ \emptyset \rangle$
i.e. $N$ is a free group. Thus $G$ is (finitely generated free)-by-(infinite cyclic) group.
Hence by Lemma \ref{Serre2} b) $G$ is  (cohomologically)  good. By Proposition \ref{Lor1} $G$ is residually finite.

\end{proof}

  \begin{lemma}
  Let $G$ be a poly-(finitely generated free) group. Then $G$ is (cohomologically) good, residually finite and $G \wedge G$ is residually finite.
  \end{lemma}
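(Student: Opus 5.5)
We argue by induction on the length of a subnormal series $1 = G_0 \triangleleft G_1 \triangleleft \cdots \triangleleft G_k = G$ in which each $G_{j}/G_{j-1}$ is a finitely generated free group. The claim to carry along the induction is that each $G_j$ is (cohomologically) good, residually finite, and of type $FP_\infty$ with all $H_i(G_j,\Z)$ finitely generated. The base case $G_1$ is a finitely generated free group. Such a group is good, because it has cohomological dimension one and $\widehat{F}$ is free profinite. It is residually finite, and it has type $FP_\infty$.

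For the inductive step consider $1 \to G_{j-1} \to G_j \to F \to 1$ with $F$ finitely generated free. Write $N = G_{j-1}$; it is finitely generated, since it is an iterated extension of finitely generated groups. We would apply Lemma \ref{Serre2} b) with $N$ and $F$, both good by induction. The remaining hypothesis is that $H^q(N,M)$ is finite for every finite $G_j$-module $M$ and all $q$. This holds because $N$ is of type $FP_\infty$: a resolution by finitely generated free modules gives finitely generated cochain complexes over the finite module $M$. The type $FP_\infty$ of $N$ follows inductively, since extensions of $FP_\infty$ groups by $FP_\infty$ groups are $FP_\infty$. Hence $G_j \in \mathcal{A}_n$ for all $n$, so $G_j$ is good. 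Residual finiteness of $G_j$ follows from Proposition \ref{Lor1}, using that $F$ is good (in particular $F \in \mathcal{A}_2$), that $N$ is finitely generated, and that both $N$ and $F$ are residually finite.

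For $G \wedge G$ we use Corollary \ref{equi2}. We know $G \in \mathcal{A}_2$ and $G$ is residually finite. Moreover $H_2(G,\Z)$ is finitely generated, because $G$ is of type $FP_\infty$, indeed $FP_2$ suffices. The corollary then gives that $G \wedge G$ is residually finite.

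The main obstacle is the finiteness hypothesis on $H^q(N,M)$ in Lemma \ref{Serre2} b). It is handled by carrying the $FP_\infty$ property through the induction, as described above.
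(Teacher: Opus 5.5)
Your proposal is correct and follows the paper's route. The paper also inducts through extensions by finitely generated free groups. It uses Lemma~\ref{Serre2}~b) for goodness, carrying type $FP_\infty$ along to get finiteness of $H^q(N,M)$, then Proposition~\ref{Lor1} for residual finiteness and Corollary~\ref{equi2} for $G \wedge G$. You simply check the hypotheses of each cited result more explicitly than the paper's brief proof does.
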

 
  \begin{proof}
  Every  finitely generated free group is (cohomologically) good.
  An extension of (cohomologically) good groups of type $FP_{\infty}$ is again a  (cohomologically) good group by Theorem \ref{Serre2},b) and is of type $FP_{\infty}$ \cite{Bieribook}. By Proposition  \ref{Lor1}  $G$ is residually finite.  Since  $G$ is of type $FP_{\infty}$, we have $H_2(G, \Z)$ is finitely generated,    hence  by Corollary \ref{equi2} $G \wedge G$ is residually finite.
  \end{proof}

    \begin{lemma}
a)   Let $B_n$ be the  braid group on $n$ strings  and $P_n$ be the pure braid group on $n$ strings i.e. $B_n$ is an aspherical Artin group of type $A$. Then both $P_n$, $B_n$ are (cohomologically) good, in particular $B_n \wedge B_n$ and $P_n \wedge P_n$ are residually finite.  Furthermore $B_n \otimes B_n$ and $P_n \otimes P_n$ are residually finite.

b) Let $M$ be a compact, connected surface without boundary, and different
from $S^2$ and $RP^2$ and  $G = P_n(M )$. Then $G$ is (cohomologically) good, $FP_{\infty}$ and residually finite. In particular $G \wedge G$ is residually finite.
  \end{lemma}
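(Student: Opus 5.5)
The plan is to show that $P_n$ is poly-(finitely generated free) and then use the previous lemma. The Fadell--Neuwirth fibrations
$$1 \to F_{n-1} \to P_n \to P_{n-1} \to 1,$$
obtained by forgetting the last strand, exhibit $P_n$ as an iterated extension of finitely generated free groups; indeed $P_n$ is an iterated semidirect product $F_{n-1} \rtimes \cdots \rtimes F_1$. By the previous lemma, $P_n$ is (cohomologically) good, residually finite and of type $FP_{\infty}$, and $P_n \wedge P_n$ is residually finite.

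Next pass to $B_n$. Since $P_n$ has finite index $n!$ in $B_n$, Lemma \ref{Serre2} a) gives that $B_n$ is (cohomologically) good. Residual finiteness passes to finite-index overgroups, and $FP_{\infty}$ is inherited as well, so $H_2(B_n,\Z)$ is finitely generated. Corollary \ref{equi2} then gives that $B_n \wedge B_n$ is residually finite.

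For the tensor squares we use Theorem \ref{old1} d). We have $P_n^{ab} \simeq \Z^{n(n-1)/2}$, which is finitely generated and has no 2-torsion, so $P_n \otimes P_n$ is residually finite. Also $B_n^{ab} \simeq \Z$ for $n \geq 2$, which is torsion-free, so $B_n \otimes B_n$ is residually finite. (The case $n=1$ is trivial.)

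For b), for closed $M \neq S^2, \mathbb{RP}^2$ the Fadell--Neuwirth fibrations of configuration spaces $F_n(M) \to F_{n-1}(M)$ have aspherical fibre $M$ minus $n-1$ points. They give short exact sequences
$$1 \to \pi_1(M \setminus \{n-1 \hbox{ points}\}) \to P_n(M) \to P_{n-1}(M) \to 1,$$
with finitely generated free kernel for $n \geq 2$ and base case $P_1(M) = \pi_1(M)$. Exactness on the left is where the hypothesis on $M$ is used: $\pi_2 = 0$ of the base for these surfaces. The base $\pi_1(M)$ is a surface group. It is good (for instance, it is virtually an extension of free groups, or one can quote Grunewald--Jaikin-Zapirain--Zalesskii, since orientable surface groups are limit groups and the non-orientable ones are virtually such), residually finite, and $FP_{\infty}$.

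Inducting with Lemma \ref{Serre2} b), we get that $P_n(M)$ is good. The finiteness hypothesis of that lemma holds because the free kernel is $FP_{\infty}$, so its cohomology with finite coefficients is finite. Type $FP_{\infty}$ follows by extensions, and residual finiteness by Proposition \ref{Lor1}, using that the base is in $\mathcal{A}_2$. Corollary \ref{equi2} then yields that $G \wedge G$ is residually finite.

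The main obstacle is the base case of b): justifying that the surface group $\pi_1(M)$, including the torus, Klein bottle and non-orientable cases, is good. I would handle it by citing limit groups for orientable genus $\geq 2$, treating the torus and Klein bottle directly as poly-$\Z$ groups, and passing to the orientable double cover for the non-orientable surfaces via Lemma \ref{Serre2} a).
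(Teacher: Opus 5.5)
Your proposal is correct and takes essentially the same route as the paper. It combs $P_n$ into an iterated extension of finitely generated free groups, passes to $B_n$ by finite index, and uses Theorem \ref{old1} d) for the tensor squares. In b) it builds $P_n(M)$ from $\pi_1(M)$ by extensions with finitely generated free kernels: the paper quotes the Goncalves--Guaschi semidirect product decomposition, while your possibly non-split Fadell--Neuwirth sequences suffice, since Lemma \ref{Serre2} b) needs no splitting.

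Your separate handling of the Klein bottle, and of non-orientable $M$ via the orientable double cover, is more careful than the paper. The paper asserts that every such $\pi_1(M)$ is a limit group, which is false for the Klein bottle and for the non-orientable surface of genus $3$. Do drop your parenthetical that hyperbolic surface groups are virtually extensions of free groups: it fails when the free kernel must be finitely generated, which is what Lemma \ref{Serre2} b) needs.
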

  
  \begin{proof}
a) By \cite{Artin} $P_n$ can be decomposed as itterated split extension of finitely generated free groups i.e. $P_n \simeq F_{n-1} \rtimes P_{n-1}$ with $P_2 = \Z$. By the previous lemma $P_n$ is (cohomologically) good, residually finite  and $P_n \wedge P_n$ is residually finite. The fact that $B_n$ is (cohomologically) good was earlier proved in \cite{G-J-Z}.  Note that $P_n$ has finite index in $B_n$, hence by   Lemma \ref{Serre2},b) we get a new proof that   $B_n$  is (cohomologically) good and   by Lemma \ref{Lor1} $B_n$ is residually finite. Alternatively $B_n$ is spherical ( finite type) Artin group, hence $B_n$ is  a linear group, hence residually finite. Thus  by Corollary \ref{equi2} $B_n \wedge B_n$ is residually finite.

We note that by the first chapter of \cite{Ka-Tu} we have presentations of $B_n$ and $P_n$ in terms of generators and relations that show that the abelianizations of both groups are finitely generated, torsion-free. Hence  by Theorem \ref{old1}, d)  $B_n \otimes B_n$ and $P_n \otimes P_n$ are residually finite.

b)
By  \cite[Thm. 5]{Go-Gu} $G$ is isomorphic to a repeated semi-direct product of (fin. generated)  free groups
and a single one-relator group that is isomorphic to $\pi_1(M)$. Note that $\pi_1(M)$ is a surface group, hence is  $FP_{\infty}$ and a limit group over free group, hence by \cite{G-J-Z}  is (cohomologically) good. As in the previous results using that finitely generated free groups are (cohomologically) good and $FP_{\infty}$ we conclude that $G$ is (cohomologically) good and $FP_{\infty}$
  and residually finite.  Then we can apply Corollary \ref{equi2}.
\end{proof}

 Following \cite{G-J-Z} if $G$ is the fundamental group of a finite graph of groups the profinite topology of $G$ is efficient if 
 
  1) $G$ is residually finite;
 
 2) the restriction of the profinite topology  of $G$  on each vertex $G_v$   is the full profinite topology of $G_v$ and for each edge group $G_e$,  where $o(e)$ is the beginning and $\tau(e)$ the end of $e$,  considering the structure injective maps the inclusion $i_e: G_e \to G_{o(e)}$ and $\phi_e : G_e \to G_{\tau(e)}$, the restriction of the profinite topology of $G_{o(e)}$ on $i_e(G_e)$ and  the restriction of the profinite topology of $G_{\tau(e)}$ on $\phi_e(G_e)$ is the full profinite topology of $i_e(G_e)$ and $\phi_e(G_e)$ respectively.

3) all vertex groups $G_v$ and edge groups $G_e$ are separable (=closed in the profinite topology i.e. intersection of finite index subgroups of $G$).

\begin{prop} \label{good-short} \cite[Prop. 3.6]{G-J-Z}
Let $G$ be the fundamental group of a finite graph of groups with one edge (i.e. profinite amalgamated product or profinite HNN-extension) such that  the profinite topology of $G$ is efficient. Then if all vertex and edge groups are  (cohomologically) good   then $G$ is (cohomologically) good.
\end{prop}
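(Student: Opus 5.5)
The plan is to use the criterion of \cite[Lemma 3.1]{G-J-Z}, the same one invoked in the proof of Lemma \ref{Pavel}. Put $\mathcal{U}$ for the set of normal subgroups of finite index in $G$. It then suffices to show: for every $i\geq 1$, every finite $G$-module $A$ (on which some $U\in\mathcal U$ acts trivially), every $U\in\mathcal U$ and every class $c\in H^i(U,A)$, there is $V\in\mathcal U$ with $V\le U$ such that $\operatorname{res}^U_V(c)=0$. In other words, $\varinjlim_{U}H^i(U,A)=0$. This route avoids having to identify $\widehat G$ with the profinite amalgam or HNN extension of the completions and to prove that the corresponding profinite graph is a tree. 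The plan is to work entirely with discrete Mayer--Vietoris sequences and use efficiency only to make subgroups of vertex and edge groups shrink cofinally.

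\textbf{Step 1: description of $U$.} By Bass--Serre theory, $U$ acts on the Bass--Serre tree $T$ of $G$ with finite quotient graph $Y_U=U\backslash T$. Hence $U$ is the fundamental group of a finite graph of groups whose vertex and edge groups are the groups $U\cap G_v^{g}$ and $U\cap G_e^{g}$. These are finite index subgroups of conjugates of the vertex and edge groups of $G$. The action of $U$ on $T$ gives the exact sequence of $\Z U$-modules
$$0\to \bigoplus_{e\in E(Y_U)}\Z[U/U_e]\to\bigoplus_{v\in V(Y_U)}\Z[U/U_v]\to \Z\to 0 .$$
Applying Shapiro's lemma to it yields a long exact Mayer--Vietoris sequence
$$\cdots\to \bigoplus_e H^{i-1}(U_e,A)\xrightarrow{\ \delta\ } H^i(U,A)\xrightarrow{\ \rho\ }\bigoplus_v H^i(U_v,A)\to\bigoplus_e H^i(U_e,A)\to\cdots,$$
and this sequence is natural with respect to restriction to $V\le U$.

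\textbf{Step 2: killing vertex components.} Efficiency conditions 2) and 3) say that the topology induced from $G$ on each $G_v$ and $G_e$ is its full profinite topology. Consequently, the groups $V\cap G_v^g$ with $V\in\mathcal U$ form a cofinal family of finite index subgroups of $G_v^g$, and likewise for the edge groups. Since $G_v$ is good, so is every finite index subgroup, by Lemma \ref{Serre2} a). Applying \cite[Lemma 3.1]{G-J-Z} in the converse direction, every class in $H^i(U_v,A)$ with $i\ge1$ dies on some finite index subgroup of $U_v$. There are only finitely many vertices in $Y_U$, so one can choose $V_1\le U$ in $\mathcal U$ with $\rho(c)|_{V_1}=0$. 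For this one checks that the vertex groups of $V_1$ are conjugates of subgroups of the restrictions $V_1\cap U_v^{u}$, which holds by naturality of the sequence. Hence $c|_{V_1}=\delta(d)$ for some $d\in\bigoplus_e H^{i-1}(V_{1,e},A)$.

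\textbf{Step 3: killing the edge part.} If $i-1\ge1$, repeat the argument with edge groups: choose $V_2\le V_1$ with $d|_{V_2}=0$, so that $c|_{V_2}=0$.

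\textbf{Main obstacle: the case $i=1$.} Here $d$ lies in $\bigoplus_e H^0$, and the image of $\delta$ contains the contribution of $H^1$ of the finite graph $Y_{V_1}$, that is, of the free quotient $\pi_1(Y_{V_1})$. I would handle this directly. For $i=1$, goodness in degree $1$ amounts to showing that a homomorphism (or crossed homomorphism) $U\to A$ vanishes on a deeper $V$. This holds simply because its kernel has finite index, so one may take $V$ normal in $G$ contained in that kernel. Thus the case $i=1$ is trivial, and in degrees $i\ge2$ all terms involved are positive-degree cohomology of vertex and edge groups.

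The remaining technical point I expect to need care with is the bookkeeping in Step 2. Restricting from $U$ to $V$ changes the quotient graph, because vertices of $Y_U$ split into several vertices of $Y_V$. One must verify that the restriction map between the vertex terms of the two Mayer--Vietoris sequences is the expected sum of restrictions to the subgroups $V\cap U_v^{u}$. This follows from the naturality of the Shapiro isomorphism together with the Mackey decomposition of $\Z[U/U_v]$ restricted to $V$. After this, the three steps combine to give $\varinjlim_U H^i(U,A)=0$ for all $i\ge1$, and hence $G$ is good.
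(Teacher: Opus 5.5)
Your proof is correct. It takes a genuinely different route from the one behind this statement; the paper gives no proof and only quotes \cite[Prop. 3.6]{G-J-Z}. The argument there works on the profinite side. Efficiency identifies $\widehat G$ with the proper profinite amalgamated product or HNN extension of the completed vertex groups over the completed edge group. The action of $\widehat G$ on the associated profinite tree gives a profinite Mayer--Vietoris sequence, restriction maps it to the discrete one, and the five lemma gives $H^i(\widehat G,A)\cong H^i(G,A)$ directly.

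You never leave discrete cohomology. You use the effaceability criterion and kill a class $c\in H^i(U,A)$, $i\ge 2$, in two passes: first on the vertex terms (degree $i$), then on the edge preimage $d$ (degree $i-1\ge 1$). The case $i=1$ is handled separately, correctly, since every group lies in $\mathcal A_1$.

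The bookkeeping you worry about is harmless. Take $V\le U$ normal in $G$ and $u\in U$. The $V$-stabilizer of $u\tilde v$ is $u(V\cap U_{\tilde v})u^{-1}$, and conjugation by $u$ acts trivially on $H^*(U,A)$. So it suffices to kill $\operatorname{res}^U_{U_{\tilde v}}(c)$ on $V\cap U_{\tilde v}$ for one representative of each of the finitely many $U$-orbits. Also, $\delta$ commutes with restriction because both sequences are the long exact $\operatorname{Ext}$-sequences of the same short exact sequence $0\to\Z[E(T)]\to\Z[V(T)]\to\Z\to 0$.

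Your route buys three things:
\begin{itemize}
\item No profinite Bass--Serre theory is needed.
\item Only condition 2) of efficiency is used, since full induced topology on vertex groups passes to edge groups by transitivity. Your appeal to condition 3), separability, is superfluous, and so is residual finiteness.
\item The argument applies verbatim to any finite graph of groups, not just one edge.
\end{itemize}
The five-lemma route buys the structural description of $\widehat G$ and its Mayer--Vietoris sequence. It also gives the isomorphism in each degree without the dimension-shifting argument hidden inside the effaceability criterion.
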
 

  Let $\Gamma$ be a simplicial finite graph and  for each vertex $v \in V(\Gamma)$ there is a group $G_v$. The  associated graph product of groups  is a group that  has the presentation
$$G = \langle \cup_v G_v \ | \ [G_v, G_w] = 1 \hbox{ if v,w are linked by an edge in } \Gamma \rangle $$
By \cite{Green} 

1) if each $G_v$ is residually finite then $G$ is residually finite, 

 2) each full subgroup of $G$ ( i.e. subgroup generated by some of the vertex groups) is separable in $G$ (i.e. closed in the profinite topology of $G$)

Alternative proofs of both results described above can be found in \cite{Hsu-Wise}. 

Note that by \cite{B-E} free product with amalgamation $G = G_1 *_{G_0} G_2$, where the amalgamated subgroup $G_0$ is a retract in both subgroups $G_1$ and $G_2$  and $G_1, G_2$ are residually finite, is  residually finite. 

\begin{prop}   Let $\Gamma$ be a simplicial finite graph and  $G$ be the graph product of the groups $G_v$, where $v \in V(\Gamma)$.
 If  $G_v \in \mathcal{A}_n$ for every $v \in V(\Gamma)$ then $G \in \mathcal{A}_n$.
\end{prop}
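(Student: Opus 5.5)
We argue by induction on the number of vertices of $\Gamma$. If $\Gamma$ has one vertex, $G = G_v \in \mathcal{A}_n$ and there is nothing to prove. Otherwise:

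\emph{Case 1: $\Gamma$ is complete.} Then $G$ is the direct product of the $G_v$. We decompose it as $G = G_{v_0} \times G'$, where $G'$ is the graph product over $\Gamma \setminus \{v_0\}$; by induction $G' \in \mathcal{A}_n$.

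\emph{Case 2: $\Gamma$ is not complete.} Choose a vertex $v$ not adjacent to every other vertex. Let $\mathrm{lk}(v)$ be its link and $\mathrm{st}(v) = \{v\} \cup \mathrm{lk}(v)$ its star. Then
$$G = G_{\Gamma \setminus \{v\}} *_{G_{\mathrm{lk}(v)}} G_{\mathrm{st}(v)},$$
with $G_{\mathrm{st}(v)} = G_v \times G_{\mathrm{lk}(v)}$. Both factors and the amalgamated subgroup are graph products over proper full subgraphs, so by induction (combined with the direct product statement for the star) they lie in $\mathcal{A}_n$.

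The two decompositions call for different tools. For direct products, $A\times B$ with $A,B \in \mathcal{A}_n$, we use the characterisation c) of Theorem \ref{surprise}: it suffices to check that the maps with trivial coefficients $\mathbb{F}_p$ are isomorphisms for a cofinal family of subgroups $U_1\times U_2$. Since $\widehat{A\times B} = \widehat A\times\widehat B$, this follows by comparing the Künneth formulas over the field $\mathbb{F}_p$ for discrete and profinite cohomology; these agree in degrees $\le n$, because the degree-$\le n$ terms only involve the factors in degrees $\le n$. For amalgams, we invoke the Mayer--Vietoris sequence. Proposition \ref{good-short} is stated for full goodness, but its proof compares the discrete and profinite Mayer--Vietoris sequences via the five lemma, and truncates to $\mathcal{A}_n$: the comparison in degree $i\le n$ uses the vertex groups in degrees $i-1,i$ and the edge group in degrees $i-1, i$. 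Hence the same argument gives $G \in \mathcal{A}_n$.

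The main obstacle is efficiency of the profinite topology for this amalgam, which the exactness of the profinite Mayer--Vietoris sequence requires. We handle it as follows.
\begin{itemize}
\item[(1)] \emph{Separability of the vertex and edge groups.} Full subgroups are retracts of $G$, via the map killing the other vertex groups, so they are closed whenever $G$ is residually finite. Without assuming residual finiteness, we pass to $G/\bigcap$ of finite-index subgroups, or we work directly with closures: the retraction argument shows that the profinite completion of a full subgroup injects into $\widehat G$.
\item[(2)] \emph{Induced topologies.} Because $G_{\mathrm{lk}(v)}$ is a retract of both factors, and the factors are retracts of $G$, the profinite topology of $G$ induces the full profinite topology on each of them.
\end{itemize}
With retractions in hand, the profinite graph of groups is injective and the completion is the profinite amalgam. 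This is the standard fact used in \cite{G-J-Z}. The exactness of the profinite Mayer--Vietoris sequence, and therefore the five-lemma comparison, then goes through, which completes the induction.

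If residual finiteness of the $G_v$ is genuinely needed here, we fall back on it via \cite{Green}, noting that the retraction argument is exactly what makes it dispensable.
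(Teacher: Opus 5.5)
\textbf{Gap: the direct product step.} As stated, this step cannot be closed. For profinite groups the Künneth formula $H^k(\widehat{A}\times\widehat{B},\mathbb{F}_p)\cong\bigoplus_{i+j=k}H^i(\widehat{A},\mathbb{F}_p)\otimes H^j(\widehat{B},\mathbb{F}_p)$ always holds, since it is a direct limit of the finite case. For discrete groups you only have $H^k(A\times B,\mathbb{F}_p)\cong\bigl(\bigoplus_{i+j=k}H_i(A,\mathbb{F}_p)\otimes H_j(B,\mathbb{F}_p)\bigr)^*$. But $(V\otimes W)^*$ is strictly larger than $V^*\otimes W^*$ when $V$ and $W$ are both infinite-dimensional, and restricting to degrees $\le n$ does nothing about this.

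\emph{Counterexample.} Let $F$ be free of countably infinite rank. It is residually finite and good, because $\widehat{F}$ is projective. Then $H^2(\widehat{F}\times\widehat{F},\mathbb{F}_p)=H^1(F,\mathbb{F}_p)\otimes H^1(F,\mathbb{F}_p)$ maps onto the finite-rank bilinear forms on $H_1(F,\mathbb{F}_p)=\mathbb{F}_p^{(\mathbb{N})}$. On the other hand, $H^2(F\times F,\mathbb{F}_p)$ is the space of all bilinear forms. So $F\times F\notin\mathcal{A}_2$, and the proposition already fails for the graph with one edge. The paper calls the complete-graph case ``obvious'', so it has the same gap.

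\emph{Repair.} Your argument becomes correct once the $G_v$ are assumed finitely generated. Then $U_1$ and $U_2$ are finitely generated, so $\widehat{U_j}$ has only countably many open subgroups and $H^i(\widehat{U_j},\mathbb{F}_p)$ is countable. Meanwhile $H^i(U_j,\mathbb{F}_p)=H_i(U_j,\mathbb{F}_p)^*$ is uncountable unless $H_i(U_j,\mathbb{F}_p)$ is finite. Thus $U_j\in\mathcal{A}_n$ forces $H_i(U_j,\mathbb{F}_p)$ to be finite for $i\le n$. The discrete Künneth formula then holds in those degrees, and your comparison goes through.

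\textbf{Comparison with the paper.} Apart from this, your proof is the paper's: induction on vertices, an amalgam along retracts, and the Mayer--Vietoris/five-lemma comparison behind Proposition \ref{good-short}. The paper differs in three ways:
\begin{itemize}
\item it splits along two non-adjacent vertices $v,w$ as $G_{\Gamma\setminus v}*_{G_{\Gamma\setminus\{v,w\}}}G_{\Gamma\setminus w}$, rather than along a star and a link;
\item it applies Proposition \ref{good-short} without remarking that the argument truncates to $\mathcal{A}_n$;
\item it gets efficiency from Green's theorem, which tacitly requires the $G_v$ to be residually finite.
\end{itemize}
Your point about the third item is correct: the retractions already make the profinite amalgam proper. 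That is all the exactness of the profinite Mayer--Vietoris sequence and the comparison need, so the extra hypothesis is unnecessary. You should therefore drop the closing fallback to residual finiteness of the $G_v$. Also drop the suggestion to pass to $G/\bigcap$ of finite-index subgroups, since that changes the discrete cohomology.
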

 
 \begin{proof} 
If $\Gamma$ is the complete graph on $V(\Gamma)$ then $G = \prod_v G_v$ and the result is  obvious. We use induction on the number of vertices in the graph.

If $\Gamma$ is not the complete graph then there are vertices $v$ and $w$ that are not linked by an edge. Let $G_1$, $G_2$ and $G_0$ be the graph products   based on the full subgraphs  of $\Gamma$  spanned by $V(\Gamma) \setminus \{ v \}$, $V(\Gamma) \setminus \{ w \}$ and $V(\Gamma) \setminus \{ v, w \}$ respectively. Then $G = G_1 *_{G_0} G_2$. Note that  $G_0$ is a retract of $G_1$ and of $G_2$ and $G_1$ and $G_2$ are retracts of $G$. 
Hence the profinite topology of $G$ induces the profinite topology of each of the groups $G_i$ for $i = 0,1,2$.   We already commented that by  \cite{Green},\cite{Hsu-Wise} $G$ is residually finite and full subgroups are separable, in particular $G_i$ for $i = 0,1,2$ are separable subgroups of $G$. Then the decomposition $G = G_1 *_{G_0} G_2$ is efficient and we can apply Proposition \ref{good-short}. 
\end{proof}

We recall that a group is LERF if any finitely generated subgroup is closed in the profinite topology. 
LERF Artin  groups are  classified in  \cite{A-L} as follows.
A group $G$ is a LERF Artin group if and only if $G \in A(S)$ where $A(S)$ is the smallest class of groups such that:

1) all Artin groups with maximum 2 generators belongs to $A(S)$;

2) if $G_1, G_2 \in A(S)$ then $G_1 * G_2 \in A(S)$;

3) if $G \in A(S)$ then $G \times \mathbb{Z} \in A(S)$.

\begin{prop} Let $G$ be an Artin group that is LERF. Then

a) $G$ is  residually finite  and of type $FP_{\infty}$,

b) $G$ is (cohomologically) good and finitely presented, in particular  $H_2(G, \Z) \to H_2( \widehat{G}, \widehat{\mathbb{Z}})$ is injective,

c) $G \wedge G$   and $G \otimes G$ are residually finite.
\end{prop}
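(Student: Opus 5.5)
We argue by structural induction on the class $A(S)$ described above, which by \cite{A-L} coincides with the class of LERF Artin groups. For each $G \in A(S)$ we prove simultaneously that $G$ is residually finite, (cohomologically) good, of type $FP_{\infty}$ and finitely presented. The base case is item 1). Artin groups with at most two generators are $\Z$, $\Z^2$, the free group of rank $2$, or dihedral Artin groups. All of these have the four properties, the dihedral case being exactly Lemma \ref{dihedral}; finite presentability is clear from the one-relator presentations.

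The inductive steps are as follows.

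\begin{itemize}
\item \emph{Direct product with $\Z$} (item 3). The group $G \times \Z$ is an extension $1 \to \Z \to G \times \Z \to G \to 1$. Since $\Z$ is finitely generated, $FP_{\infty}$ and good, with finite cohomology in finite modules, Lemma \ref{Serre2} b) gives goodness. Type $FP_{\infty}$ and finite presentability pass to extensions. Residual finiteness of a direct product is immediate.
\item \emph{Free product} $G_1 * G_2$ (item 2). This is a graph product over a graph with two vertices and no edge. The preceding proposition on graph products then gives $G_1 * G_2 \in \mathcal{A}_n$ for all $n$, i.e.\ goodness; alternatively one can use Proposition \ref{good-short} with trivial edge group. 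Residual finiteness follows from \cite{Green}. Type $FP_{\infty}$ and finite presentability are standard for free products, e.g.\ by the Mayer--Vietoris sequence \cite{Bieribook}.
\end{itemize}

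This establishes a) and the first half of b). Since $G$ is $FP_{\infty}$, the group $H_2(G,\Z)$ is finitely generated. Corollary \ref{equi2} then gives injectivity of $H_2(G,\Z) \to H_2(\widehat{G},\widehat{\Z})$ and residual finiteness of $G \wedge G$.

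For $G \otimes G$, we use Theorem \ref{old1} d): it suffices that $G^{ab}$ be finitely generated without $2$-torsion. From the Artin presentation, $G^{ab}$ is the free abelian group on the vertices modulo $a = b$ for each edge with odd label. Hence $G^{ab} \cong \Z^k$, where $k$ is the number of components of the graph obtained by keeping only the odd-labelled edges. This is torsion-free, so $G \otimes G \simeq \Delta(G) \times (G \wedge G)$. The factor $\Delta(G)$ is finitely generated abelian, and $G \wedge G$ is residually finite, so $G \otimes G$ is residually finite.

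The only point needing care is the goodness of free products. One must check that the free product decomposition is efficient, i.e.\ that the factors are separable and carry their full profinite topology. This holds because the factors are retracts of the free product and the free product is residually finite. The rest is bookkeeping via the earlier cited results.
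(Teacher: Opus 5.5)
Your proof is correct and follows essentially the same route as the paper: structural induction over $A(S)$, with Lemma \ref{dihedral} for the base case, Proposition \ref{good-short} for free products and preservation of goodness under $G \mapsto G\times\Z$, then Corollary \ref{equi2} for injectivity and $G\wedge G$, and torsion-freeness of $G^{ab}$ for $G\otimes G$. The only minor differences are that you check efficiency of the free product decomposition via retracts and Lemma \ref{retract10} where the paper cites \cite[Lemma 4.8]{Reid}, and that you compute $G^{ab}$ explicitly where the paper simply asserts that it is torsion-free.
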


\begin{proof}
a) If an Artin group has at most two generators then by Lemma \ref{dihedral}   it is residually finite  and is of type $FP_{\infty}$. If $G$ is residually finite then $G \times \mathbb{Z}$ is residually finite  and is of type $FP_{\infty}$.  If $G_1$ and $G_2$ are residually finite and of type $FP_{\infty}$  then $G_1 * G_2$ is residually finite  and is of type $FP_{\infty}$.

b)  Note that by \cite[Lemma 4.8]{Reid} the decomposition $G_1 * G_2$ of residually finite groups $G_1$ and $G_2$ is efficient. By Proposition \ref{good-short}   if $G_1$ and $G_2$ are  (cohomologically)   good  then $G_1 * G_2$ is (cohomologically) good.
Furthermore if $G$ is finitely generated  and  (cohomologically) good, then $G \times \Z$ is  (cohomologically)   good. Since $G$ is finitely presented (or by using  that $G$ is of type $FP_{\infty}$), then its second homology with integral coefficients is finitely generated and we can apply Corollary \ref{equi2}.

c) follows from  Corollary \ref{equi2} and the fact that $G^{ab}$ is torsion-free.
\end{proof}

 Recall that an Artin group is called even if all labels of the corresponding graph are even. We note that if $G = G_{\Gamma}$ is an even Artin group and $\Gamma_1$ is a subgraph of $\Gamma$ then there is an epimorphism of groups $G \to G_{\Gamma_1}$ that is the identity on the vertices in $\Gamma_1$ and sends all vertices outside $\Gamma_1$ to the identity.  Thus $G_{\Gamma_1}$ is a retract of $G$.

Recall that a 4-clique is a subgraph of a graph consisting of four vertices, where every vertex is connected to every other vertex by an edge.

\begin{prop}
a) Let $\Gamma$ be a finite labeled graph with all even labels that does not contain a 4-clique. Then for the associated Artin group $G = G_{\Gamma}$ we have  that   $G \otimes G$ and $G \wedge G$ are residually finite.

b) Let $G$ be  the fundamental group $G$ of an algebraically clean graph of free groups  i.e. edge groups are free factors in the vertex groups. Then  $G$ is $FP_{\infty}$ and  $G \wedge G$ is residually finite. 
\end{prop}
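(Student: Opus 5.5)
For part a) I will use what is already known about these groups. By Jankiewicz and Schreve \cite{J-S}, an even Artin group $G = G_\Gamma$ whose graph has no 4-clique is residually finite and (cohomologically) good. By Theorem \ref{surprise} this puts $G$ in $\mathcal{A}_2 = \mathcal{A}_2^{hom}$. The group $G$ is finitely presented, with one generator per vertex and one relator per edge, so its presentation complex shows that $H_2(G,\Z)$ is finitely generated. Corollary \ref{equi2} then gives that $H_2(G,\Z)\to H_2(\widehat G,\widehat\Z)$ is injective and that $G\wedge G$ is residually finite.

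For $G\otimes G$, I compute the abelianization. Every label is even, so each relation $w(a,b)=v(a,b)$ has length $2s$ and abelianizes to $s\bar a+s\bar b=s\bar a+s\bar b$, which is trivial. Hence $G^{ab}\simeq \Z^{V(\Gamma)}$ is finitely generated and free abelian, with no 2-torsion. Theorem \ref{old1} d) then shows that $G\otimes G$ is residually finite. This mirrors the argument for $B_n$ and for LERF Artin groups.

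For part b), let $G$ be the fundamental group of a finite algebraically clean graph of finitely generated free groups.

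\emph{Type $FP_\infty$.} Each vertex and edge group is finitely generated free, hence of type $FP_\infty$. The Mayer–Vietoris sequence for graphs of groups (or the standard result in \cite{Bieribook}) shows that a finite graph of groups with $FP_\infty$ vertex and edge groups has $FP_\infty$ fundamental group. In particular $H_2(G,\Z)$ is finitely generated.

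\emph{Residual finiteness and goodness.} These are the substantive inputs, and I expect the efficiency of the profinite topology to be the main obstacle. Wise proved that algebraically clean graphs of free groups are virtually special, in particular residually finite. I would cite \cite{G-J-Z}, or rather the more general statement there about efficient graphs of good groups, for the fact that the profinite topology is efficient: vertex groups are separable, and the full profinite topology is induced on vertex and edge groups. The edge groups being free factors helps here, since a free factor is a retract of its vertex group, so every finite quotient of the edge group extends to a finite quotient of the vertex group. Combining this with separability from virtual specialness gives efficiency.

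\emph{Goodness by induction.} Proposition \ref{good-short} handles one edge at a time. I would induct on the number of edges, each time splitting $G$ as an amalgam or HNN extension over a single edge group, whose vertex groups are fundamental groups of smaller algebraically clean graphs of free groups. This requires efficiency at every stage of the splitting, and the inductive bookkeeping is where care is needed; alternatively I would cite the goodness result for efficient finite graphs of good groups in \cite{G-J-Z} directly.

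\emph{Conclusion.} Once $G$ is residually finite and good, $G\in\mathcal{A}_2$ by Theorem \ref{surprise}. Since $H_2(G,\Z)$ is finitely generated, Corollary \ref{equi2} shows that $G\wedge G$ is residually finite.
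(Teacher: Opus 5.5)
Your part a) is correct and is the paper's argument. It uses \cite[Thm.~1.5]{J-S} for residual finiteness and goodness, Corollary~\ref{equi2} for $G\wedge G$, and the torsion-free abelianization with Theorem~\ref{old1}~d) for $G\otimes G$. In b), your $FP_\infty$ argument and your final step (finitely generated $H_2$, then Corollary~\ref{equi2}) also agree with the paper.

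The gap is in b): you never actually establish that $G$ is good. You route goodness through efficiency and Proposition~\ref{good-short}, but efficiency is not obtained.
\begin{itemize}
\item \cite{G-J-Z} deduces goodness \emph{from} efficiency and verifies efficiency only for specific hierarchies. It contains nothing about algebraically clean graphs of free groups, so it cannot be cited for this.
\item Wise's theorem gives residual finiteness, not virtual specialness. Virtual specialness in fact fails in this generality. A free-by-cyclic group $F_n\rtimes\Z$ is an algebraically clean HNN extension, with edge group equal to the vertex group. Gersten's free-by-cyclic example is not even a subgroup of a CAT(0) group, so it is not virtually special. Hence ``separability from virtual specialness'' is unavailable.
\item Your retract observation only shows that $G_{o(e)}$ induces the full profinite topology on $G_e$. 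Efficiency also requires that $G$ induce the full profinite topology on each vertex group. Separability would not give this even if you had it, and you do not address it.
\item In your edge-by-edge induction the vertex groups of each one-edge splitting are fundamental groups of subgraphs. For these you need the same full-topology and separability conditions. Moreover $G_{o(e)}$ is in general not a retract of such a subgraph group, so the retract trick does not propagate.
\end{itemize}
Citing ``goodness for efficient finite graphs of good groups'' directly does not help, since it presupposes exactly this missing efficiency.

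The missing ingredient is simply the main result of Jankiewicz--Schreve, \cite[Thm.~1.1, Thm.~1.2]{J-S}: fundamental groups of algebraically clean graphs of free groups are good and residually finite. This is the paper you already use in a), and the paper's proof of b) cites it. Replace your efficiency sketch with that citation and your argument is complete.
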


\begin{proof}
a) 
  By \cite[Thm. 1.5]{J-S} $G$ is residually finite and (cohomologically) good. By Corollary \ref{equi2}  $G \wedge G$ is residually finite.
Since $G^{ab}$ is torsion-free  $G \otimes G$ is residually finite if $G \wedge G$ is residually finite.

b) By \cite[Thm 1.1, Thm 1.2]{J-S} the fundamental group $G$ of an algebraically clean graph of free groups is
  (cohomologically) good  and residually finite.  
   By  Corollary \ref{equi2}  $G \wedge G$ is residually finite.
  
  Note that if $G$ is the fundamental group of a graph of groups with vertex groups all of  type $FP_m$ and edge groups of type $FP_{m-1}$ then $G$ is of type $FP_m$ \cite{Brown-cr}.  In particular $G$ is $FP_{\infty}$.
  
\end{proof}

  The following fact is well known but as we could not find a reference we include a short proof for completeness.
 
 \begin{lemma} \label{retract10}  Let $G$ be a residually finite group, $H$ a subgroup of $G$ that is a retract. Then $H$ is closed in the profinite topology of $G$ i.e. $H$ is separable in $G$.
 \end{lemma}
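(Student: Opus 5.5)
Let $\rho: G \to H$ be a retraction, so $\rho$ is a homomorphism with $\rho|_H = \mathrm{id}_H$. The plan is to show that every $g \in G \setminus H$ can be separated from $H$ by a finite quotient of $G$. Then $H$ is an intersection of finite index subgroups, i.e. closed in the profinite topology.

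The key observation is that $H$ is exactly the set $\{ g \in G \mid g = \rho(g)\}$. Indeed, if $g\in H$ then $\rho(g)=g$. Conversely, $\rho(g)\in H$ always, so $g=\rho(g)$ forces $g \in H$.

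Now fix $g \notin H$, so $x := g^{-1}\rho(g) \neq 1$. Since $G$ is residually finite, there is a normal subgroup $N$ of finite index in $G$ with $x \notin N$. I would then set
$$K = H N,$$
which is a subgroup because $N$ is normal, and has finite index because it contains $N$. It remains to check $g \notin K$. Suppose $g = hn$ with $h\in H$ and $n \in N$. Applying $\rho$ gives $\rho(g) = h\rho(n)$, so
$$g^{-1}\rho(g) = n^{-1}h^{-1}h\rho(n) = n^{-1}\rho(n).$$
So it suffices that $\rho(n)\in N$.

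This is the only real obstacle: $\rho(N)$ need not lie in $N$. I would fix it by replacing $N$ with $N' = N \cap \rho^{-1}(N)$. This is still normal of finite index: $\rho^{-1}(N)$ is normal of finite index since $G/\rho^{-1}(N)$ embeds in $G/N$. It still excludes $x$, and now $\rho(N')\subseteq N$.

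With $K = HN'$ and $g = hn$ for $n\in N'$, we get $x = n^{-1}\rho(n) \in N$, a contradiction. Hence $g \notin K$, and $H = \bigcap K$ over such finite index subgroups, so $H$ is separable in $G$.
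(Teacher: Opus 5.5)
Your proof is correct and is essentially the paper's argument. You separate $g$ from $H$ by $HN'$ with $N' = N \cap \rho^{-1}(N)$, which is the paper's $M = \rho^{-1}(N\cap H)\cap N$, and you use the nontrivial kernel element $g^{-1}\rho(g)$ where the paper uses $k = g\rho(g)^{-1}$.
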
 
 
 \begin{proof}
 Let $\rho : G \to H$ be a retraction map i.e. an epimorphism such that the restriction of $\rho$ on $H$ is the identity map. Let  $g \in G \setminus H$, then $g = kh$, where $k \in Ker (\rho) \setminus \{ 1 \}, h \in H$ and since $G$ is residually finite  there is a normal subgroup $N$ of finite index in $G$ such that $k \not\in N$. Set $M = \rho^{-1}( N \cap H) \cap N$ a normal subgroup of finite index in $G$. We claim that $g \not\in MH$, it is equivalent to $k \not\in MH$. Suppose $k = mh_0$, $m \in M, h_0\in H$. Then $1 = \rho(k) = \rho(m) \rho(h_0) = \rho(m) h_0$, so $h_0 = \rho(m^{-1}) \in \rho(M) \leq N$ and $k = mh_0 \in N $, a contradiction. Note that the profinite closure $S$ of $H$ in $G$ is the intersection of  $N_0 H$, where $N_0$ runs through the  finite index normal subgroups $N_0$ in $G$, hence $g \not\in  S$. As this holds for all  $g \in G \setminus H$, we conclude that $S = H$.
 \end{proof}

\begin{prop} Let $G$ be an even Artin group of $FC$ type. Then $G$ is (cohomologically) good and $FP_{\infty}$. In particular $G \wedge G$ and $G \otimes G$ are residually finite. \end{prop}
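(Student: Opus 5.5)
Our plan is to induct on the number of vertices of the defining graph $\Gamma$, using the standard amalgam decomposition of FC type Artin groups. Recall that $G_\Gamma$ is of FC type exactly when every complete subgraph of $\Gamma$ spans a spherical Artin group.

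\emph{Base case: $\Gamma$ complete.} Then $G$ is a spherical even Artin group. We will use the known fact that even Artin groups of FC type are poly-free; in the spherical case the poly-free series is a chain of finitely generated free groups, as in \cite{BG-MP-L}. The lemma above on poly-(finitely generated free) groups then shows that $G$ is (cohomologically) good, $FP_{\infty}$ and residually finite. This needs the finite generation of the free factors in the series, and we would check it from the explicit construction in \cite{BG-MP-L}.

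\emph{Inductive step: $\Gamma$ not complete.} Choose vertices $v,w$ not joined by an edge. Let $\Gamma_1,\Gamma_2,\Gamma_0$ be the full subgraphs spanned by $V(\Gamma)\setminus\{v\}$, $V(\Gamma)\setminus\{w\}$ and $V(\Gamma)\setminus\{v,w\}$. Then $G=G_{\Gamma_1}*_{G_{\Gamma_0}}G_{\Gamma_2}$, and every full subgraph of an FC type graph is again of FC type. By induction $G_{\Gamma_i}$ is good and $FP_{\infty}$ for $i=0,1,2$.

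The key steps, in order, are as follows.
\begin{enumerate}
\item Residual finiteness of $G$ is given by \cite{BG-MP-L}. Alternatively it follows from \cite{B-E}, because $G_{\Gamma_0}$ is a retract of each $G_{\Gamma_i}$: for an even Artin group, killing the vertices outside a full subgraph gives a retraction.
\item The decomposition is efficient. Since the $G_{\Gamma_i}$ are retracts of $G$, the profinite topology of $G$ induces the full profinite topology on each of them, and hence also on $G_{\Gamma_0}$ inside $G_{\Gamma_1}$ and inside $G_{\Gamma_2}$. Lemma \ref{retract10} shows that the $G_{\Gamma_i}$ are separable in $G$.
\item Proposition \ref{good-short} now gives that $G$ is good.
\item $G$ is $FP_{\infty}$ by \cite{Brown-cr}, because the vertex and edge groups are $FP_{\infty}$.
\end{enumerate}

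For the final statement, $FP_{\infty}$ implies that $H_2(G,\Z)$ is finitely generated, so Corollary \ref{equi2} gives that $G\wedge G$ is residually finite. For an Artin group $G^{ab}$ is free abelian, hence has no 2-torsion, and Theorem \ref{old1} d) gives that $G\otimes G$ is residually finite.

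The main obstacle is the base case: showing that spherical even Artin groups are good, which we reduce to poly-(finitely generated free). If the free factors in the poly-free series were not finitely generated, we would instead try a finite index subgroup, such as the pure Artin group analogue, built as iterated extensions of f.g. free groups by f.g. free groups. Lemma \ref{Serre2} a) would then pass goodness back up to $G$.
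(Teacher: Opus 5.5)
\textbf{The inductive step is fine; the gap is the base case.} Your inductive step matches the paper's argument: the splitting $G=G_{\Gamma_1}*_{G_{\Gamma_0}}G_{\Gamma_2}$, efficiency via retractions and Lemma \ref{retract10}, then Proposition \ref{good-short}. Your closing deductions via Corollary \ref{equi2} and Theorem \ref{old1} d) also match. Your use of \cite{Brown-cr} for $FP_{\infty}$ supplies a justification the paper leaves implicit.

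In the base case you assert that, for spherical even $\Gamma$, the poly-free series of \cite{BG-MP-L} consists of finitely generated free groups. You give no reason, and you defer it yourself (``we would check it''). The fallback through a ``pure Artin group analogue'' is not an argument as it stands, since it also needs to know which spherical types can occur.

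What you are missing is a description of the spherical even Artin groups. The paper gets it from the classification of finite Coxeter groups. The only irreducible spherical types with all labels even are $A_1$ and the dihedral types $I_2(2k)$. Hence $G$ is a direct product of copies of $\Z$ and of groups $\langle a,b\mid (ab)^k=(ba)^k\rangle$. Each factor is good by Lemma \ref{dihedral}, and so is the product.

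You can also see this without the classification. Suppose a vertex $b$ were joined to $a$ and to $c$ by edges with labels $\ge 4$. Then the parabolic subgroup on $\{a,b,c\}$ would be a triangle Coxeter group with $\frac1{m_{ab}}+\frac1{m_{bc}}+\frac1{m_{ac}}\le \frac14+\frac14+\frac12=1$. Such a group is infinite, contradicting sphericity. So the edges with label $>2$ form a matching, and all other pairs of vertices commute.

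With this in hand your own route goes through. By the proof of Lemma \ref{dihedral}, with $x=ab$, the normal closure of $b$ is free on $b,b^x,\ldots,b^{x^{k-1}}$, with infinite cyclic quotient. So each factor, and hence $G$, is poly-(finitely generated free). Your lemma on such groups then gives goodness, $FP_{\infty}$ and residual finiteness for the base case. This also confirms your guess: deleting one vertex of a spherical even graph gives a retraction whose kernel is free of finite rank.
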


\begin{proof}
We induct on the number of vertices in the graph $\Gamma$ that defines, so   $G = G_{\Gamma}$. If $\Gamma$ is not the full graph then there are two vertices $v_1$ and $v_2$ not connected by an edge  and $\Gamma = \Gamma_1 \cup \Gamma_2$, where $v_i$ is a vertex of $\Gamma_i$. Denote $\Gamma_0 = \Gamma_1 \cap \Gamma_2$. Then $G = G_1 *_{G_0} G_2$, where $G_i = G_{\Gamma_i}$.  We claim that this decomposition is efficient with respect to the profinite topology.
The group $G$ is residually finite by \cite{BG-MP-L}.
Since the Artin group is even $G_0$, $G_1$ and $G_2$ are retracts of $G$, hence the profinite topology of $G$  induces on $G_i$ the profinite topology of $G_i$. Each $G_i$ is
closed in the profinite topology by Lemma \ref{retract10}.

If $\Gamma$ is the full graph, then since $G$ is of $FC$ type then $G$ is spherical , hence a direct product of irreducible spherical Artin groups ( i.e. the corresponding Coxeter group is finite). By the classification of the irreducible spherical Artin groups ( see \cite{Co} for the classification of  finite Coxeter groups, \cite{Hu} for the corresponding Dynkin diagrams ) we see that the only even ones are $\Z$ or dihedral. But both are (cohomologically) good, hence $G$ is (cohomologically) good. 

By  \cite{BG-MP-L} $G$ is residually finite and  obviously $H_2(G, \mathbb{Z})$ is finitely generated.  Then by  Corollary \ref{equi2}   $G \wedge G$ is residually finite.  Finally note that $G^{ab}$ is torsion-free and by Theorem \ref{old1}, d) $G \otimes G$ is residually finite.
\end{proof}

By definition Bianchi groups are defined as $PSL(2, O_d)$,
where $O_d$ is the ring of integers in the imaginary quadratic number field $\mathbb{Q} ( \sqrt{-d})$, where $d \in \mathbb{Z}, d \geq 1$.

 \begin{prop}
 Let $G = PSL(2, O_d)$ be a Bianchi group. Then $G \wedge G$ is residually finite.  
  Furthermore 
 in the case of the singly cusped Bianchi groups if  $d = -3, -11, - 19,  -67, - 163$ then $G \otimes G$ is residually finite.
 \end{prop}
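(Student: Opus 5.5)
The plan is to apply Corollary \ref{equi2}. That requires three facts about a Bianchi group $G = PSL(2, O_d)$: it is residually finite, it lies in $\mathcal{A}_2$, and $H_2(G, \Z)$ is finitely generated. The statement about $G \otimes G$ will then follow from Theorem \ref{old1}, d), once we check that $G^{ab}$ has no $2$-torsion in the singly cusped cases listed.

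\emph{Residual finiteness.} $SL(2, O_d)$ is a finitely generated linear group over a commutative ring, so it is residually finite by Malcev. Passing to $PSL(2,O_d)$ means quotienting by the finite central subgroup $\{\pm 1\}$. The quotient is still a finitely generated linear group, via the adjoint representation, and is therefore residually finite.

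\emph{Finiteness of $H_2$.} Bianchi groups are arithmetic lattices in $PSL(2,\mathbb{C})$. They are virtually torsion-free, and a torsion-free finite index subgroup is the fundamental group of a finite-volume hyperbolic $3$-manifold; this manifold has a compact core with boundary tori. So $G$ is virtually of type $F$, hence of type $FP_\infty$ over $\Q$. Because the torsion of $G$ is finite (finitely many conjugacy classes of finite subgroups), $G$ is of type $FP_\infty$ over $\Z$ as well. In particular $G$ is finitely presented and $H_2(G,\Z)$ is finitely generated.

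\emph{Goodness.} This is the main step. Take a torsion-free finite index subgroup $U$, which is a cusped hyperbolic $3$-manifold group. By Agol and Wise, cusped finite-volume hyperbolic $3$-manifold groups are virtually compact special. Hence $U$ has a finite index subgroup that is a virtual retract of a RAAG and, via Wise's quasiconvex hierarchy, is virtually a fibered-type group. I would then invoke the known result that fundamental groups of compact $3$-manifolds are (cohomologically) good; this follows from the virtual fibering theorem together with Lemma \ref{Serre2} b), since a surface-by-$\Z$ group is good by Lemma \ref{Serre2} b). Concretely, after passing to finite index, $U$ is (finitely generated free or surface)-by-$\Z$; free and surface groups are good and have finite cohomology with finite coefficients, so Lemma \ref{Serre2} b) applies. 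Lemma \ref{Serre2} a) then gives $G \in \mathcal{A}_2$. The obstacle here is purely citational, namely the virtual fibering of cusped hyperbolic $3$-manifolds, which I would quote rather than reprove.

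Corollary \ref{equi2} now gives that $G \wedge G$ is residually finite.

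\emph{The tensor square.} For $d = 3, 11, 19, 67, 163$, where $O_d$ is Euclidean or has class number one with a single cusp, I would read off $G^{ab}$ from the known presentations (Swan, Fine). Each of these is finitely generated, and I expect that in the listed cases $G^{ab}$ has no elements of order $2$ (for example, $PSL(2,O_3)^{ab} \simeq \Z/3\Z$). Checking these abelianizations case by case is routine but is the only computational step. Theorem \ref{old1}, d) then converts residual finiteness of $G \wedge G$ into residual finiteness of $G \otimes G$.
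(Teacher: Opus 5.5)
Your proposal is correct and has the same skeleton as the paper's proof. Both use Corollary \ref{equi2} for $G \wedge G$. For $G \otimes G$, both use Theorem \ref{old1}, d) together with the known abelianizations of the singly cusped groups, which you take from the explicit presentations and which the paper takes from Swan and Reese.

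The difference is in how the three inputs are justified:
\begin{itemize}
\item \emph{Goodness.} The paper simply quotes \cite{G-J-Z}. There goodness comes from a hierarchy of a torsion-free finite-index subgroup whose profinite topology is efficient, which relies on separability of geometrically finite subgroups. You instead use the virtual fibering of cusped hyperbolic $3$-manifolds. This reduces you to a (finitely generated free)-by-$\Z$ subgroup, and Lemma \ref{Serre2} b) and then a) give $G \in \mathcal{A}_n$ for all $n$.
\item \emph{Residual finiteness.} The paper also reads this off the hierarchy of \cite{G-J-Z}. Your Malcev argument, with the adjoint representation or the quotient by the finite centre, is more elementary.
\item \emph{Finite generation of $H_2(G,\Z)$.} The paper uses finite presentability from \cite{G-S}. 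You use the compact core of the quotient manifold.
\end{itemize}

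Your route makes it transparent why goodness holds, but it rests on later and heavier machinery: Wise's virtual specialness of cusped hyperbolic $3$-manifold groups, then virtual RFRS, then Agol's fibering criterion. Note that the fibering comes from Agol's criterion, not from the quasiconvex hierarchy itself, as your wording suggests. The paper's citation predates all of this and avoids it.

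Two small inaccuracies do not affect the argument:
\begin{itemize}
\item Type $FP_\infty$ over $\Z$, and also finite presentability, pass directly from a finite-index subgroup to $G$. So the detour through $\Q$ and the remark about conjugacy classes of finite subgroups are unnecessary.
\item Finite presentability does not follow from $FP_\infty$. You only need $FP_2$ for $H_2(G,\Z)$ to be finitely generated, so nothing breaks.
\end{itemize}
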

 
 \begin{proof}  Bianchi groups are known to be  (cohomologically) good  because of the hierarchy properties \cite{G-J-Z}.  By \cite{G-S} $G$ is finitely presented, hence $H_2(G, \Z)$ is finitely generated.  The existing hierarchy as in \cite{G-J-Z} shows Bianchi groups  are residually finite. Hence by Corollary \ref{equi2}  $G \wedge G$ is residually finite.  
 
  In \cite{Re}, \cite{Swan} are calculated the abelianizations of the singly cusped Bianchi groups ( i.e. $d =  -1, -2, -3, -7, -11, -19,  -43, -67, -163$)  and the calculations there show that in general the abelianization might or might not contain 2-torsion, this all depends on $d$. There is no 2-torsion in the abelianization of $G$ precisely when  $d = -3, -11, - 19,  -67, - 163$. 
 \end{proof}

 \begin{prop}
 Let $G$ be a limit group. Then $G \wedge G$ is residually finite.
 \end{prop}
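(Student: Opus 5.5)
The plan is to apply Corollary \ref{equi2} directly. It needs three inputs: that $G$ is residually finite, that $G \in \mathcal{A}_2$, and that $H_2(G, \Z)$ is finitely generated.

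The first and third inputs come from the facts about limit groups recalled in the introduction. Limit groups are residually free, and free groups are residually finite, so $G$ is residually finite. Limit groups are also finitely presented, and in fact of type $FP_{\infty}$. Finite presentability alone already gives that $H_2(G, \Z)$ is finitely generated: for a presentation $G = F/R$ with $F$ free of finite rank and $R$ the normal closure of finitely many relators, Hopf's formula gives $H_2(G,\Z) \simeq (R \cap F')/[F,R]$. This is a subgroup of the finitely generated abelian group $R/[F,R]$, hence finitely generated. Alternatively, type $FP_2$ gives the same conclusion directly.

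For goodness, I will use the theorem of Grunewald, Jaikin-Zapirain and Zalesskii \cite{G-J-Z} that limit groups are (cohomologically) good. In particular $G \in \mathcal{A}_2$.

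Corollary \ref{equi2} then shows that $H_2(G,\Z) \to H_2(\widehat{G}, \widehat{\Z})$ is injective and that $G \wedge G$ is residually finite. Corollary \ref{equi2} itself rests on Theorem \ref{surprise}, which identifies $\mathcal{A}_2$ with $\mathcal{A}_2^{hom}$, and on Theorem \ref{equi}. No step is a genuine obstacle: the only nontrivial input is goodness, which is imported from \cite{G-J-Z}. One point to keep in mind is that the statement concerns only $G \wedge G$ and not $G \otimes G$, so no hypothesis on $2$-torsion in $G^{ab}$ is needed. (For limit groups $G^{ab}$ is in fact torsion-free, but we do not need this.)
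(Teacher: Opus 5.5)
Your proposal is correct and follows essentially the same route as the paper: residual finiteness from residual freeness, finite generation of $H_2(G,\Z)$ from finite presentability, goodness from \cite{G-J-Z}, and then Corollary \ref{equi2}. Your Hopf-formula argument for the finite generation of $H_2(G,\Z)$ just makes explicit a step that the paper leaves implicit.
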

  
  \begin{proof}
  Limit groups are finitely presented. Since limit groups are fully residually free, they are residually free and hence residually finite. By \cite{G-J-Z} they are (cohomologically) good. Then by  Corollary  \ref{equi2}  $G \wedge G$ is residually finite. 
  \end{proof}
  
  \begin{prop}
  Let $G$ be a Coxeter group. Then $G \wedge G$ is residually finite.
  \end{prop}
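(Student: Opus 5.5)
The plan is to follow the same pattern as the preceding propositions and reduce everything to Corollary \ref{equi2}. For this we need three inputs about a Coxeter group $G$: it is residually finite, it lies in $\mathcal{A}_2$, and $H_2(G, \Z)$ is finitely generated. Once these are in place, Corollary \ref{equi2} gives that $H_2(G,\Z) \to H_2(\widehat{G}, \widehat{\Z})$ is injective and that $G \wedge G$ is residually finite.

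The inputs should come from known results as follows.
\begin{itemize}
\item Residual finiteness: by Tits' theorem, $G$ is linear over $\mathbb{C}$. A finitely generated linear group is residually finite by Malcev \cite{Mal}.
\item Goodness: by Hughes, M\"oller and Verghese \cite{H-M-V}, $G$ is (cohomologically) good, so in particular $G \in \mathcal{A}_2$. By Theorem \ref{surprise} this is the same as $G \in \mathcal{A}_2^{hom}$, which is the form Theorem \ref{equi} needs.
\item Finite generation of $H_2(G,\Z)$: a Coxeter group (with finite generating set, as in the standard setting) is finitely presented by its Coxeter presentation. Hence $H_2(G,\Z)$ is finitely generated, via Hopf's formula or simply because $G$ is of type $FP_2$. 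One could also note that $G$ is virtually torsion-free and acts cocompactly on the Davis complex, so it is even of type $FP_{\infty}$, but finite presentability suffices.
\end{itemize}

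If Corollary \ref{equi2} is taken as stated, no step is a real obstacle. The only delicate point is how the goodness result in \cite{H-M-V} is phrased, since it may be stated for a finite index torsion-free subgroup. In that case Lemma \ref{Serre2} a) transfers membership in $\mathcal{A}_2$ from the subgroup to $G$.

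I would not try to say anything about $G \otimes G$ here. The abelianization of a Coxeter group is an elementary abelian $2$-group, so Theorem \ref{old1} d) does not apply unless $G'$ has a complement. That is presumably why the statement is restricted to $G \wedge G$.
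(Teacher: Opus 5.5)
Your proposal is correct and follows essentially the same route as the paper's proof: residual finiteness from Tits' linearity theorem, goodness from \cite{H-M-V}, finite generation of $H_2(G,\Z)$ from the finite Coxeter presentation, and then Corollary \ref{equi2}. Your extra remarks, on transferring $\mathcal{A}_2$ via Lemma \ref{Serre2} a) and on why $G \otimes G$ is not treated, are accurate but not needed for the argument.
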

  
  \begin{proof}
 Coxeter groups are linear over $\mathbb{C}$, in particular they are  residually finite. Recently Hughes, M\"oller and  Verghese proved that Coxeter groups are (cohomologically) good \cite{H-M-V}. By definition Coxeter groups are finitely presented. Then we can apply Corollary \ref{equi2}.
  \end{proof}

  \section{$G \otimes G$ and $G \wedge G$ for $G =SL_n(\mathbb{Z})$ and $G = GL_n(\mathbb{Z})$} \label{SLnSection}

\medskip
  In this section we use results from algebraic $K$-theory  to study the residual finiteness of $G \wedge G$ and $G \otimes G$ for $G = SL_n(\mathbb{Z})$ and $G = GL_n(\mathbb{Z})$ for $n \geq 3$. Our main reference is the book \cite{Kbook}.
For the definitions of the Steinberg group $St(R)$, $St_n(R)$ and the relevant groups $E(R)$ and $E_n(R)$ we refere the reader to \cite{Kbook}.

Let $R$ be a ring. By definition, see \cite{Kbook},  there is an exact sequence of groups
 $$
 1 \to K_2(R) \to St(R) \to GL(R) \to K_1(R) \to 1$$  
The image of the map $St(R) \to GL(R)$ is precisely  $E(R)$.

\begin{theorem} (Kervaire, Steinberg) \cite[Thm. 5.5]{Kbook} The Steinberg group $St(R)$ is the universal
central extension of $E(R)$. Hence
$K_2(R) \simeq H_2(E(R); \Z)$.
\end{theorem}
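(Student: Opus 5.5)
The plan is to derive both assertions from the classical theory of central extensions, applied to the perfect group $E(R)$. First I will show that $E(R)$ is perfect. For $n \geq 3$ the elementary matrices satisfy $e_{ij}(r) = [e_{ik}(r), e_{kj}(1)]$ for distinct $i,j,k$. Since $E(R)$ is generated by the $e_{ij}(r)$ and is the union of the $E_n(R)$, it equals its own commutator subgroup. The same identity holds in $St(R)$ for the generators $x_{ij}(r)$: the Steinberg relation $[x_{ik}(r), x_{kj}(1)] = x_{ij}(r)$ shows that $St(R)$ is perfect as well.

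Next I will check that $St(R) \to E(R)$ is a central extension with kernel $K_2(R)$. That the kernel is $K_2(R)$ is the exact sequence recalled above. For centrality I will use the standard argument. Take $c \in K_2(R)$ and write $c$ as a word in generators involving only indices $\leq n$. Choose an index $m > n$ and set $P_m = \langle x_{im}(r) : i < m \rangle$, which is abelian. Conjugation by $c$ acts on $P_m$ through the image of $c$ in $E(R)$, which is the identity. Hence $c$ centralizes $P_m$, and likewise the analogous subgroup $Q_m = \langle x_{mj}(r) \rangle$. These subgroups generate $St(R)$, because $x_{ij}(r) = [x_{im}(r), x_{mj}(1)]$. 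So $c$ is central.

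For universality, I will take any central extension $1 \to A \to X \xrightarrow{\pi} E(R) \to 1$ and construct a splitting homomorphism $St(R) \to X$. Commutators of lifts do not depend on the choice of lifts, because $A$ is central. So I define $\tilde x_{ij}(r) = [\hat e_{ik}(r), \hat e_{kj}(1)]$, where $\hat{}$ denotes an arbitrary lift, and then verify:
\begin{enumerate}
\item this is independent of the choice of $k$;
\item it is additive in $r$;
\item it satisfies the Steinberg commutator relations.
\end{enumerate}
Each verification is a Hall--Witt style computation. It uses the fact that a commutator of lifts of commuting elements lies in $A$, and that a commutator of lifts depends only on the images in $E(R)$.

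This verification step is the main obstacle. Once it is done, the relations define a homomorphism $St(R) \to X$ lifting the projection to $E(R)$. Uniqueness of such a lift follows from perfectness of $St(R)$: two lifts differ by a homomorphism into the central subgroup $A$, and any homomorphism from a perfect group to an abelian group is trivial. Then I will invoke the classical fact that a perfect group $P$ with a universal central extension $U$ satisfies $\ker(U \to P) \simeq H_2(P,\Z)$. This gives $K_2(R) \simeq H_2(E(R);\Z)$; in this paper's language, $St(R) \simeq E(R) \wedge E(R)$.
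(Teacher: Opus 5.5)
The paper gives no proof of this statement. It quotes it from Weibel's K-book (Thm.~III.5.5), and your outline is the classical Kervaire--Steinberg argument used there and in Milnor's book. Your perfectness, centrality and $H_2$ steps are fine. The gap is in the step you defer.

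The two facts you say the verification uses are that commutators of lifts are independent of the lifts, and that a commutator of lifts of commuting elements lies in $A$. These cannot by themselves give, for example, $[\tilde x_{ij}(\lambda),\tilde x_{kl}(\mu)]=1$ for $j\neq k$, $i\neq l$. They only place this commutator in $A$, and you need it to be trivial. The missing idea is an exact-commutation lemma: \emph{if $a\in X$ lifts $e_{ij}(\lambda)$ and $w\in X$ lifts $e_{kl}(\mu)$ with $j\neq k$, $i\neq l$, then $aw=wa$.}

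To prove it, use $[x,y]=xyx^{-1}y^{-1}$ and pick a fresh index $m\notin\{i,j,k,l\}$. Write $e_{kl}(\mu)=[e_{km}(\mu),e_{ml}(1)]$, so that $w=z[b,c]$ with $z\in A$ and $b,c$ lifts of $e_{km}(\mu)$ and $e_{ml}(1)$. Since $e_{ij}(\lambda)$ commutes with both of these, $[a,b]$ and $[a,c]$ lie in $A$. Hence $a[b,c]a^{-1}=[[a,b]b,[a,c]c]=[b,c]$. This is where the supply of extra indices is genuinely used, and it is why the finite-rank analogue needs $n\geq 5$.

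Once you have the lemma, the rest is formal:
\begin{itemize}
\item Additivity comes from $[aa',c]=a[a',c]a^{-1}[a,c]$.
\item Independence of $k$ and the relation $[\tilde x_{ij}(\lambda),\tilde x_{jl}(\mu)]=\tilde x_{il}(\lambda\mu)$ both come from rewriting a lift of the second factor as a commutator $[b,c]$ through a fresh index. Then use $[a,[b,c]]=[[a,b],c]$, which holds when $[a,c]\in A$ and both $[a,b]$ and $[[a,b],c]$ commute with $[b,c]$. Each of these commutations is an instance of the lemma, so you should state and prove it explicitly.
\end{itemize}

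A smaller point concerns the centrality step. With $P_m=\langle x_{im}(r): i<m\rangle$ and the analogous $Q_m$, the subgroup $\langle P_m,Q_m\rangle$ is only the image of $St_m(R)$, not all of $St(R)$: you cannot reach $x_{ij}(r)$ with $i>m$ or $j>m$. There are two easy fixes. You can let the free index range over all values $\neq m$; the group stays abelian and is still normalized by $c$, which acts through its matrix. Alternatively, note that $m>n$ is arbitrary and $St(R)=\bigcup_m St_m(R)$.
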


By \cite[Remark 5.5.2]{Kbook} (Stability for $K_2$) for $ K_2(n,R) = Ker(St_n(R) \to E_n(R)) $ if  $R$ is a Noetherian ring
of dimension $d$  then the natural maps $K_2(n,R) \to K_2(R)$ are isomorphisms for all $n \geq d + 3$. 

  \begin{prop} \cite[Prop. 5.5.1]{Kbook} \label{K2} Let $R$ be a ring and  $n \geq 5$ be an integer. Then
the Steinberg group $ St_n(R)$ is the universal central extension of $E_n(R)$. In particular
$K_2(n, \Z) = Ker (St_n(\Z) \to SL_n(\Z)) \simeq H_2(SL_n(\Z), \Z)$.
  \end{prop}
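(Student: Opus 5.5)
The proposition has two parts. The first is the general statement that $St_n(R)$ is the universal central extension of $E_n(R)$ for $n \geq 5$. The second is the specialisation to $R = \Z$.

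For the general statement I follow the standard argument in \cite{Kbook}, working with the Steinberg generators $x_{ij}(r)$, $i \neq j$, and the Steinberg relations. The steps are:
\begin{enumerate}
\item Show that $St_n(R)$ is perfect for $n \geq 3$. This follows from the identity $x_{ij}(r) = [x_{ik}(r), x_{kj}(1)]$ for $k \neq i,j$, and it implies that $E_n(R)$ is perfect as well.
\item Show that the kernel $K_2(n,R)$ of $St_n(R) \to E_n(R)$ is central. This is the step where the hypothesis on $n$ is used: for $y \in K_2(n,R)$ one needs enough free indices to conjugate so that $y$ commutes with every generator. Concretely, one shows $y$ normalises, and then centralises, the subgroups generated by the $x_{ij}(r)$ with fixed $j$, using the rank-one unipotent subgroups $P_j$.
\item Prove universality. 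Take a central extension $1 \to C \to Y \xrightarrow{\phi} St_n(R) \to 1$. For $k \neq i,j$ define $y_{ij}(r) = [\tilde{x}_{ik}(r), \tilde{x}_{kj}(1)]$, where tildes denote arbitrary lifts. Centrality of $C$ makes this independent of the choice of lifts, and one must also check independence of $k$.
\item Verify that the elements $y_{ij}(r)$ satisfy the Steinberg relations in $Y$. This gives a splitting $St_n(R) \to Y$, and by perfectness the splitting is unique.
\end{enumerate}

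The main obstacle is the verification in steps 3 and 4. Checking independence of $k$, additivity in $r$, and the commutator relations $[y_{ij}(r), y_{jl}(s)] = y_{il}(rs)$ and $[y_{ij}(r), y_{kl}(s)] = 1$ requires choosing auxiliary indices distinct from all the indices involved at once. This is exactly where $n \geq 5$ is needed. I would handle it with repeated use of the Hall--Witt identity together with the centrality of $C$.

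For the specialisation to $R = \Z$, universality gives $K_2(n,\Z) \simeq H_2(E_n(\Z),\Z)$. The remaining point is that $E_n(\Z) = SL_n(\Z)$, since $\Z$ is Euclidean and every matrix in $SL_n(\Z)$ reduces to the identity by elementary row operations.
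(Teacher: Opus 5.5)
There is a genuine gap in step 2. Steps 1, 3 and 4 are the standard Kervaire--Steinberg splitting argument, and that is where $n\geq 5$ really enters: to see that a lift of $x_{kl}(s)$ commutes with $[\tilde x_{ih}(r),\tilde x_{hj}(1)]$ when $i,j,k,l$ are distinct, you need an index $h\notin\{i,j,k,l\}$.

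Step 2 is not an index-counting problem. Steinberg's $P_j$ argument applies only to an element $y$ that lies in the subgroup generated by the $x_{kl}(s)$ with $k,l\neq j$. Such a $y$ normalises the abelian group $P_j=\langle x_{ij}(r)\rangle$. Since $P_j$ maps injectively into $E_n(R)$, if $y\mapsto 1$ then $y$ centralises $P_j$. But a generator $x_{jl}(s)$ does not normalise $P_j$: the image of $x_{jl}(1)x_{lj}(1)x_{jl}(-1)$ has $(l,l)$-entry $0$. An arbitrary $y\in K_2(n,R)$ is a word that may involve every index. In $St(R)$ this costs nothing, because $y$ lies in the image of some $St_m(R)$ and you take $j>m$. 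In $St_n(R)$ there is no free index, and the bound $n\geq 5$ does not create one. To run the argument you would need every element of $K_2(n,R)$ to come from $K_2(n-1,R)$. That is a surjective stability property, known when $n$ is large relative to the stable rank of $R$, and it does not follow from $n\geq 5$. Centrality of $K_2(n,R)$ for a fixed finite $n$ is a separate and much harder theorem (for commutative $R$ it is due to van der Kallen). So as written, your proof does not establish the statement for an arbitrary ring.

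Without step 2, here is what you have proved for every ring and every $n\geq 5$:
\begin{enumerate}
\item $St_n(R)$ is perfect and all its central extensions split.
\item Pulling back the universal central extension $U$ of $E_n(R)$ along $St_n(R)\to E_n(R)$ gives a surjection $St_n(R)\to U$ over $E_n(R)$ with kernel $[K_2(n,R),St_n(R)]$.
\item Hence $H_2(E_n(R),\Z)\simeq K_2(n,R)/[K_2(n,R),St_n(R)]$.
\end{enumerate}
Centrality is exactly what is needed to remove the commutator.

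For the case the paper actually uses, $R=\Z$, you can close the gap with the stability remark quoted just before the proposition. Since $K_2(m,\Z)\to K_2(\Z)$ is an isomorphism for $m\geq 4$, for $n\geq 5$ the map $K_2(n-1,\Z)\to K_2(n,\Z)$ is onto. So every $y\in K_2(n,\Z)$ is a word in the $x_{kl}(s)$ with $k,l\leq n-1$. Steinberg's argument with $P_n$ and its row analogue then shows that $y$ commutes with all $x_{in}(r)$ and $x_{ni}(r)$, hence with every $x_{il}(r)=[x_{in}(r),x_{nl}(1)]$. The paper itself gives no argument here; it simply quotes the result from the K-book.
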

 
\begin{prop} \cite[Exer. 5.13]{Kbook}  \label{K4} If p is an odd prime, by  \cite[Thm
III.5.11.1]{Kbook}
$ K_2(\Z/p^n) =  0$ ( in additive notation) and if
$n \geq 2$ we have $ K_2(\Z/2^n) $ is a cyclic group of order 2.
  \end{prop}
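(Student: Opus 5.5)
The plan is to treat $\Z/p^n$ as a finite local ring and use the standard presentation of $K_2$ of such rings by symbols. By the stability remark quoted above (for $R$ Noetherian of dimension $0$, $K_2(m,R) \to K_2(R)$ is an isomorphism for $m \geq 3$), it suffices to work with $K_2(R)$ itself. For a commutative local ring, $K_2(R)$ is generated by Dennis--Stein symbols $\langle a,b\rangle$ with $1+ab \in R^\times$. When the residue field has more than a few elements, one can even use Steinberg symbols $\{u,v\}$ with $u,v \in R^\times$ (van der Kallen, Stein--Dennis). I would first record the short exact sequence
$$K_2(\Z/p^n, p\Z/p^n) \to K_2(\Z/p^n) \to K_2(\mathbb{F}_p) \to 1 .$$
Since $K_2$ of a finite field vanishes (Steinberg; this is Thm.~III.5.11.1 in \cite{Kbook}), $K_2(\Z/p^n)$ is the image of the relative group. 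The relative group is generated by Dennis--Stein symbols $\langle a,b\rangle$ with $a \in p\Z/p^n$.

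For $p$ odd, the aim is to kill all these generators. I would use the relations $\langle a,b\rangle = \langle -b,-a\rangle$ and $\langle a,b\rangle\langle a,c\rangle = \langle a, b+c+abc\rangle$, together with $\langle a,bc\rangle = \langle ab,c\rangle\langle ac,b\rangle$. These should reduce everything to symbols $\langle p^k u, v\rangle$. Then an induction on $k$ from the top, where $p^n=0$ and the symbol vanishes trivially, should show each such symbol is trivial. The point where oddness of $p$ enters is that $1+p\Z/p^n$ is cyclic and the $p$-power map is surjective onto $1+p^{k+1}$ from $1+p^k$. A cleaner alternative for the same step: since $(\Z/p^n)^\times$ is cyclic with generator $g$, $K_2$ is generated by the single symbol $\{g,g\}=\{g,-1\}$, which has order dividing $2$. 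It is also killed by the odd order of $1+p\Z/p^n$, provided one chooses $g=\zeta\cdot(1+p)$ and uses $\{\zeta,\zeta'\}=1$ (the latter comes from $\mathbb{F}_p$-lifts via the Teichm\"uller map). Combining the two facts gives $K_2(\Z/p^n)=0$.

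For $p=2$ and $n\geq 2$, the units are $\{\pm1\}\times(1+4\Z/2^n)$ with $1+4\Z/2^n = \langle 5\rangle$. The same symbol calculus shows $K_2(\Z/2^n)$ is generated by $\{-1,-1\}$, using $\{5,5\}=\{5,-1\}$ and $\{-1,5\}=1$ from $\{u,1-u\}=1$ applied to $u=5$, $1-u=-4$ (handled via a Dennis--Stein symbol). Since $\{-1,-1\}^2=1$, the group has order at most $2$. The main obstacle is showing that $\{-1,-1\}\neq 1$. For this I would construct a detecting homomorphism. Milnor's computation $K_2(\Z)=\Z/2$ is generated by $\{-1,-1\}$, and it is detected by the real embedding, i.e.\ by the sign/Hilbert symbol at $\infty$. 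For $\Z/2^n$ one instead uses the Hilbert symbol at $2$: $(u,v)\mapsto(-1)^{\epsilon(u)\epsilon(v)}$ with $\epsilon(u)=(u-1)/2 \bmod 2$. This is well defined on $(\Z/4)^\times$ for $n\geq 2$. I would check that it satisfies the defining relations of $K_2$ of the local ring (bilinearity and the Steinberg/Dennis--Stein relations) and sends $\{-1,-1\}$ to $-1$. Verifying those relations on $\Z/2^n$, rather than on $\Z_2$ where the Hilbert symbol is classical, is the delicate step. It can be bypassed by noting that $K_2(\Z_2)\to K_2(\Z/2^n)$ is surjective and that the tame-type $2$-adic symbol factors through $\Z/4$. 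This would give $K_2(\Z/2^n)\simeq\Z/2$.
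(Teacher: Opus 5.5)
The paper gives no argument here; it only cites Weibel's exercise, i.e.\ the Dennis--Stein computation. Your overall strategy is the standard one: reduce to Steinberg symbols of units, use $K_2(\mathbb{F}_p)=0$ and the cyclicity of the units, and detect $\{-1,-1\}$ by the $2$-adic Hilbert symbol. However, the decisive steps are not established.

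\textbf{The case $p$ odd.} Your first route (``should reduce'', ``should show'') is a heuristic. Note also that (DS1) reads $\langle a,b\rangle=\langle -b,-a\rangle^{-1}$, not an equality without the inverse. The ``cleaner alternative'' rests entirely on $\{\zeta,\zeta\}=1$ for the Teichm\"uller generator $\zeta$, which you justify by lifting from $\mathbb{F}_p$. That justification is invalid. The Teichm\"uller map $\mathbb{F}_p^\times\to(\Z/p^n)^\times$ is multiplicative but not additive, and there is no ring homomorphism $\mathbb{F}_p\to\Z/p^n$ for $n\geq 2$. Functoriality of $K_2$ only runs $K_2(\Z/p^n)\to K_2(\mathbb{F}_p)$, so $K_2(\mathbb{F}_p)=0$ says nothing about $\{\zeta,\zeta\}$. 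Orders cannot replace this step: $\{g,g\}=\{g,-1\}=\{g,g\}^{p^{n-1}(p-1)/2}$ forces triviality only when $p\equiv 1\pmod 4$.

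The missing ingredient is the Steinberg relation itself, used as for finite fields. Pick $u$ with $\bar u$ and $1-\bar u$ both non-squares in $\mathbb{F}_p$. This is possible for every odd $p$: $x\mapsto 1-x$ maps the $(p-1)/2$ non-squares injectively into $\mathbb{F}_p\setminus\{0,1\}$, which contains only $(p-3)/2$ squares. By Hensel's lemma a unit of $\Z/p^n$ is a square if and only if its residue is. Hence $u=g^a$ and $1-u=g^b$ with $a,b$ odd, and $1=\{u,1-u\}=\{g,g\}^{ab}=\{g,g\}$. No Teichm\"uller splitting is needed.

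\textbf{The case $p=2$.} The reduction to $\{-1,-1\}$ needs $\{-1,5\}=1$, and your derivation fails. $1-5=-4$ is not a unit; indeed no unit $u$ of $\Z/2^n$ has $1-u$ a unit, so $\{u,1-u\}=1$ is vacuous there. The Dennis--Stein symbol encoding $\{5,-4\}$ would be $\langle -5,1\rangle$, which is undefined since $1+(-5)\cdot 1$ is not a unit.

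A correct argument: with $\langle a,b\rangle=\{-a,1+ab\}$ for $a$ a unit, (DS3) gives $\{-1,5\}=\langle 1,2\cdot 2\rangle=\langle 2,2\rangle^2$. The group $K_2(\Z/2^n)$ is abelian and generated by $\{-1,-1\}$, $\{-1,5\}$, $\{5,-1\}$ and $\{5,5\}=\{5,-1\}$, all of order at most $2$, so $\langle 2,2\rangle^2=1$. You hedge generation by Steinberg symbols for small residue fields but then use it for $\Z/2^n$ and $\Z/3^n$. It does hold for every local ring: if $a,b$ are non-units, (DS2) gives $\langle a,b\rangle=\langle a,1\rangle\langle a,b'\rangle=\{1+ab',b'\}$ with $b'=(b-1)/(1+a)$ a unit.

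\textbf{Nontriviality of $\{-1,-1\}$.} Your ``bypass'' assumes what must be shown. That $(u,v)_2$ depends only on $u,v \bmod 4$ does not imply that $K_2(\Z_2)\to K_2(\Q_2)\to\{\pm 1\}$ kills $\ker(K_2(\Z_2)\to K_2(\Z/2^n))$. What does imply it is that this kernel is generated by Dennis--Stein symbols $\langle a,b\rangle$ with $a\in 2^n\Z_2$. By the same (DS2) manipulation, each such symbol equals $\{w,v\}$ with $v$ a unit and $w\equiv 1\pmod{2^n}$, so it has Hilbert symbol $1$ for $n\geq 2$. Surjectivity of $K_2(\Z_2)\to K_2(\Z/2^n)$ then gives a homomorphism $K_2(\Z/2^n)\to\{\pm 1\}$ sending $\{-1,-1\}$ to $(-1,-1)_2=-1$.
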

  
  \begin{theorem} \cite[Ch. 10]{Milnorbook}
  The group $K_2(\Z)$ is cyclic of order 2.
  \end{theorem}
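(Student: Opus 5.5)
This is Milnor's classical computation, quoted from \cite[Ch. 10]{Milnorbook}. I would organize the argument in three steps: construct an explicit candidate element, show it has order at most $2$, show it is nontrivial, and finally show it generates $K_2(\Z)$. Write $x_{ij}(a)$ for the standard generators of $St_n(\Z)$. By the stability result quoted above (\cite[Remark 5.5.2]{Kbook}), $K_2(n,\Z) \to K_2(\Z)$ is an isomorphism for $n \geq 4$, so I may work in $St_n(\Z)$ for a fixed $n \geq 3$ or $n \geq 4$ as convenient.

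First I would set $w_{12} = x_{12}(1)x_{21}(-1)x_{12}(1)$. This element maps to the signed permutation matrix $\begin{pmatrix} 0 & 1 \\ -1 & 0\end{pmatrix}$ in the upper left corner, so $h_{12}(-1)=w_{12}^2$ maps to $\mathrm{diag}(-1,-1,1,\ldots)$. Hence $\{-1,-1\} = h_{12}(-1)^2 = w_{12}^4$ lies in $K_2(n,\Z)$. It is central, and the standard Steinberg symbol identities (bimultiplicativity and $\{u,-u\}=1$ for the unit $u=-1$) give $\{-1,-1\}^2 = \{1,-1\}=1$. So $\{-1,-1\}$ has order at most $2$.

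For nontriviality I would use the real Lie group $SL_n(\mathbb{R})$, $n \geq 3$. Its fundamental group is $\Z/2$ (via $SO(n)$), so its universal cover $\widetilde{SL_n(\mathbb{R})}$ is a central extension by $\Z/2$. Since $St_n(\Z)$ is generated by the $x_{ij}(a)$, and each one-parameter subgroup $t \mapsto e_{ij}(t)$ lifts uniquely to the cover, the Steinberg relations hold for the lifts. This gives a homomorphism $St_n(\Z) \to \widetilde{SL_n(\mathbb{R})}$ over $SL_n(\Z) \to SL_n(\mathbb{R})$. The image of $w_{12}^4$ is then the lift of the loop of rotations by angle $2\pi$ in the $(1,2)$-plane. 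That loop generates $\pi_1(SO(n))$, so the image is the nontrivial central element and $\{-1,-1\}\neq 1$.

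The main obstacle is generation: every element of $K_2(n,\Z)$ is a power of $\{-1,-1\}$. I would follow Milnor's Euclidean-algorithm argument. Given a word in the $x_{ij}(a)$ representing $y \in St_n(\Z)$, I induct on the size of the entries of a column of its image to rewrite $y$ in normal form. Using the division algorithm in $\Z$, left multiplication by suitable $x_{ij}(a)$ reduces the last column of the image matrix to $\pm e_n$, and lifting the elementary operations keeps track in $St_n(\Z)$. This shows every element of $St_n(\Z)$ can be written as $u \cdot h \cdot v$ with $v$ in the image of $St_{n-1}(\Z)$, $u$ in the subgroup generated by the $x_{in}$ and $x_{ni}$, and $h$ diagonal. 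The $x_{in}$ generate a subgroup mapping isomorphically to the unipotent radical, so an element of $K_2(n,\Z)$ is pushed down to $K_2(n-1,\Z)$ modulo diagonal elements $h_{ij}(\pm1)$, whose kernel contributions are powers of $\{-1,-1\}$. By induction down to $n=2$ or $n=3$, where a direct computation with the presentation of $SL_2(\Z)$ (amalgam $\Z/4 *_{\Z/2}\Z/6$) finishes, I would conclude that $K_2(n,\Z)$ is generated by $\{-1,-1\}$. The care needed in the normal form reduction, namely checking that the lifting of the Euclidean steps is consistent modulo the centre, is where I expect most of the work.
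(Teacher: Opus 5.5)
Your first two steps are fine. For $n\ge 3$, $\{-1,-1\}=w_{12}^{\pm 4}$ is central of order at most $2$ in $St_n(\mathbb{Z})$ by bimultiplicativity. The homomorphism $St_n(\mathbb{Z})\to\widetilde{SL_n(\mathbb{R})}$ shows it is nontrivial, since $w_{12}$ is homotopic to a quarter turn in the $(1,2)$-plane. These steps are standard; the paper itself gives no proof and only cites Milnor. The gap is in the third step, and that step carries all the difficulty of the theorem.

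\textbf{The normal form is either vacuous or false.} For $n\ge 3$ and $i\ne j<n$ one has $[x_{in}(a),x_{nj}(1)]=x_{ij}(a)$. So the subgroup generated by all $x_{in}$ and $x_{ni}$ is the whole of $St_n(\mathbb{Z})$, and $y=u\cdot 1\cdot 1$ holds trivially. Suppose instead you meant $u$ to range only over $\langle x_{in}(a)\rangle$ (or only over $\langle x_{ni}(a)\rangle$), the abelian group that maps isomorphically onto a unipotent radical. Then the decomposition already fails in $SL_n(\mathbb{Z})$, because every product $uhv$ of that shape has last row (respectively last column) equal to $\pm e_n$.

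\textbf{The Euclidean reduction says nothing about kernel elements.} Running the Euclidean algorithm on the \emph{image} matrix gives no information about an element $y\in K_2(n,\mathbb{Z})$. Its image is already the identity, so the reduction is empty and you learn nothing about which subgroup of $St_n(\mathbb{Z})$ contains $y$.

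\textbf{What actually has to be proved.} You need a statement inside the Steinberg group. For instance, let $P_n\le SL_n(\mathbb{Z})$ be the stabiliser of the last row, and let $C_n\le St_n(\mathbb{Z})$ be generated by the image of $St_{n-1}(\mathbb{Z})$ together with the $x_{in}(a)$. The claim to prove is $\phi^{-1}(P_n)=C_n$. Granting this, $K_2(n,\mathbb{Z})\subseteq C_n$. Moreover $\ker(\phi|_{C_n})$ lies in the image of $K_2(n-1,\mathbb{Z})$, because $\langle x_{in}(a)\rangle$ maps isomorphically onto matrices $I+\sum_i c_i e_{in}$, which meet the block copy of $SL_{n-1}(\mathbb{Z})$ trivially. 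Your induction down to $St_2(\mathbb{Z})$ then goes through, with the amalgam presentation handling the base case.

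Proving $\phi^{-1}(P_n)=C_n$ requires the following:
\begin{itemize}
\item fix Euclidean-algorithm lifts $s_v\in St_n(\mathbb{Z})$ of the primitive rows $v$, with $s_{e_n}=1$;
\item show that $\bigcup_v C_n s_v$ is closed under right multiplication by every $x_{ij}(\pm 1)$.
\end{itemize}
This is an induction on word length, not on the size of matrix entries. It needs a case analysis using the Steinberg relations to check that the chosen lifts are compatible.

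That closure lemma is the real content of Milnor's computation. Your proposal names it only as the place where most of the work lies. As written, the proposal does not establish that $\{-1,-1\}$ generates $K_2(\mathbb{Z})$.
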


As a corollary of the above results and using the remark about the stability of $K_2$ we have

\begin{cor} \label{K3}  For $n \geq 5$ 
$$H_2(SL_n(\Z), \Z) \simeq K_2(n,\Z) \simeq K_2(\Z) \simeq \Z / 2 \Z \hbox{ the cyclic group of order }2.$$
\end{cor}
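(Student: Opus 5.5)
The corollary is a chain of three isomorphisms, and I would establish them one at a time, left to right.

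For the first isomorphism, $H_2(SL_n(\Z), \Z) \simeq K_2(n,\Z)$, I will use Proposition \ref{K2} with $R = \Z$ and $n \geq 5$. This gives that $St_n(\Z)$ is the universal central extension of $E_n(\Z)$, so the kernel $K_2(n,\Z)$ of $St_n(\Z) \to E_n(\Z)$ is identified with $H_2(E_n(\Z), \Z)$. It then remains to note that $E_n(\Z) = SL_n(\Z)$. This holds because $\Z$ is a Euclidean domain, so every matrix of determinant one can be reduced to the identity by elementary row and column operations. I will also recall the standard fact that a perfect group has a universal central extension whose kernel is its Schur multiplier. Here $E_n(\Z)$ is perfect for $n \geq 3$.

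For the second isomorphism, $K_2(n,\Z) \simeq K_2(\Z)$, I will invoke the stability remark quoted before Proposition \ref{K2}. The ring $\Z$ is Noetherian of Krull dimension $d = 1$, so the natural map $K_2(n,\Z) \to K_2(\Z)$ is an isomorphism for all $n \geq d+3 = 4$. In particular it is an isomorphism for every $n \geq 5$.

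The last isomorphism, $K_2(\Z) \simeq \Z/2\Z$, is exactly Milnor's theorem cited from \cite[Ch. 10]{Milnorbook}.

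I do not expect any real obstacle. The only points needing care are checking the identification $E_n(\Z) = SL_n(\Z)$ and checking that the dimension bound in the stability remark covers all $n \geq 5$.
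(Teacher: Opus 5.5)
Your proposal is correct and follows the paper's own argument: Proposition \ref{K2} together with $E_n(\Z) = SL_n(\Z)$ gives the first isomorphism, the $K_2$-stability remark with $d = 1$ gives the second, and Milnor's computation gives $K_2(\Z) \simeq \Z/2\Z$. The paper obtains the corollary in exactly this way, by combining these results.
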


The following is a famous result of Bass-Lazard-Serre.

\begin{theorem} \cite{B-L-S} For $n \geq 3$ every finite index subgroup $H$ of $SL_n(\Z)$ is a congruence subgroup i.e. there is some $s \geq 2$ such that $Ker (SL_n( \Z) \to SL_n( \Z / s \Z)) \leq H$. Hence $ \widehat{SL_n(\Z)} \simeq SL_n( \widehat{\Z}) \simeq \varprojlim_s SL_n(\Z/ s \Z)$. 
\end{theorem}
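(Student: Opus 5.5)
The statement is the congruence subgroup theorem of Bass--Lazard--Serre. I would prove it in two stages: first reduce it to a statement about elementary subgroups, then prove that statement using commutator relations among elementary matrices.

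For an integer $s\ge 1$, let $E_n(s\Z)$ be the normal subgroup of $SL_n(\Z)$ generated by the elementary matrices $e_{ij}(s)$, $i\neq j$. Let $\Gamma(s)=Ker(SL_n(\Z)\to SL_n(\Z/s\Z))$ be the principal congruence subgroup.

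\textbf{Step 1.} Let $H$ have finite index $m$ in $SL_n(\Z)$. Replacing $H$ by its normal core, I may assume $H$ is normal, and then $[SL_n(\Z):H]$ divides $m!$. Set $s=m!$. Each $e_{ij}(1)$ has image in $SL_n(\Z)/H$ of order dividing $s$. Hence $e_{ij}(s)=e_{ij}(1)^s\in H$, and since $H$ is normal, $E_n(s\Z)\le H$.

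\textbf{Step 2 (main obstacle).} It remains to show that $\Gamma(t)\le E_n(s\Z)$ for some $t$; then $\Gamma(t)\le H$. The key claim is that $E_n(s\Z)$ has finite index in $SL_n(\Z)$ and contains $\Gamma(s^2)$; this is where $n\ge 3$ is used. Take $A\in\Gamma(s^2)$. I would reduce $A$ to the identity by row and column operations with elementary matrices in $E_n(s\Z)$, i.e. by the Euclidean algorithm along the first column, as in Bass--Milnor--Serre.

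The difficulty is that the entries must stay in the right residue classes. I would use Mennicke symbols for this. Define $[c,d]$ as the class in $\Gamma(s)/E_n(s\Z)$ of a matrix in $\Gamma(s)$ with first row $(d,c,0,\dots)$. The plan is then:
\begin{itemize}
\item show the symbol is well defined;
\item show it is multiplicative in each argument, using the commutator identity $[e_{ij}(a),e_{jk}(b)]=e_{ik}(ab)$ and the third index available when $n\ge 3$;
\item show it is invariant under $c\mapsto c+td$ and $d\mapsto d+sc$;
\item use Dirichlet's theorem on primes in arithmetic progressions to move $d$ to a prime, and deduce $[c,d]=[c,d]^{k}$ for a suitable $k$.
\end{itemize}
From this I expect the quotient $\Gamma(s)/E_n(s\Z)$ to be trivial on $\Gamma(s^2)$, which gives the containment.

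\textbf{Step 3.} The profinite statement then follows formally. Step 1 and Step 2 show that the congruence subgroups $\Gamma(s)$ are cofinal among finite index normal subgroups. Strong approximation, which holds because $SL_n$ is generated by elementary matrices over $\Z/s\Z$, gives $SL_n(\Z)/\Gamma(s)\simeq SL_n(\Z/s\Z)$. Therefore $\widehat{SL_n(\Z)}\simeq\varprojlim_s SL_n(\Z/s\Z)\simeq SL_n(\widehat{\Z})$.
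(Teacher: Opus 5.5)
The paper gives no proof of this statement; it simply cites Bass--Lazard--Serre. Your Steps 1 and 3 are correct and standard, so everything rests on Step 2, and the decisive step there is missing. Moving $d$ to a prime $p$ and using that $x\mapsto[x,p]$ factors through the cyclic group $(\Z/p\Z)^*$ only produces relations $[c,d]^{k-1}=1$. These are bounds on the order of the symbol, never triviality. (If you also replace $c$ by $s\ell$ with $\ell$ a suitably chosen prime, you can bring the order down to $2$.) In fact nothing on your list can give triviality. Well-definedness, bimultiplicativity, the two invariances and the Euclidean algorithm all hold verbatim for $\Z[i]$, and Dirichlet's theorem has an analogue there for prime elements in residue classes. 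Yet by Bass--Milnor--Serre the congruence kernel of $SL_n(\Z[i])$, $n\ge 3$, is $\mu_4\neq 1$. So for suitable $s$ the finite-index subgroup $E_n(s\Z[i])$ contains no principal congruence subgroup at all, in particular not $\Gamma(s^2)$. Your ``I expect the quotient to be trivial on $\Gamma(s^2)$'' is therefore exactly the unproved point, and passing from $\Gamma(s)$ to $\Gamma(s^2)$ does not help.

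The missing ingredient is the real embedding of $\Q$, that is, signs. One way to finish is as follows.
\begin{itemize}
\item It suffices to treat $8\mid s$, since $E_n(s\Z)\supseteq E_n(8s\Z)$.
\item Once $[x,\pm p]^2=1$, the characters $x\mapsto[x,\pm p]$ of $(\Z/p\Z)^*$ are trivial on squares. They are also trivial on $-1$, since e.g.\ $[p-1,p]=[p-1,1]=1$. Hence $[c,d]=[-c,d]$ and you may assume $c<0$.
\item Using $d\mapsto d+tc$ for \emph{all} $t\in\Z$, write $[c,d]=[c,p]=[c,-p']$ with $p,p'>0$ primes.
\item With $c<0$, $8\mid c$ and $d\equiv 1 \pmod 8$, quadratic reciprocity gives $\left(\frac{c}{p}\right)=-\left(\frac{c}{p'}\right)$. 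So $c$ is a square modulo $p$ or modulo $p'$, and $[c,d]=1$.
\end{itemize}
This gives $\Gamma(s)=E_n(s\Z)$ for $8\mid s$, which is more than you need.

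The argument uses the invariance $d\mapsto d+tc$ with $t$ arbitrary, and your bullet has the invariances swapped. With $d\equiv 1$ and $c\equiv 0 \pmod s$ they are $c\mapsto c+std$ and $d\mapsto d+tc$, $t\in\Z$; the map $c\mapsto c+td$ with arbitrary $t$ leaves the domain. The second invariance is not obtained by multiplying by an element of $E_n(s\Z)$, because $e_{21}(t)\notin E_n(s\Z)$. It requires that $\Gamma(s)/E_n(s\Z)$ be central in $SL_n(\Z)/E_n(s\Z)$. That centrality, and the reduction $\Gamma(s)=E_n(s\Z)\Gamma_2(s)$ needed to make your symbol defined on all of $\Gamma(s)$, are where $n\ge 3$ genuinely enters, and they must also be proved.
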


By the above result  for $n \geq 3$ the finite group quotients of $SL_n(\Z)$ have a very specific form. Note this does not hold for $SL_2(\Z)$, since it is virtually free.

\begin{theorem} \label{SLn}
Let $G = SL_n( \Z)$ for $n \geq 3$. Then   the map  $H_2(G, \Z) \to \varprojlim_s H_2(G_s, \Z)$  is an isomorphism. In particular, $G \otimes G  \simeq G \wedge G$ are residually finite.
\end{theorem}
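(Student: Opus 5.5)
The plan is to compute both sides of the map $H_2(G, \Z) \to \varprojlim_s H_2(G_s, \Z)$ via algebraic $K$-theory. I would check it is an isomorphism at a finite stage that is cofinal in the inverse system, and then deduce residual finiteness from Theorem \ref{old1} a2).

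For the deduction, the Bass--Lazard--Serre theorem says the congruence kernels $Ker(G \to G_s)$ define the profinite topology. Hence $\varprojlim_U H_2(G/U, \Z) = \varprojlim_s H_2(G_s, \Z)$, and injectivity of $\alpha$ together with residual finiteness of $SL_n(\Z)$ gives residual finiteness of $G \wedge G$ by Theorem \ref{old1} a2). Moreover $G$ is perfect for $n \geq 3$, so $G/G' = 1$ and $\Gamma(G/G') = 0$. Since $\Delta(G)$ is a quotient of $\Gamma(G/G')$, it is trivial, and therefore $G \otimes G \simeq G \wedge G$.

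For the isomorphism, first take $n \geq 5$. By Proposition \ref{K2} and stability (note $\Z/s\Z$ has Krull dimension $0$), we get $H_2(G_s, \Z) \simeq K_2(n, \Z/s\Z) \simeq K_2(\Z/s\Z)$. This uses that $SL_n(\Z/s\Z) = E_n(\Z/s\Z)$, which holds because $\Z/s\Z$ is semilocal. Likewise $H_2(G, \Z) \simeq K_2(\Z) \simeq \Z/2\Z$ by Corollary \ref{K3}. These identifications are natural in the ring, since they come from the functorial maps $St_n(R) \to E_n(R)$.

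By the Chinese remainder theorem, $K_2(\Z/s\Z) \simeq \prod_{p^k \| s} K_2(\Z/p^k\Z)$. Proposition \ref{K4} then shows this is $0$ unless $4 \mid s$, and $\Z/2\Z$ when $4 \mid s$. The multiples of $4$ are cofinal, and for $4 \mid s \mid t$ the transition map $K_2(\Z/t\Z) \to K_2(\Z/s\Z)$ reduces to $K_2(\Z/2^a\Z) \to K_2(\Z/2^b\Z)$ with $a \geq b \geq 2$. So it suffices to show that the Steinberg symbol $\{-1,-1\}$, which generates $K_2(\Z)$ by Milnor, has nontrivial image in $K_2(\Z/2^b\Z)$ for every $b \geq 2$. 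Then all maps in the system are isomorphisms of groups of order $2$, the limit is $\Z/2\Z$, and $\alpha$ is an isomorphism. The nontriviality of $\{-1,-1\}$ mod $4$ I would take from the computation of $K_2(\Z/4\Z)$ underlying \cite[Thm. III.5.11.1]{Kbook}, where the generator is exactly $\{-1,-1\}$. Alternatively, one can pass to $\Z_2$ and use the Hilbert symbol $(-1,-1)_2 = -1$.

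The main obstacle is the case $n = 3, 4$, where Proposition \ref{K2} is unavailable and $H_2(SL_n(\Z), \Z)$ is no longer $K_2(\Z)$: by van der Kallen it is $\Z/2\Z \times \Z/2\Z$. Here I would use the known universal central extensions of $SL_3(\Z)$ and $SL_4(\Z)$ together with the Schur multipliers of $SL_n(\Z/2^b\Z)$ for small $n$. The key input is the exceptional multiplier of $SL_3(\Z/2\Z) \simeq PSL_2(7)$ and $SL_4(\Z/2\Z) \simeq A_8$, both $\Z/2\Z$. The goal is to show that the extra $\Z/2\Z$ is detected already modulo $2$, or modulo $4$, and persists compatibly in the limit. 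Concretely, I would exhibit two classes in $H_2(G_4, \Z)$ pulled back from the two generators and check that the restriction maps in the system are isomorphisms for $4 \mid s$. If only injectivity can be established there, that still suffices for the residual finiteness conclusion via Theorem \ref{old1} a2).
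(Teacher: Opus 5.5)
\textbf{Verdict: genuine gap.} The statement covers all $n \geq 3$, but your proposal proves nothing for $n = 3, 4$.

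For $n \geq 5$ your argument is sound and differs from the paper's. You prove injectivity of $H_2(G,\Z)\to H_2(G_s,\Z)$ for $4\mid s$ by following the symbol $\{-1,-1\}$ into $K_2(\Z/4\Z)$, whereas the paper proves surjectivity.

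For $n = 3, 4$ you only list things one might try. The inputs you name cannot settle it on their own. $H_2(SL_3(\Z/2\Z),\Z)$ and $H_2(SL_4(\Z/2\Z),\Z)\simeq H_2(A_8,\Z)$ are cyclic of order $2$. So reduction mod $2$ can never be injective on $H_2(G,\Z)\simeq \Z/2\Z\times\Z/2\Z$. You also supply neither of the two things actually needed:
\begin{itemize}
\item the order of $H_2(G_s,\Z)$ for $4\mid s$ (the Steinberg-group description of the Schur multiplier is unavailable for $n=3,4$, so $K_2$ does not give it);
\item any statement about the maps $\mu_s$.
\end{itemize}
Your fallback that injectivity alone would suffice is true for residual finiteness. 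It does not give the isomorphism the theorem asserts.

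The missing idea is the one the paper uses uniformly for all $n \geq 3$, and it requires no knowledge of explicit generators. Since $G$ is perfect, the five-term exact sequence of $1\to N_s\to G\to G_s\to 1$ reads
\[ H_2(G,\Z)\stackrel{\mu_s}{\longrightarrow} H_2(G_s,\Z)\longrightarrow N_s/[G,N_s]\longrightarrow 0. \]
So $\mu_s$ is surjective as soon as $N_s=[G,N_s]$, and this is exactly what Bass--Milnor--Serre (Thm.~4.1, Cor.~4.3) give for $n\geq 3$.

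Combine this with equality of orders for $4\mid s$: van der Kallen's $\Z/2\Z\times\Z/2\Z$ on both sides for $n=3,4$, and $K_2(\Z/s\Z)\simeq \Z/2\Z\simeq K_2(\Z)$ for $n\geq 5$. Surjectivity then forces every $\mu_s$ to be an isomorphism. Hence all transition maps between multiples of $4$ are isomorphisms, and $H_2(G,\Z)\to\varprojlim_s H_2(G_s,\Z)$ is an isomorphism. This surjectivity route is precisely what covers the cases $n=3,4$ you left open, and it would also replace your symbol computation for $n\geq 5$.
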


\begin{proof}
Let $m_s$ be an integer divisible by the order of $G_s = SL_n( \Z / s \Z)$ where $s$ is divisible by 4. By the Universal Coefficients Theorem
 there is a short exact sequence
 $$ 0 \to H_2 (G_s , \Z)  \otimes_{\Z} (\z / m_s \Z) \to H_2(G_s, \Z/ m_s \Z) \to Tor_1^{\Z}(H_1(G_s, \Z), \Z / m_s \Z) \to 0$$  
Since $m_s H_2(G_s, \Z) = 0$ and $H_1(G_s, \Z) \simeq G_s^{ab} = 0$ we get  $  H_2(G_s, \Z/ m_s \Z) \simeq H_2(G_s, \Z)$. By \cite{Kallen} we have
$$  H_2(G_s, \Z/ m_s \Z) \simeq H_2(G_s, \Z) \simeq (\Z/ 2 \Z) \times (\Z / 2 \Z) \simeq H_2(G, \Z) \hbox{ for } n = 3 \hbox{ and } n = 4$$
Furthermore
$$  H_2(G_s, \Z/ m_s \Z) \simeq H_2(G_s, \Z) \simeq K_2(\Z/ s \Z, n) \simeq K_2(\Z/ s \Z)  \hbox{ for } n \geq 5$$
where the second isomorphism follows from  Proposition \ref{K2}  and the last  isomorphism is the stability of $K_2$. 

Suppose $s = 2^{z_1} \ldots p_i^{z_i}$, with $z_1 \geq 2$, is the decomposition as a product of prime integers, hence
$$\Z/ 2 \Z \simeq K_2(\Z/ 2^{z_1} \Z) \simeq K_2(\Z/ s \Z)\simeq H_2(G_s, \Z) \simeq H_2(G_s, \Z/ m_s \Z) $$
where the first equality is Proposition  \ref{K4}, the second follows from the Chinese reminder theorem and  Proposition \ref{K4}. Note that if $R_1, \ldots, R_i$ are rings ( with unity) then $K_2(R_1 \times \ldots \times R_i) \simeq K_2(R_1) \times \ldots \times K_2(R_i)$.

\medskip
{\bf Claim.} $ \mu_s : H_2(G, \Z) \to H_2(G_s, \Z)  \hbox{ is an  isomorphism for } n \geq 3.$  In particular, the map $ H_2(G, \Z) \to \varprojlim_s H_2(G_s, \Z)$ is an isomorphism and by Lemma \ref{old1}, a2) $G \wedge G$ is residually finite. 

\medskip
Proof.  Since $|H_2(G, \Z)| = | H_2(G_s, \Z)|$ it suffices to show $\mu_s$ is surjective.
Consider  the short exact sequence $ 1 \to N_s \to G \to G_s \to 1$ and the corresponding 5-term exact sequence in homology  
$$ H_2(G, \Z) \to_{\mu_s} H_2 (G_s, \Z)  \to N_s/ [G, N_s] \to G^{ab} \to G_s^{ab} \to 0$$ 
Note that  $G^{ab} = G/ G' $ is trivial.
Then $\mu_s$ is surjective is equivalent to 
 $N_s = [G,N_s]$. The last equality follows from   \cite[Thm. 4.1, Cor. 4.3]{B-M-S}.  This completes the proof of the claim.
 
 \medskip

 Note that by Malcev theorem \cite{Mal}, every finitely generated linear group (over a commutative ring) is residually finite,   $G = SL_n(\Z)$ is residually finite.  
 
Finally since $\Delta(G) = Ker ( G \otimes G \to G \wedge G)$, $\Delta(G)$ is a quotient of Whitehead quadratic  functor $\Gamma(G^{ab})$ and $G^{ab} = G/ G'$ is the trivial group
 we conclude that $G \otimes G \simeq G \wedge G$ is residually finite.
 \end{proof}

\begin{lemma} \label{LHSspectral} Let $P = G \rtimes A$, where $A \simeq \Z / 2 \Z$ and $G = G'$. Then the inclusion map $G \to P$ induces an isomorphism $H_0(A, H_2(G, \Z)) \simeq  H_2(P, \Z)$.
\end{lemma}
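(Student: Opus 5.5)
The plan is to use the Lyndon--Hochschild--Serre spectral sequence of the split extension $1 \to G \to P \to A \to 1$ in integral homology,
$$E^2_{pq} = H_p(A, H_q(G, \Z)) \Longrightarrow H_{p+q}(P, \Z),$$
and to show that in total degree $2$ only the term $E^2_{0,2} = H_0(A, H_2(G,\Z))$ is nonzero and that it survives unchanged to $E^\infty$. The edge map $E^2_{0,2} = H_0(A, H_2(G,\Z)) \to H_2(P,\Z)$ is the map induced by the inclusion $G \to P$, passing to $A$-coinvariants. This identification of the edge map is the standard one, and it gives exactly the isomorphism claimed in Lemma \ref{LHSspectral}.

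First I list the terms on the line $p+q=2$.
\begin{itemize}
\item Since $G = G'$, we have $H_1(G,\Z) = G^{ab} = 0$, so $E^2_{1,1} = H_1(A, H_1(G,\Z)) = 0$. In fact the whole row $q=1$ vanishes.
\item Since $A \simeq \Z/2\Z$ is cyclic, $H_2(A,\Z) = 0$, so $E^2_{2,0} = 0$.
\end{itemize}
Hence $H_2(P,\Z)$ has a filtration whose only possibly nonzero graded piece is $E^\infty_{0,2}$. Therefore $H_2(P,\Z) \simeq E^\infty_{0,2}$, and this isomorphism is realised by the edge map.

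Next I check that $E^\infty_{0,2} = E^2_{0,2}$. Nothing leaves $E_{0,2}$, because all differentials out of the column $p=0$ land in negative columns. The only differentials that can hit $E_{0,2}$ are the following two.
\begin{itemize}
\item $d_2 \colon E^2_{2,1} \to E^2_{0,2}$. This vanishes because $E^2_{2,1} = H_2(A, H_1(G,\Z)) = 0$.
\item $d_3 \colon E^3_{3,0} \to E^3_{0,2}$. The source is a subquotient of $H_3(A,\Z) \simeq \Z/2\Z$, so it need not be zero a priori.
\end{itemize}

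The only real obstacle is the differential $d_3$, and this is where splitting is used. Because $P = G \rtimes A$, the projection $P \to A$ has a section $A \to P$. Hence the edge map $H_k(P,\Z) \to H_k(A,\Z) = E^2_{k,0}$ is surjective for every $k$. That edge map factors as
$$H_k(P,\Z) \twoheadrightarrow E^\infty_{k,0} \hookrightarrow E^2_{k,0}.$$
Surjectivity therefore forces $E^\infty_{k,0} = E^2_{k,0}$, so every differential leaving the bottom row is zero. In particular $d_3 \colon E^3_{3,0} \to E^3_{0,2}$ vanishes.

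Combining these steps, $E^\infty_{0,2} = E^2_{0,2} = H_0(A, H_2(G,\Z))$. The map induced by inclusion, $H_0(A, H_2(G,\Z)) \to H_2(P,\Z)$, is therefore an isomorphism.
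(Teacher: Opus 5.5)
Your proposal is correct and follows essentially the same route as the paper: the LHS spectral sequence with $E^2_{2,0}=H_2(A,\Z)=0$, the vanishing row $q=1$, and the splitting $A\to P$ forcing every differential out of the bottom row (in particular $d_3\colon E^3_{3,0}\to E^3_{0,2}$) to be zero. Your explicit identification of the resulting isomorphism with the edge map induced by the inclusion $G\to P$ is a small addition that the paper leaves implicit.
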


\begin{proof}
Consider the LHS spectral sequence $E^2_{p,q} = H_p(A, H_q (G, \Z))$ converging to $H_{p+q} (P, \Z)$. Note that $$E_{2,0}^2 = H_2(A, \Z) = 0, \ E_{p,1}^2 = H_p(A, H_1(G, \Z)) = H_p(A, G^{ab}) = 0$$ 
The convergence of the spectral sequence implies $H_2(P, \Z) \simeq E_{0,2}^{\infty}$.
Furthermore since $A$ is a retract  the map $H_m(P, \Z) \to H_m(A, \Z)$, induced by the projection $P \to A$,  splits and hence $H_m(A, \Z) \simeq E_{m,0}^2$ embeds in $H_m(P, \Z)$. This implies that $$E_{m,0}^2 = E_{m,0}^{\infty} \hbox{ for all } m \geq 1$$ In particular all differentials that start at $E_{m,0}^r$, $r \geq 2, m \geq 1$,  are the zero maps. Recall $E_{2,1}^2 = 0$.
 Comparing bidegrees of the differentials we obtain  that $E_{0,2}^2 = E_{0,2}^3 = E_{0,2}^4 = E_{0,2}^{\infty}$,  thus $H_2(P, \Z) \simeq E_{0,2}^2 = H_0(A, H_2(G, \Z))$.
\end{proof}

\begin{theorem}
Let $P = GL_n( \Z)$ for  $n \geq 3$. Then the map  $H_2(P, \Z) \to \varprojlim_U H_2( P/U, {\Z}
)$ is an isomorphism, where $\widehat{P} \simeq \varprojlim_U P/U$ with $P/ U$ finite groups. In particular $P \wedge P$ is residually finite.  Furthermore $P \otimes P$ is residually finite.
\end{theorem}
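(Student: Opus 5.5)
We reduce to Theorem \ref{SLn} via Lemma \ref{LHSspectral}. Write $P = GL_n(\Z) = G \rtimes A$ with $G = SL_n(\Z)$ and $A = \langle d \rangle \simeq \Z/2\Z$, where $d = \mathrm{diag}(-1,1,\ldots,1)$. Since $n \geq 3$, $G$ is perfect, so Lemma \ref{LHSspectral} gives an isomorphism $H_0(A, H_2(G,\Z)) \simeq H_2(P,\Z)$ induced by inclusion. Every finite index subgroup of $P$ contains a finite index subgroup of $G$, which by Bass--Lazard--Serre contains some $N_s = Ker(G \to G_s)$. Moreover $N_s$ is normal in $P$, since it is also the kernel of $GL_n(\Z)\to GL_n(\Z/s\Z)$ intersected with $G$. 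Hence the quotients $P_s = P/N_s \simeq G_s \rtimes A$ (for $s$ divisible by $4$) are cofinal, and $\varprojlim_U H_2(P/U,\Z) \simeq \varprojlim_s H_2(P_s,\Z)$.

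Next we apply the same spectral sequence argument to the finite groups $P_s = G_s \rtimes A$. The group $G_s = SL_n(\Z/s\Z)$ is perfect for $n\geq 3$, since it is generated by elementary matrices, and $A$ is again a retract. So the proof of Lemma \ref{LHSspectral} goes through verbatim and gives $H_2(P_s,\Z) \simeq H_0(A, H_2(G_s,\Z))$, naturally with respect to the projection $P \to P_s$. The Claim in the proof of Theorem \ref{SLn} shows that $\mu_s: H_2(G,\Z) \to H_2(G_s,\Z)$ is an isomorphism of finite groups. It is $A$-equivariant because $G\to G_s$ is $A$-equivariant. Therefore it induces an isomorphism on $A$-coinvariants, and each map $H_2(P,\Z) \to H_2(P_s,\Z)$ is an isomorphism compatible with the transition maps. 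Passing to the limit, $H_2(P,\Z) \to \varprojlim_s H_2(P_s,\Z)$ is an isomorphism. The main point to verify carefully is naturality of the LHS identification and the cofinality of the $N_s$ in $P$; neither looks serious.

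Since $P$ is finitely generated linear, it is residually finite by Malcev. Theorem \ref{old1} a2) then gives that $P\wedge P$ is residually finite.

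For $P\otimes P$ we use Theorem \ref{old1} d). We have $P^{ab}\simeq \Z/2\Z$ via the determinant, since $P' = G' = G$, so $P/P'$ is finitely generated but has $2$-torsion. However $G' = G$ is complemented in $P$ by $A$. Thus Theorem \ref{2tor} a) applies, $P\otimes P \simeq \Delta(P)\times (P\wedge P)$ with $\Delta(P)$ finite, and Theorem \ref{old1} d) yields that $P\otimes P$ is residually finite.
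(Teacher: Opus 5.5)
Your proposal is correct and follows essentially the same route as the paper: the decomposition $P = G \rtimes A$, Lemma \ref{LHSspectral} applied both to $P$ and to the finite quotients $G_s \rtimes A$, the Claim from the proof of Theorem \ref{SLn} giving an isomorphism at each finite level, Theorem \ref{old1} a2) for $P \wedge P$, and the fact that $P' = G$ has a complement in $P$ (which the paper offers as an alternative to the Blyth--Fumagalli--Morigi observation) for $P \otimes P$. Beyond the paper, you spell out two points it leaves implicit: that the $N_s$ are cofinal among finite-index normal subgroups of $P$, and that the spectral-sequence identification is natural and $A$-equivariant.
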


\begin{proof}
Note that $P = G \rtimes A$, where $G = SL_n(\Z)$ and $A$ is a cyclic group of order 2 generated by the matrix $diag(-1,1, \ldots, 1)$. By Lemma \ref{LHSspectral} $$H_2(P, \Z) \simeq H_0(A, H_2(G, \Z)) $$

 Recall that  $G_s = SL_n(\Z/ s \Z)$, hence $\widehat{P}  \simeq \varprojlim_s (G_s \rtimes A)$. Similar to the proof of  Lemma \ref{LHSspectral} $H_2(G_s \rtimes A, \Z) \simeq H_0(A, H_2(G_s, \Z))$ and  by the Claim in the proof of Theorem \ref{SLn}  the map $H_2(G, \Z) \to H_2(G_s, \Z)$, induced by the canonical projection $G \to G_s$, is an isomorphism, hence  
the map  $H_2(G \rtimes A, \Z) \to  H_2(G_s \rtimes A, \Z)$ is an isomorphism. Hence the map 
$$H_2(P, \Z) \to \varprojlim_s  H_2(G_s \rtimes A, \Z)$$ is an isomorphism.
Then  by Theorem \ref{old1}, a2)  $G \wedge G$ is residually finite.
Note that $P^{ab} \simeq A$  and for $x = diag(-1,1, \ldots, 1)$
we have $|x| = |x G'|$, hence by  \cite[Observation, p.87]{B-F-M} we have $P \otimes P \simeq \Delta(P) \times (P \wedge P)$ is residually finite. Alternatively we can use Theorem \ref{2tor} a) to deduce that $P \otimes P$ is residually finite since $P' = G$ has complement in $P$.

\end{proof}

\medskip
  
 \medskip
  \section{Constructible soluble groups, generalized  Baumslag-Solitar groups and others} \label{constr}

   The group $\X(G)$ was defined by Sidki in \cite{Said} by the following presentation
$$\X(G) = \langle G, {G}^{\varphi} | \ [g, {g}^{\varphi}] = 1 \hbox{ for } g \in G \rangle.$$ 
 Recall that $\nu(G)$ has a presentation
 $$\langle G, G^{\varphi} \ | \ [g_1, g_2^{\varphi}]^{g_3} = [g_1, g_2^{\varphi}]^{g_3^{\varphi}} = [g_1^{g_3}, (g_2^{ g_3})^{\varphi}] \rangle$$
By \cite[p.68-69]{Norai} \begin{equation} \label{isoiso}  \nu(G)/ \Delta(G) \simeq \mathcal{X} (G)/ R(G)\end{equation}
where  $\Delta(G)$ be the subgroup of $G \otimes G$ generated by $\{ g \otimes g ~ | ~ g \in G \}$ that can be identified with the subgroup of $\nu(G)$ generated by $\{ [g, {g}^{\varphi}] \   | \ g \in G \}$ and   $R(G) = [G, L(G), G^{\varphi}]$.

   The proof of the following result uses a version of the BNS-invariant of nilpotent-by-abelian groups. We will explain briefly the part of the theory that we will use.
  
  Let $Q$ be a finitely generated abelian group. The character sphere $S(Q) = Hom (Q, \mathbb{R}) \setminus \{ 0 \} / \sim$ where two homomorphisms $\chi_1, \chi_2 \in Hom (Q, \mathbb{R}) \setminus \{ 0 \}$ are equivalent, i.e. $\chi_1 \simeq \chi_2$, if one is obtained by the other by multiplication with a positive real number.  Let $V$ be a finitely generated $\Z Q$-module and following Bieri and Strebel we define
  $$\Sigma_{V}(Q) = \{ [\chi] \ | \ V \hbox{ is finitely generated as } \Z Q_{\chi}-\hbox{module} \}$$
  where $Q_{\chi} = \{ q \in Q \ | \ \chi(q) \geq 0 \}$. By definition $\Sigma_{V}^c(Q) = S(Q) \setminus \Sigma_V(Q)$.
  
  A soluble group is called constructible if it can be built
up from the trivial group using finite extensions and HNN extensions.  An HNN extension with a base group $B$, associated subgroups $B_1$ and $B_2$ and stable letter $t$ is given by a presentation $\langle B, t \ | \ \phi(b) = t^{-1} b t \hbox{ for }b \in B_1 \rangle$ where $\phi: B_1 \to B_2 $ is an isomorphism between the subgroups $B_1$ and $B_2$ of $B$. Since in our case the HNN extension should be a soluble group, it does not contain  free non-abelian subgroups. Hence the HNN extension should be ascending i.e. one of the associated subgroups should be equal to the base group. 
  
\begin{theorem} \label{Bi-Str}  \cite[Thm. 5.2]{B-S} Let $N \to G \to Q$ be a short exact sequence of groups with $Q$ abelian,
$N$ nilpotent and $G$ finitely generated. Set $V = N/N'$. Then $G$ is constructible if and only
if $\Sigma^c_V(Q)$ is contained in an open hemisphere of $S(Q)$. In particular $G$ is constructible if
and only if  $ G/N'$ is constructible.
  \end{theorem}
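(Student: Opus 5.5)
The statement has two halves: an ``iff'' characterising constructibility through the invariant $\Sigma_V(Q)$, and the consequence that $G$ is constructible exactly when $G/N'$ is. I would take the first half as the Bieri--Strebel criterion for metabelian-type quotients, pushed up to nilpotent $N$ along the lower central series. The second half would then be a formal consequence, once we note that the invariant only sees $V = N/N'$.

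I would begin with the ``in particular'' clause, assuming the main equivalence. Consider the extension $N/N' \to G/N' \to Q$. Its kernel $N/N'$ is abelian, hence nilpotent, and its abelianization is $N/N'$ itself. So the module attached to $G/N'$ is the same $\Z Q$-module $V$, with the same $Q$-action. Applying the equivalence to both $G$ and $G/N'$ produces the identical condition, namely that $\Sigma^c_V(Q)$ lies in an open hemisphere. Hence $G$ is constructible if and only if $G/N'$ is.

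For the main equivalence I would first treat the case $N$ abelian, so that $G$ is abelian-by-abelian. In this case I would split the argument into the two directions.

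\emph{Hemisphere implies constructible.} Suppose $\Sigma^c_V(Q)$ lies in the open hemisphere $\{[\chi] : \chi(q_0) > 0\}$. Then the compactness of $\Sigma^c_V(Q)$ and the openness of $\Sigma_V(Q)$ let us choose finitely many characters, or really a single $q_0$ together with a finitely generated subgroup. The aim is to exhibit $G$, up to finite index, as an ascending HNN extension $\langle B, t\rangle$ with $t$ lifting a power of $q_0$. Here $B$ is generated by a finitely generated $\Z Q_0$-submodule $W \subseteq V$ (with $Q_0$ the kernel of a character) and lifts of generators of $Q_0$, and $W^t \subseteq W$. The base $B$ then has smaller torsion-free rank of $Q$, so induction on $\mathrm{rk}(Q)$ gives constructibility of $B$, and hence of $G$. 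The key input is the Bieri--Strebel fact that $V$ is finitely generated over the monoid ring $\Z Q_\chi$ for all $\chi$ with $\chi(q_0) \ge 0$ near the hemisphere; this is what makes the submodule $W$ stable under $t$.

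\emph{Constructible implies hemisphere.} A constructible soluble group is of type $FP_\infty$, in particular finitely presented. By the Bieri--Strebel theorem, finite presentability of a metabelian group forces $\Sigma^c_V(Q)$ to contain no pair of antipodal points, so the condition on $\Sigma^c$ is at least necessary in that weaker sense. To get the full hemisphere condition I would use the stronger fact that constructibility gives $FP_m$ for all $m$. The $\Sigma^m$-conjecture direction proved by Bieri--Harlander--Groves then forces every finite subset of $\Sigma^c_V(Q)$, of any size $m$, to have $0$ outside its convex hull, which is the open hemisphere condition.

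For nilpotent $N$ I would induct on the nilpotency class using the series $\gamma_i(N)$. Each factor $\gamma_i(N)/\gamma_{i+1}(N)$ is a $\Z Q$-module that is a quotient of $V^{\otimes i}$. Its $\Sigma^c$ is contained in the union of sums of $i$ points of $\Sigma^c_V(Q)$, so it stays in the same open hemisphere whenever $\Sigma^c_V(Q)$ does. The HNN construction from the abelian case then lifts, with $t$ contracting every factor. The main obstacle is the necessity direction: extracting the full hemisphere condition, not merely the absence of antipodal pairs, from constructibility. There I would rely on the $FP_\infty$ theory rather than attempt a direct argument.
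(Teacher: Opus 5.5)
The paper gives no proof of this statement; it is quoted from Bieri--Strebel \cite[Thm.~5.2]{B-S}, so your sketch has to stand on its own. Your derivation of the last clause from the main equivalence is fine, since $G/N'$ carries the same module $V$. The equivalence itself, however, has genuine gaps.

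\emph{Sufficiency.} Your induction on $\operatorname{rk} Q$ needs the base $B$ to satisfy the hemisphere condition for its own invariant $\Sigma^c_W(Q_0)$. You never check this, and for a bad choice of $Q_0$ it fails.

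Take $V=\Z[1/2]^2$ and $Q=\langle a\rangle\times\langle t\rangle$, with $a$ acting by $\mathrm{diag}(2,1/2)$ and $t$ by $\mathrm{diag}(2,2)$. Then $\Sigma^c_V(Q)=\{[\chi_1],[\chi_2]\}$, where $\chi_1(a)=\chi_1(t)=1$, $\chi_2(a)=-1$ and $\chi_2(t)=1$; this set lies inside $\{\chi(t)>0\}$. Now use stable letter $t$ and $Q_0=\langle a\rangle$. Every $\Z Q_0$-submodule of $V$ is $t^{\pm1}$-invariant, because $t$ acts as $2$ and $t^{-1}$ acts as $a+a^{-1}-2$. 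Hence $W=V$ and $B=V\rtimes\langle a\rangle$, whose invariant is all of $S(Q_0)$, so $B$ is not even finitely presented.

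To rescue your scheme you would need two things:
\begin{itemize}
\item a basis of $Q$ (mod torsion) on which every point of $\Sigma^c_V(Q)$ is positive, which exists by compactness;
\item a lemma bounding $\Sigma^c_W(Q_0)$ by restrictions to $Q_0$ of points of $\Sigma^c_V(Q)$.
\end{itemize}
That lemma is the real content, and it is absent. There is also a sign slip: under your hypothesis it is $\{\chi(q_0)\le 0\}$, not $\{\chi(q_0)\ge 0\}$, that lies in $\Sigma_V(Q)$.

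It is simpler to reverse the order. Choose $t$ with $\chi(t)>0$ for all $[\chi]\in\Sigma^c_V(Q)$. Since $\{[\chi]:\chi(t)=0\}\subseteq\Sigma_V(Q)$, Bieri--Strebel's finite-generation criteria give that $V$ is finitely generated over $\Z[t^{\pm1}]$. Restricting to $\langle t\rangle$, it is then finitely generated over $\Z[t^{-1}]$. So $\langle V,t\rangle$ is an ascending HNN extension of a finitely generated abelian group, normal in $G$ with finitely generated abelian quotient. In the example above, the same $t$ works this way.

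\emph{Nilpotent case and necessity.} Controlling $\Sigma^c$ of the factors $\gamma_i(N)/\gamma_{i+1}(N)$ does not by itself produce an HNN decomposition, and ``$t$ contracting every factor'' is not an argument. Moreover, in your corank-one scheme the base would have $(B\cap N)_{ab}$ not necessarily embedding in $V$, so the abelian-case lemma would not even apply.

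The missing tool is elementary: in a nilpotent group, $HN'=N$ forces $H=N$. Combined with the reversed construction, it shows that $N$ is generated by the conjugates of finitely many elements under non-negative powers of (a lift of) $t^{-1}$. Hence $\langle N,t\rangle$ is an ascending HNN extension of a finitely generated nilpotent group, and $G$ is constructible.

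For necessity you only treat abelian $N$. For general $N$ you need that $G$ constructible implies $G/N'$ constructible. This is true, because quotients of constructible soluble groups are constructible. The proof is an induction on the construction: a quotient of $B\ast_t$ is either an ascending HNN extension of the image of $B$, or contains a conjugate of that image with finite index. But you do not prove this, and you cannot take it from the last clause, since you derived that clause from the theorem.

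Finally, the ``$FP_m\Rightarrow m$-tame'' input needs a precise reference in the generality you use. By Carath\'eodory, $m=\operatorname{rk}Q+1$ suffices.
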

We observe that in the above theorem we view $V$ as a $\Z Q$-module  via the $Q$-action induced  by the conjugation action of $G$. 
  
\begin{theorem} \label{sol}
Let $G$ be a soluble group of type $FP_{\infty}$. Then for almost all prime integers  $p$ the groups $\X(G)$, $\nu(G)$, $G \wedge G$ are virtually residually $p$-finite.
\end{theorem}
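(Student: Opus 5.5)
The plan is to reduce everything to showing that $\X(G)$ is itself a soluble group of type $FP_{\infty}$, i.e.\ a constructible soluble group. Then Platonov's theorem \cite{Plat} gives virtual residual $p$-finiteness of $\X(G)$ for almost all primes $p$. The claims for $\nu(G)$ and $G\wedge G$ would then follow from the structural relations between $\X(G)$, $\nu(G)$ and $G\otimes G$.

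\emph{Step 1 (reduction).} By \cite{Krop}, $G$ is constructible. Then $G$ is finitely presented, and by \cite{Plat} it is virtually residually $p$-finite for almost all $p$. By \cite{Said} and \cite{B-K}, $\X(G)$ is soluble and finitely presented. Sidki's structure theory gives a normal subgroup $D(G)=[G,G^{\varphi}]$ of $\X(G)$ with the following properties: $\X(G)/D(G)\simeq G\times G$, the subgroup $L(G)=\langle g^{-1}g^{\varphi}\rangle$ satisfies $\X(G)/L(G)\simeq G$, and $D(G)\cap L(G)$ is abelian and central in $D(G)L(G)$. From these I would show that $\X(G)$ is constructible. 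Constructibility is closed under finite extensions, finite index subgroups and quotients in the relevant sense. Since $G$ is virtually torsion-free nilpotent-by-abelian (Strebel), pass to a finite index subgroup $G_0$ with $G_0$ nilpotent-by-abelian. Then $\X(G_0)$ is, up to finite index, a finite index subgroup of $\X(G)$ that is nilpotent-by-abelian.

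\emph{Step 2 (BNS criterion).} Apply Theorem \ref{Bi-Str} to a nilpotent-by-abelian finite index subgroup $E$ of $\X(G)$, with $N\to E\to Q$, $Q$ abelian. It suffices to check that $E/N'$ is constructible, i.e.\ that $\Sigma^c_{N/N'}(Q)$ lies in an open hemisphere. The module $N/N'$ is filtered by subquotients coming from three sources:
\begin{itemize}
\item copies of $V=N_0/N_0'$ for $G_0$, pulled back along the two projections $\X\to G$;
\item a piece from $D\cap L$;
\item a piece from $W(G)$-type terms, a quotient of a tensor product of $V$-type modules.
\end{itemize}
For each piece, $\Sigma^c$ is contained in the pullback of $\Sigma^c_V(Q_G)$ under one of the coordinate maps $Q\to Q_G$, or in a "sum" built from these. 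Using that $\Sigma^c_V(Q_G)$ lies in an open hemisphere, one checks that the union lies in an open hemisphere of $S(Q)$.

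I expect this step to be the main obstacle. The conditions from the two copies of $G$ may point in opposite directions, and the tensor-type pieces require a $\Sigma$-estimate for tensor products of modules, namely that $\Sigma^c_{A\otimes B}\subseteq$ the cone generated by $\Sigma^c_A\cup\Sigma^c_B$. My first attempt would use the relation $[g,g^{\varphi}]=1$, which identifies the two characters on the relevant pieces, so that everything is controlled by the single hemisphere of $G$.

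\emph{Step 3 (conclusion).} Once $\X(G)$ is constructible soluble, it is of type $FP_{\infty}$ by \cite{Krop}, and by \cite{Plat} it is virtually residually $p$-finite for almost all $p$. From (\ref{isoiso}), $\nu(G)/\Delta(G)\simeq \X(G)/R(G)$. The group $\Delta(G)$ is central and finitely generated, since $G^{ab}$ is finitely generated. The quotient $\X(G)/R(G)$ is again constructible, because quotients of constructible soluble groups by normal subgroups with the $\Sigma$-condition preserved remain constructible; it is handled by the same computation as Step 2. Hence $\nu(G)$ is a central extension by $\Delta(G)$ of a constructible group. Running the Step 2 argument directly for $\nu(G)$ shows that $\nu(G)$ is constructible, hence virtually residually $p$-finite for almost all $p$. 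Finally, $G\otimes G\simeq[G,G^{\varphi}]\le\nu(G)$ is a subgroup, so it inherits virtual residual $p$-finiteness. Moreover $G\otimes G$ is finitely generated, because constructible groups have all subgroups finitely generated up to the relevant finiteness. Its quotient $G\wedge G$ by the central finitely generated abelian group $\Delta(G)$ is handled in one of two ways. One option is to note that $G\wedge G\simeq F'/[F,R]$ is again soluble constructible by the same $\Sigma$ argument. The other is to use Theorem \ref{old1} c2) for a residually $p$-finite finite index subgroup of $G$, together with $G\in\mathcal{A}_2$.
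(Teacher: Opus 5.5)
\textbf{Gaps in the proposal.}

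Your overall architecture matches the paper's. You show that $\X(G)$ and $\nu(G)$ are soluble of type $FP_\infty$, hence constructible and linear over $\Q$ by \cite{Strebel}; you need this linearity before invoking \cite{Plat}. You then pass to subgroups.

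However, the step that carries all the weight, constructibility of $\X(G)$, is not proved in your Step 2. The hemisphere check is left as an anticipated obstacle, supported only by a heuristic about $[g,g^{\varphi}]=1$ and an unproved tensor-product estimate for $\Sigma^c$. The paper does not attempt this computation; it cites Kochloukova--Sidki \cite{K-S}, where exactly this is shown.

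Your Step 1 reduction is also wrong as stated. For $G_0\le G$ of finite index, the image of $\X(G_0)$ in $\X(G)$ need not have finite index. Take $G=D_\infty$ and $G_0\cong\Z$. Then $\X(G_0)\cong\Z^2$, while $\X(G)$ maps onto $\{(g_1,g_2,g_3)\in G^3 \mid g_1g_2^{-1}g_3\in G'\}$, which has index $4$ in $D_\infty^3$ and Hirsch length $3$. So the image of $\X(G_0)$, a quotient of $\Z^2$, has infinite index. The underlying problem is that $G_0'$ can have infinite index in $G'$.

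For $\nu(G)$ no new $\Sigma$-computation is needed. Quotients of constructible soluble groups are constructible, because the hemisphere condition passes to quotients. Hence $\nu(G)/\Delta(G)\simeq\X(G)/R(G)$ is of type $FP_\infty$. Since $\Delta(G)$ is finitely generated abelian, $\nu(G)$ is soluble of type $FP_\infty$ by closure of $FP_\infty$ under extensions, and so constructible by \cite{Krop}.

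Your treatment of $G\wedge G$ fails. The claims that $G\otimes G$ is finitely generated and that $G\wedge G$ is constructible are false. For $G=BS(1,2)$ one has $H_2(G,\Z)=0$, so $G\wedge G\simeq G'\simeq\Z[1/2]$ and $G\otimes G\simeq\Z\times\Z[1/2]$. Subgroups of constructible groups need not be finitely generated.

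Your second option does not work either, for two reasons:
\begin{itemize}
\item $G\in\mathcal{A}_2$ is a statement about the profinite completion $\widehat{G}$. Theorem \ref{old1} c2) instead needs injectivity of $H_2(G_1,\Z)\to H_2(\widehat{(G_1)}_p,\Z_p)$ for the pro-$p$ completion.
\item Even if $G_1\wedge G_1$ were residually $p$-finite, the natural map $G_1\wedge G_1\to G\wedge G$ need be neither injective nor of finite-index image, so nothing transfers to $G\wedge G$. Virtual residual $p$-finiteness also does not pass to central quotients in general.
\end{itemize}

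The missing observation is the one the paper uses. Since $\Delta(G)\le G\otimes G\simeq[G,G^{\varphi}]\le\nu(G)$, the group $G\wedge G=(G\otimes G)/\Delta(G)$ is a subgroup of $\nu(G)/\Delta(G)\simeq\X(G)/R(G)$. That group is constructible, and virtual residual $p$-finiteness for almost all $p$ is inherited by subgroups.
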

  
  \begin{proof}
 By a corollary to  \cite[Thm. B]{Krop}  a group that is soluble of type $FP_{\infty}$ is constructible. By \cite[Thm. 5.2]{B-S} constructible soluble groups have a subgroup of finite index $M$ that is nilpotent-by-abelian and the complements of the first BNS (Bieri-Newmann-Strebel) invariant of $M$  in the character sphere $S(M)$ is inside an open hemisphere of $S(M)$,  see Theorem \ref{Bi-Str}.  The converse holds, if a soluble group has a subgroup of finite index $M$ with the above property then $M$ is costructible, hence of type $FP_{\infty}$. This was used in \cite{K-S} to show that if $G$ is soluble, $FP_{\infty}$ then $\X(G)$ is soluble, $FP_{\infty}$.

 Note that the above description using the BNS invariant passes to quotients, hence quotients of soluble, $FP_{\infty}$ groups are of type $FP_{\infty}$ (though for general groups this does not hold). Then $$\nu(G)/ \Delta(G) \simeq \X(G)/ R(G) \hbox{  is }FP_{\infty}.$$ Since $G$ is finitely generated and $\Delta(G)$ is a quotient of $\Gamma(G^{ab})$ we deduce that $\Delta(G)$ is finitely generated, abelian , hence $FP_{\infty}$. Thus  both $\nu(G)/ \Delta(G)$ and $\nu(G)$  are  soluble, $FP_{\infty}$, hence constructible.
 
  By \cite{Strebel} or \cite[Subsection 11.2]{Le-Ro}  a constructible soluble group is linear ( over $\mathbb{Q}$), hence by Malcev´s result is  residually finite.  By \cite{Plat}  if $G \leq GL_n(K)$, where $K$ is a field of zero characteristic then all but finitely many prime integers $p$ we have that $G$ is virtually residually $p$-finite. Hence $\X(G)$, $\nu(G)$  and $\nu(G)/ \Delta(G)$  are virtually residually $p$-finite for all but finitely many primes $p$.
 Since $G \otimes G \simeq [G, G^{\varphi}]$ is a subgroup of $\nu(G)$  and $G \wedge G$ is a subgroup of $\nu(G)/ \Delta(G)$ we obtain that $G \otimes G$ and $G \wedge G$ are  virtually residually $p$-finite for all but finitely many primes $p$.  
  \end{proof}
  
  Following \cite{L} we call a group $G$ highly residually finite if $G$ is residually finite and $G \in {\mathcal{A}}_2$.

  \begin{prop} \label{Highly-res-fin} 
  Let $G$ be a  soluble group of type $FP_{\infty}$. Then $G$ is highly residually finite. In particular $G \in \mathcal{A}_2$.
  \end{prop}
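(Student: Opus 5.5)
Residual finiteness is already settled: by Kropholler \cite{Krop} $G$ is constructible, constructible soluble groups are linear over $\mathbb{Q}$ \cite{Strebel}, and Malcev's theorem \cite{Mal} then gives residual finiteness. So the real content is $G \in \mathcal{A}_2$. I will prove this by induction on the construction of $G$ as a constructible group, using the closure properties of $\mathcal{A}_2$ recorded in Section 2.

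\textbf{Finite extensions.} If $G$ contains a finite index subgroup in $\mathcal{A}_2$, then Lemma \ref{Serre2} a) gives $G \in \mathcal{A}_2$. So it suffices to treat an ascending HNN extension $G = \langle B, t \mid t^{-1}bt = \phi(b),\ b \in B\rangle$, where $\phi: B \to B$ is injective and the base $B$ is constructible. The subgroup classes arising in the construction are again constructible, hence $FP_\infty$, so by induction we may assume $B \in \mathcal{A}_2$ and $B$ residually finite.

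\textbf{HNN step: first approach.} Write $N$ for the normal closure of $B$. Then $G/N \cong \mathbb{Z}$, which lies in $\mathcal{A}_2$ (it is good), and $N = \bigcup_k t^{k} B t^{-k}$ is an ascending union. In general $N$ is not finitely generated, so Lemma \ref{S1} does not apply directly. Here the $FP_\infty$ structure should help. I would first pass, via Theorem \ref{Bi-Str} and \cite[Thm. 5.2]{B-S}, to a finite index subgroup $M$ that is nilpotent-by-abelian, with $1 \to L \to M \to Q \to 1$, $L$ nilpotent and $Q$ finitely generated abelian. The fibre $L$ is not finitely generated either, so instead I would invoke the results of Groves--Hermiller \cite{Gr-He} and Lorensen \cite{L}. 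Lorensen shows that a group is highly residually finite when it is an ascending HNN extension over a highly residually finite base, under suitable finiteness conditions. Groves--Hermiller supply the splitting of soluble $FP_\infty$ groups, or of constructible groups, as iterated ascending HNN extensions whose bases keep these conditions.

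\textbf{HNN step: direct alternative.} Theorem \ref{surprise} c) reduces the problem to checking that $H^i(\widehat{U}, \mathbb{F}_p) \to H^i(U, \mathbb{F}_p)$ is an isomorphism for $i \le 2$, for $U$ ranging over a cofinal family of finite index normal subgroups. Such a $U$ is again an ascending HNN extension over a finite index subgroup of $B$ conjugated appropriately. Mayer--Vietoris for HNN extensions,
\[
\cdots \to H^{i-1}(B) \to H^i(U) \to H^i(B) \xrightarrow{1-\phi^*} H^i(B) \to \cdots,
\]
together with its profinite analogue for $\widehat{U}$, and the five lemma, would give the isomorphism. For this one needs the profinite topology of $U$ to be efficient on $B$, that is, $\widehat{U}$ should be the profinite HNN extension of $\widehat{B}$ with $\widehat{B}$ embedded. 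This is the analogue of Proposition \ref{good-short} for ascending HNN extensions.

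\textbf{Main obstacle.} The hardest step is efficiency: showing that $B$ is separable in $G$ and that $G$ induces the full profinite topology on $B$. This fails for general ascending HNN extensions, for example for some non-residually-finite Baumslag--Solitar-type examples, but it holds in the linear, $FP_\infty$ soluble setting. I would extract it from Lorensen's criterion, which is essentially that $\phi$ induces an automorphism of $\widehat{B}$, checked using the linearity of $G$ and the finite generation of the relevant quotients.
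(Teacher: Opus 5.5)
There is a genuine gap: the ascending HNN step of your induction is never established, and the mechanism you propose for it fails.

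Inducting along the constructible hierarchy obliges you to show that an ascending HNN extension $B*_\phi$ of an \emph{arbitrary} constructible base $B\in\mathcal{A}_2$ lies in $\mathcal{A}_2$. The available result of Lorensen, \cite[Cor. 4.17]{L}, only treats ascending HNN extensions of virtually polycyclic groups, so your first approach does not cover this step.

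Your direct alternative needs efficiency, which you assert holds for linear soluble $FP_\infty$ groups. It already fails for $G=BS(1,m)=\langle a,t\mid tat^{-1}=a^m\rangle$ with $m\geq 2$, which is metabelian and $FP_\infty$:
\begin{itemize}
\item In every finite quotient $a$ and $a^m$ are conjugate, so the image of $a$ has order prime to $m$.
\item Hence $G$ induces on $B=\langle a\rangle$ only its pro-$m'$ topology, and the closure of $B$ in $\widehat G$ is $\prod_{p\nmid m}\Z_p$ rather than $\widehat{\Z}$. This matches the description $\widehat G=(\prod_{p\nmid m}\Z_p)\rtimes\widehat{\Z}$ used later in Section \ref{constr}.
\item In particular $\widehat B$ does not embed in $\widehat G$, and $\phi$ (multiplication by $m$) is not an automorphism of $\widehat B$.
\end{itemize}
Replacing $\widehat B$ by the closure $\overline B$ does not rescue the five-lemma argument either. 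For $p\mid m$ one has $H^1(\overline B,\mathbb{F}_p)=0$ while $H^1(B,\mathbb{F}_p)=\mathbb{F}_p$. So the comparison maps on the base terms of the two Mayer--Vietoris sequences are not isomorphisms, even though the terms for $G$ do agree.

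The missing idea is to avoid a general HNN lemma. Arrange instead for the HNN extension to be a \emph{finitely generated normal} subgroup with polycyclic base, so that what remains is an extension with finitely generated kernel. The paper does this as follows.
\begin{itemize}
\item From \cite[Prop. 3.1]{Gr-He} and its proof it takes a chain $M\leq K\leq G_0\leq G$ with $[G:G_0]<\infty$, $G_0/M$ torsion-free abelian, $K/M\cong\Z$, $M$ nilpotent, and $K$ an ascending HNN extension of a finitely generated nilpotent group $M_0\leq M$. Since $K/M\cong\Z$ sits in the abelian group $G_0/M$, $K\trianglelefteq G_0$ and $G_0/K$ is finitely generated abelian.
\item By \cite[Cor. 4.17]{L}, $K$ is highly residually finite.
\item Unlike your normal closure $N$, $K$ is finitely generated. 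So \cite[Cor. 2.11]{L}, or equivalently Lemma \ref{S1} together with Proposition \ref{Lor1} applied to $1\to K\to G_0\to G_0/K\to 1$, shows $G_0$ is highly residually finite.
\item Lemma \ref{Serre2} a), and the fact that residual finiteness passes to finite-index overgroups, finish the proof for $G$.
\end{itemize}
Your derivation of residual finiteness from \cite{Krop}, \cite{Strebel} and \cite{Mal} is correct, but it is not needed separately once this route is taken.
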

  
  \begin{proof}
  By \cite[Prop. 3.1]{Gr-He}  there is a 
  sequence of subgroups $M \leq K \leq G_0 \leq G$ such that $G_0$ has finite index in $G$, $G_0/ M$ is torsion-free abelian, $K/ M \simeq \Z$, $M$ is nilpotent torsion-free and $K$ is constructible. Furthermore the proof of \cite[Prop. 3.1]{Gr-He}, specially the fact proved at the end of the proof  (about a specific version of the complement of a Bieri-Strebel invariant) that $\Sigma^c_{M_1 = M^{ab}}$ is a unique point ( actually there is another possibility an empty set) implies that $K$ is an ascending HNN extension with a finitely generated nilpotent base group $M_0$ (the empty set case corresponds to $M_0 = M$ is finitely generated), where $M_0 \leq M$.
  
  By \cite[Cor. 4.17]{L}  an ascending HNN extension of a virtually polycyclic group is highly residually finite, in particular  since  $M_0$ is finitely generated nilpotent, hence polycyclic, we can deduce that  $K$ is highly residually finite. Furthermore by \cite[Cor. 2.11]{L} since $K$ is finitely generated  and highly residually finite, any extension of $K$ by a highly residually finite groups is also highly residually finite. In particular $G_0$ is highly residually finite. Since being highly residually finite is invariant under going up and down with finite index, we conclude that $G$ is highly residually finite.
  \end{proof} 
  
  Note that the above result gives another proof of the fact that if $G$ is a soluble group of type $FP_{\infty}$ then $G \wedge G$ is residually finite, since by Proposition \ref{Highly-res-fin} $G \in \mathcal{A}_2$ and we can apply Corollary \ref{equi2}.

  \medskip
  The classical Baumslag-Solitar groups  are given by the presentation $$BS(n,m) = \langle a,t \ | \ t a^n t^{-1} = a^m  \rangle$$ This definition
was recently generalised by Button \cite{But}, who considered the class of fundamental groups of finite graphs of groups
with vertex and edge groups isomorphic to $\mathbb{Z}^n$ and he calls such groups generalised Baumslag–Solitar groups of rank $n \geq 1$.

 \begin{cor} Let $G$ be a generalised Baumslag-Solitar group of rank $n \geq 1$ that is residually finite. Then   $G \in {\mathcal{A}_2}$ and $G \wedge G$ is residually finite.  In particular if $n = 1$ then $G \in \mathcal{A}_s$ for every $s \geq 2$. \end{cor}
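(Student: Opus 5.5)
We split according to the classification of residually finite generalised Baumslag-Solitar groups recalled in the introduction. By \cite{L-Sh} (and \cite{Lev2} for $n=1$), a residually finite generalised Baumslag-Solitar group $G$ of rank $n$ is either (i) an HNN extension of $\mathbb{Z}^n$, or (ii) virtually $\mathbb{Z}^n$-by-(finitely generated free). In every case $G$ is the fundamental group of a finite graph of groups with vertex and edge groups $\mathbb{Z}^n$. By \cite{Brown-cr}, $G$ is therefore of type $FP_{\infty}$, and in particular $H_2(G,\Z)$ is finitely generated. Once we show $G \in \mathcal{A}_2$, residual finiteness of $G \wedge G$ follows from Corollary \ref{equi2}, since $G$ is residually finite by hypothesis.

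In case (i), residual finiteness of an HNN extension of $\mathbb{Z}^n$ forces it to be ascending: otherwise it contains a non-abelian free subgroup, and the standard argument of \cite{L-Sh} rules this out. So $G$ is an ascending HNN extension of $\mathbb{Z}^n$, which is metabelian and constructible, hence soluble of type $FP_{\infty}$. Proposition \ref{Highly-res-fin} then gives $G \in \mathcal{A}_2$. Alternatively, one can cite \cite[Cor. 4.17]{L} directly, since $\mathbb{Z}^n$ is polycyclic.

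In case (ii), pass to a finite index subgroup $G_0$ with a short exact sequence $1 \to \mathbb{Z}^n \to G_0 \to F \to 1$, where $F$ is finitely generated free. Both $\mathbb{Z}^n$ and $F$ are good and of type $FP_{\infty}$, so $H^q(\mathbb{Z}^n,M)$ is finite for every finite module $M$. Lemma \ref{Serre2} b) then gives $G_0$ good, and in particular $G_0 \in \mathcal{A}_2$; Lemma \ref{Serre2} a) lifts this to $G$. The main point to check here is that $\mathbb{Z}^n$-by-free is indeed virtually this kind of extension with finitely generated free quotient. This holds because $G$ is finitely generated and a finite index subgroup of a finitely generated group is finitely generated, so the free quotient is finitely generated.

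For the final claim with $n=1$, we invoke the Cohen--Wykowski classification \cite{C-Wy}, which states that residually finite generalised Baumslag-Solitar groups of rank $1$ are good. Thus $G \in \mathcal{A}_s$ for all $s$. The main obstacle is really just the correct invocation of the classification in \cite{L-Sh}; everything else reduces to results already stated.
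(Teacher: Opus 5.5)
Your argument for the main claim is essentially the paper's: the dichotomy from \cite{L-Sh}, Lemma \ref{Serre2} for the virtually $\mathbb{Z}^n$-by-(finitely generated free) case, Proposition \ref{Highly-res-fin} (or \cite[Cor. 4.17]{L} directly) for the ascending case, and Corollary \ref{equi2} at the end.

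One justification is wrong, however. Residual finiteness does \emph{not} force an HNN extension of $\mathbb{Z}^n$ to be ascending, and containing a non-abelian free subgroup is no obstruction to residual finiteness. For example, $BS(2,2)=\langle a,t \mid ta^2t^{-1}=a^2\rangle$ is a non-ascending HNN extension of $\mathbb{Z}$. Yet $a^2$ is central with quotient $\mathbb{Z}/2\mathbb{Z} * \mathbb{Z}$, so $BS(2,2)$ is virtually $\mathbb{Z}\times F_2$ and hence residually finite. What \cite{L-Sh} actually provides is the dichotomy ``ascending HNN extension of $\mathbb{Z}^n$, or virtually $\mathbb{Z}^n$-by-(finitely generated free)''. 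Residually finite non-ascending HNN extensions land in your case (ii). State the classification in that form and delete the free-subgroup argument; nothing else in your proof changes.

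For the $n=1$ clause you take a genuinely different route, citing \cite{C-Wy}. The introduction does record that their classification makes residually finite rank-one generalised Baumslag--Solitar groups good. The paper instead deduces the clause from the $\mathcal{A}_2$ statement just proved:
\begin{itemize}
\item A rank-one generalised Baumslag--Solitar group has $cd(G)\le 2$, so $H^s(U,-)=0$ for $s\ge 3$ and every finite-index $U$.
\item The virtually $\mathbb{Z}$-by-free case is already good by the first part.
\item In the remaining case $G=BS(1,m)$, one has $\widehat{G}\simeq(\prod_{p\nmid m}\mathbb{Z}_p)\rtimes\widehat{\mathbb{Z}}$, so $cd_{p_0}(\widehat{U})\le 2$ for every prime $p_0$.
\item Hence both sides vanish above degree $2$, and criterion c) of Theorem \ref{surprise} upgrades $G\in\mathcal{A}_2$ to $G\in\mathcal{A}_s$ for all $s$.
\end{itemize}
Your citation is valid and shorter, but it turns the ``in particular'' into an appeal to a long external classification. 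The paper's argument is self-contained given the first part, and it shows how Theorem A reduces the question to trivial coefficients $\mathbb{F}_p$ plus a dimension bound.
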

 
 \begin{proof} By \cite{L-Sh} a generalised Baumslag-Solitar group of rank $n \geq 1$  that is residually finite is either virtually $\mathbb{Z}^n$-by-(finitely generated free) or an ascending HNN extension of $\mathbb{Z}^n$.
 In the former case we note that finitely generated free groups and $\mathbb{Z}^n$  are (cohomologically) good, hence by Lemma \ref{Serre2} b)  a group that is  $\mathbb{Z}^n$-by-(finitely generated free) is (cohomologically) good. Then by Lemma \ref{Serre2} a)  any finite extension of (cohomologically) good group is (cohomologically) good. Note that in this case we do not expect $\X(G)$ to be of type $FP_{\infty}$ as by \cite{B-K} for a non-cyclic finitely generated free group $F$ we have that $\X(F)$ is not $FP_3$. Since $G$ is (cohomologically) good and finitely presented,   by  Corollary \ref{equi2}  the map $H_2(G, \Z) \to H_2( \widehat{G}, \widehat{\Z})$ is injective   and   $G \wedge G$ is residually finite.
 
In the case  $G$ is an ascending HNN extension of $\mathbb{Z}^n$, $G$   is a constructible group and we can apply Theorem  \ref{sol}  to deduce that $G \in \mathcal{A}_2$.

Finally if $n = 1$ then $G$ has cohomological dimension $cd(G) \leq 2$, hence for every  subgroup $U$ of finite index in $G$ we have $cd(U) \leq cd(G) \leq 2$, so $H_s(U, - ) = 0 = H^s(U, -)$ for $s \geq 3$. It remains to consider only the case when the group is an ascending HNN extension over a vertex group i.e.  it is a classical Baumslag-Solitar metabelian  group $BS(1,m)$. Then its profinite completion is $\widehat{G} = (\prod_{p \nmid m} \Z_p) \rtimes \widehat{\Z}$ contains $ \widehat{U}$ as a subgroup of finite index  and  $H_s( \widehat{U}, - ) = 0 = H^s( \widehat{U}, -)$ for $s \geq 3$, because $\prod_{p \nmid m} \Z_p$ and  $\widehat{\Z}$ are  projective profinite groups, hence  the cohomological $p_0$-dimension $cd_{p_0} ( \widehat{U})  \leq cd_{p_0} ( \widehat{G}) \leq 1 + 1 = 2$ for any prime $p_0$. Hence by Theorem \ref{surprise}  $G \in \mathcal{A}_s$ for $s \geq 3$. 
  \end{proof}

\section{The non-abelian $q$-exterior square $G \wedge^q G$} \label{q-section}

 In the following result we state the classes of groups $G$ we studied in the previous section  for which $G \in \mathcal{A}_2$ and $G$ is residually finite. The fact that a RAAG is in $\mathcal{A}_2$ was proved in \cite{L}. The group $SL_2(\mathbb{Z})$ is virtually free of finite rank, hence is in the class $\mathcal{A}_2$.  By Corollary \ref{CorNew} we conclude that  $G \wedge^q G$ is residually finite.
 
\begin{cor} \label{q-square-res-finite} Let $G$ be a group  of  one of the following types:

a)  even Artin groups of FC type,

b)  even Artin group whose  underlying graph does not contain 4-clique,

c) a LERF Artin group,

 d) a graph product of residually finite, (cohomologically) good groups,
 
 e)  a right angled Artin group (RAAG),
 
 f) a limit group, 
 
 g) a soluble group of type $FP_{\infty}$,

 h) a generalised Baumslag-Solitar group of rank $n \geq 1$ that is residually finite,
 
 i) $SL_2(\mathbb{Z})$,
 
 j) a Coxeter group.

Then $G \in \mathcal{A}_2$. In particular,  $G \wedge^q G$ is residually finite for $q \geq 2$.
\end{cor}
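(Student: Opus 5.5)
The plan is to treat the statement as a collection of cases. In each case we verify two things: that $G$ is residually finite, and that $G \in \mathcal{A}_2$. Corollary \ref{CorNew} then gives at once that $G \wedge^q G$ is residually finite for every $q \geq 2$. Since (cohomological) goodness means $G \in \mathcal{A}_n$ for all $n$, it implies $G \in \mathcal{A}_2$. So in most cases it suffices to quote the goodness and residual finiteness results already established in Section \ref{SectAr} and Section \ref{constr}.

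The cases go as follows.
\begin{itemize}
\item For a), the even Artin groups of FC type: goodness is the proposition proved above via the efficient amalgam decomposition and Proposition \ref{good-short}. Residual finiteness is \cite{BG-MP-L}.
\item For b): both goodness and residual finiteness are \cite[Thm. 1.5]{J-S}.
\item For c), the LERF Artin groups: the proposition proved above gives both properties, by induction over the class $A(S)$ using Lemma \ref{dihedral}.
\item For d), graph products of residually finite good groups: residual finiteness is \cite{Green}. Membership in $\mathcal{A}_n$ for all $n$ is the graph product proposition proved above, applied with each $G_v \in \mathcal{A}_n$.
\item For e), RAAGs: these are a special case of d), with vertex groups $\Z$. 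Alternatively one can cite \cite{L}, or combine the mod $p$ result of \cite{L2} with Theorem \ref{surprise}.
\item For f), limit groups: goodness is \cite{G-J-Z}, and residual finiteness follows from residual freeness.
\item For g), soluble groups of type $FP_{\infty}$: this is exactly Proposition \ref{Highly-res-fin}.
\item For h), residually finite generalised Baumslag--Solitar groups: this is the corollary at the end of Section \ref{constr}. Its proof splits into the virtually $\Z^n$-by-free case, handled by Lemma \ref{Serre2}, and the ascending HNN case, handled by the constructible soluble case g).
\item For i), $SL_2(\Z)$: it is virtually free of finite rank. Free groups of finite rank are good, so Lemma \ref{Serre2} a) puts $SL_2(\Z)$ in $\mathcal{A}_2$. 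It is linear, hence residually finite.
\item For j), Coxeter groups: goodness is \cite{H-M-V}, and residual finiteness comes from Tits' linearity.
\end{itemize}

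There is no genuine obstacle here, since every case is a citation of an earlier result. The one point requiring slight care is case h) in the ascending HNN extension subcase. The earlier corollary cites Theorem \ref{sol}, but $\mathcal{A}_2$ is really supplied by Proposition \ref{Highly-res-fin}. So here we should note that an ascending HNN extension of $\Z^n$ is soluble and constructible, hence of type $FP_{\infty}$, and then apply Proposition \ref{Highly-res-fin} directly.

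With $G$ residually finite and $G \in \mathcal{A}_2$ in every case, Corollary \ref{CorNew} finishes the proof. That corollary rests on Proposition \ref{iff25} together with the equality $\mathcal{A}_2 = \mathcal{A}_2^{hom}$ from Theorem \ref{surprise}.
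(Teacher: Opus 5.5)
Your proposal is correct and follows the paper's route: check residual finiteness and membership in $\mathcal{A}_2$ case by case from the earlier results, then apply Corollary \ref{CorNew}. Your remark that in case h) the $\mathcal{A}_2$ property really comes from Proposition \ref{Highly-res-fin} rather than Theorem \ref{sol} is accurate.

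The one slip is your second alternative for e). The result of \cite{L2} gives isomorphisms only for the pro-$p$ completion of $G$ itself, whereas condition c) of Theorem \ref{surprise} needs isomorphisms for the profinite completions of all $U \in \mathcal{T}$, so that combination does not yield $\mathcal{A}_2$. Treating RAAGs as graph products of copies of $\Z$, or citing \cite{L} as the paper does, is fine.
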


\begin{prop} \label{q-sln-prop}
Let  $q \geq 2$ be an integer and  $G = SL_n(\mathbb{Z})$ for $n \geq 3$. Then $G \wedge^q G$ is residually finite.
\end{prop}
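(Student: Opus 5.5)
The plan is to apply Proposition \ref{iff25} directly rather than Corollary \ref{CorNew}. Corollary \ref{CorNew} is not available, because $SL_n(\Z)\notin\mathcal{A}_2$ for $n\geq 3$. The group $G=SL_n(\Z)$ is finitely generated and residually finite by Malcev's theorem. So by Proposition \ref{iff25} it suffices to show that the map $H_2(G,\mathbb{F}_q)\to H_2(\widehat{G},\mathbb{F}_q)$ is injective. I will in fact show it is an isomorphism, by reducing to the Claim in the proof of Theorem \ref{SLn}.

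First I rewrite both sides via the Universal Coefficients Theorem. Since $G$ is perfect, $Tor_1^{\Z}(H_1(G,\Z),\mathbb{F}_q)=0$, so $H_2(G,\mathbb{F}_q)\simeq H_2(G,\Z)\otimes_{\Z}\mathbb{F}_q$. The same holds for each $G_s=SL_n(\Z/s\Z)$. Indeed $\Z/s\Z$ is semilocal, so $SL_n(\Z/s\Z)=E_n(\Z/s\Z)$, and $E_n$ of any ring is perfect for $n\geq 3$. Hence $H_2(G_s,\mathbb{F}_q)\simeq H_2(G_s,\Z)\otimes_{\Z}\mathbb{F}_q$. Since the UCT sequence is natural in the group, the map $H_2(G,\mathbb{F}_q)\to H_2(G_s,\mathbb{F}_q)$ induced by $G\to G_s$ is identified with $\mu_s\otimes \mathrm{id}_{\mathbb{F}_q}$.

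Next I use the Claim from the proof of Theorem \ref{SLn}: $\mu_s$ is an isomorphism whenever $4\mid s$. Hence each map $H_2(G,\mathbb{F}_q)\to H_2(G_s,\mathbb{F}_q)$ with $4\mid s$ is an isomorphism. By the Bass--Lazard--Serre theorem, $\widehat{G}\simeq\varprojlim_s G_s$, and the indices $s$ divisible by $4$ form a cofinal subsystem. Profinite homology with finite coefficients commutes with inverse limits of finite groups, so
$$H_2(\widehat{G},\mathbb{F}_q)\simeq\varprojlim_{4\mid s}H_2(G_s,\mathbb{F}_q).$$
Under this identification the canonical map is the limit of the isomorphisms above. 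Therefore $H_2(G,\mathbb{F}_q)\to H_2(\widehat{G},\mathbb{F}_q)$ is an isomorphism, in particular injective, and Proposition \ref{iff25} gives that $G\wedge^q G$ is residually finite.

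I expect the main obstacle to be the compatibility bookkeeping rather than any new input. Three points need care:
\begin{itemize}
\item the UCT identification must be natural with respect to the projections $G\to G_s$ and the transition maps $G_s\to G_{s'}$;
\item the transition maps in the inverse system must themselves be isomorphisms, which follows since both $\mu_s$ and $\mu_{s'}$ are isomorphisms and they commute with the transition maps;
\item the profinite $H_2(\widehat{G},\mathbb{F}_q)$ must be computed as the inverse limit of the $H_2(G_s,\mathbb{F}_q)$.
\end{itemize}
The last point is standard for profinite homology with finite coefficients, which is the same limit argument already used in (\ref{eq222}).
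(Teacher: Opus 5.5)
Your proof is correct and follows the paper's own argument: the Claim from the proof of Theorem \ref{SLn}, the universal coefficients identification $H_2(-,\mathbb{F}_q)\simeq H_2(-,\Z)\otimes\mathbb{F}_q$ for the perfect groups $G$ and $G_s$, and the passage to $\varprojlim_s H_2(G_s,\mathbb{F}_q)\simeq H_2(\widehat{G},\mathbb{F}_q)$. Concluding via Proposition \ref{iff25} rather than Corollary \ref{CorNew} is the right choice, since the paper's closing appeal to Corollary \ref{CorNew} is a slip ($SL_n(\Z)\notin\mathcal{A}_2$ for $n\geq 3$); your restriction to the cofinal system $4\mid s$ also makes explicit a point the paper leaves implicit.
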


\begin{proof}
By the Claim from the proof of Theorem \ref{SLn} the map
$H_2(G, \mathbb{Z}) \to H_2(G_s, \mathbb{Z}),$
induced by the canonical epimorphism $G \to {G}_s$, is an isomorphism.
Since $G$ and $G_s$ are perfect groups and using the universal coefficients theorem we get that
$$H_2(G, \mathbb{F}_q) \simeq H_2(G,\mathbb{Z}) \otimes  \mathbb{F}_q \hbox{ and } H_2(G_s, \mathbb{F}_q) \simeq H_2(G_s,\mathbb{Z}) \otimes  \mathbb{F}_q $$
Hence the map 
$H_2(G, \mathbb{F}_q) \to H_2(G_s, \mathbb{F}_q) \hbox{ is an isomorphism.}$
Then there is an isomorphism  $H_2( \widehat{G}, \mathbb{F}_q)$ $ \simeq \varprojlim_s  H_2(G_s, \mathbb{F}_q)$ and we conclude that the map 
$$H_2(G, \mathbb{F}_q) \to H_2(\widehat{G}, \mathbb{F}_q) \hbox{ is an isomorphism.}$$
Then by Corollary \ref{CorNew} $G \wedge^q G$ is residually finite.
\end{proof}

Remark: By Lemma \ref{Wh-Wh} if
 $G$ is a $q$-perfect group, i.e. $G = [G,G] G^q$,  then $G \otimes^q G \simeq G \wedge^q G$. In particular  for $G = SL_n(\mathbb{Z})$ we have that $G \otimes^q G \simeq G \wedge^q G$ is residually finite.

  \section{Coherence} \label{coherence}
 
  Let $W (G)$ be the kernel of the homomorphism $$\rho : \X(G) \to G \times G \times G$$ 
where   $g$ goes to $(g,g,1)$ and  $g^{\varphi}$ goes to $(1,g,g)$ for all $g \in G$.
 Let $W_0(G)$ be the kernel of the map $$\rho_0 : \nu(G) \to G \times G \times G$$ 
 where   $g$ goes to $(g,g,1)$ and  $g^{\varphi}$ goes to $(1,g,g)$ for all $g \in G$.
 Then $$Im (\rho_0) = \langle ( g_1, g_2, g_3) \ | g_1 g_2^{-1} g_3 \in G' \rangle  = Im (\rho)  $$
 Recall that by   \cite[Lemma 2.2]{Norai2} $$W(G)/ R(G) \simeq H_2(G, \mathbb{Z})$$
 where  $R(G) = [G, L(G), G^{\varphi}]$.
By \cite[p.68-69]{Norai} \begin{equation} \label{isoiso}  \nu(G)/ \Delta(G) \simeq \mathcal{X} (G)/ R(G)\end{equation} with $W_0(G)/ \Delta(G) \simeq W(G)/ R(G)$, hence
$W_0(G)/ \Delta(G) \simeq H_2(G, \mathbb{Z}).$
By definition $$D(G) = [G, {G}^{\varphi}] \hbox{ and } L(G) = \langle {g}^{\varphi} g^{ -1} \ | \ g  \in G \rangle$$ are normal subgroups of $\X(G)$ and $W(G) = D(G) \cap L(G).$
Furthermore by \cite{Said}  $[D(G), L(G)] = 1$, hence $W(G)$ is abelian.

  By definition a group is coherent if every finitely generated subgroup is finitely presented. Obviously a finitely generated free group is coherent, since every subgroup of a free group is free.
    
  \begin{prop}
  Let $F$ be a finitely generated free group of rank $n \geq 2$. Then 
  
  a) $\nu(F)$ is not coherent.
  
  b) $\X(F)$ is not coherent.
  \end{prop}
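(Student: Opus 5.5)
We plan to prove non-coherence by exhibiting, inside $\X(F)$ and $\nu(F)$, a subgroup that is (up to a finitely generated central subgroup) a direct product $F_2\times F_2$ of two free groups of rank $2$. We then use the classical fact (Stallings, Bieri) that $F_2\times F_2$ is not coherent: the kernel $K$ of the map $F_2\times F_2\to\Z$ sending all four free generators to $1$ is finitely generated but not finitely presented.

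\emph{Step 1 (the case $\X(F)$).} Let $a,b$ be two members of a free basis of $F$, which exist since $n \geq 2$. Set $u=[a,b^{\varphi}]$, $v=[a^2,b^{\varphi}]$, which lie in $D(F)$, and $s=a^{\varphi}a^{-1}$, $t=b^{\varphi}b^{-1}$, which lie in $L(F)$. We compute their images under $\rho:\X(F)\to F\times F\times F$:
$$\rho(u)=(1,[a,b],1),\quad \rho(v)=(1,[a^2,b],1),\quad \rho(s)=(a^{-1},1,a),\quad \rho(t)=(b^{-1},1,b).$$
Since $[a,b]$ and $[a^2,b]$ do not commute in $F$, they generate a free group of rank $2$. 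The same holds for $\rho(s),\rho(t)$, since their first coordinates generate a free group of rank $2$. Hence $\langle u,v\rangle$ and $\langle s,t\rangle$ are free of rank $2$, because they surject onto free groups of rank $2$ and free groups are Hopfian.

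Since $[D(F),L(F)]=1$, the subgroup $H=\langle u,v,s,t\rangle$ is a quotient of $\langle u,v\rangle\times\langle s,t\rangle\simeq F_2\times F_2$. Moreover $\rho(H)=(1\times\langle[a,b],[a^2,b]\rangle\times 1)\cdot\langle\rho(s),\rho(t)\rangle$. The two factors meet trivially: an element of the second factor with trivial first coordinate has trivial third coordinate as well. So $\rho(H)\simeq F_2\times F_2$. The composite $F_2\times F_2\to H\to\rho(H)\simeq F_2\times F_2$ is a surjective endomorphism of a Hopfian group, hence an isomorphism, and so $H\simeq F_2\times F_2$. Its subgroup corresponding to $K$ is then a finitely generated, non-finitely presented subgroup of $\X(F)$, which proves b).

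\emph{Step 2 (the case $\nu(F)$).} Here we cannot directly cite $[D,L]=1$. Instead we pass through the isomorphism (\ref{isoiso}), $\nu(F)/\Delta(F)\simeq\X(F)/R(F)$, together with the fact that $\Delta(F)$ is central in $\nu(F)$ and finitely generated, being a quotient of $\Gamma(F^{ab})$. Since $H_2(F,\Z)=0$ we have $R(F)=W(F)=\ker\rho$. Hence $\rho$ factors through $\X(F)/R(F)$, and the computation of Step 1 shows that the image $\bar H$ of $H$ in $\X(F)/R(F)$ is still isomorphic to $F_2\times F_2$, since it maps isomorphically onto $\rho(H)$.

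Choose lifts $\tilde u,\tilde v,\tilde s,\tilde t\in\nu(F)$ and let $\tilde H$ be the subgroup they generate. Then $\tilde H/(\tilde H\cap\Delta(F))\simeq F_2\times F_2$, and $\tilde H\cap\Delta(F)$ is a finitely generated central subgroup, as a subgroup of the finitely generated abelian group $\Delta(F)$. Let $\tilde K$ be the full preimage of $K$ in $\tilde H$. It is finitely generated, being an extension of a finitely generated group by a finitely generated group. If $\tilde K$ were finitely presented, then $K=\tilde K/(\tilde H\cap\Delta(F))$ would be finitely presented, being a quotient of a finitely presented group by a finitely generated normal subgroup, which is a contradiction. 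This proves a).

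The main obstacle is the verification that the two subgroups $\langle u,v\rangle$ and $\langle s,t\rangle$ really split off as a direct product rather than a proper quotient. This is handled by the explicit image computation under $\rho$ (respectively $\rho_0$, via $\X(F)/R(F)$) combined with the Hopfian property of $F_2\times F_2$. A secondary point to check carefully is the claim $R(F)=W(F)$, which follows from $W(G)/R(G)\simeq H_2(G,\Z)=0$.
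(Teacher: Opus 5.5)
Your proof is correct, and it follows a genuinely different route from the paper's.

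The paper works with the subdirect product $S=\{(g_1,g_2,1)\mid g_1g_2^{-1}\in F'\}$ inside $Im(\rho_0)=Im(\rho)$.
\begin{itemize}
\item For $\nu(F)$, it notes that $W_0(F)=\Delta(F)$ (since $H_2(F,\Z)=0$) is finitely generated and central. It then quotes Baumslag--Roseblade to see that the finitely generated group $S$ is not finitely presented.
\item For $\X(F)$, $W(F)$ is not known to be finitely generated. The paper therefore needs the VSP theorem of Bridson--Howie--Miller--Short to get $Im(\rho)$ finitely presented, hence $W(F)$ finitely generated as a $\Z[F/F']$-module. Only then can it show that the preimage of $S$ is finitely generated but not finitely presented.
\end{itemize}

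You instead use $[D(F),L(F)]=1$ to build an honest copy $H\simeq F_2\times F_2$ inside $\X(F)$ with $H\cap W(F)=1$. This bypasses the finite generation question for $W(F)$ altogether and reduces everything to the classical Stallings--Bieri kernel in $F_2\times F_2$. For $\nu(F)$, the isomorphism $\nu(F)/\Delta(F)\simeq \X(F)/R(F)$ and the centrality of $\Delta(F)$ finish the job.

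Your route buys two things:
\begin{itemize}
\item It is more elementary: neither Baumslag--Roseblade nor VSP is needed.
\item It is more general. You only use that $a,b$ freely generate a free subgroup, and only $R(G)\le W(G)$ is needed to get $H\cap R(G)=1$. Finite generation of $\tilde H\cap\Delta(G)$ follows because it is central in the finitely generated group $\tilde H$ and the quotient $F_2\times F_2$ is finitely presented. So the same argument shows $\X(G)$ and $\nu(G)$ are not coherent for every group $G$ containing a non-abelian free subgroup.
\end{itemize}
The paper's route, in return, gives extra structural information, namely the finite generation of $W(F)$ as a $\Z[F/F']$-module. It also identifies the natural subgroup $S$ as the obstruction.

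One small point to tidy up: you should justify why $[a,b]$ and $[a^2,b]$ generate a free group of rank $2$. For instance, they are distinct members of the standard free basis $\{[a^i,b^j]\mid i,j\neq 0\}$ of $\langle a,b\rangle'$, which is the kernel of $\langle a\rangle*\langle b\rangle\to\langle a\rangle\times\langle b\rangle$.
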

  
  \begin{proof}
a)  Let $W_0 = W_0(F), \Delta = \Delta(F)$. We have $$W_0/ \Delta \simeq H_2(F, \mathbb{Z}) = 0$$ and $\Delta = \langle g \otimes g \ | \ g \in F \rangle$ is finitely generated since $F$ is finitely generated. Thus $W_0$ is a finitely generated central subgroup of $\nu(F)$, hence is finitely presented. Then a subgroup $H$ of $\nu(F)$  that contains $W_0$ is finitely presented if and only if $H/ W_0$ is finitely presented.
  Thus to prove that $\nu(F)$ is not coherent it suffices to show that   $Im (\rho_0)$  is not coherent.
  
  Consider the subgroup $$S = \{ ( g_1, g_2, 1) \ | \ g_1 g_2^{-1} \in F' \} \leq Im (\rho_0) $$ We view $S$ as a subgroup of $F \times F$ by forgeting the last coordinate that is 1. In \cite{B-R} Baumslag and Roseblade showed that a finitely presented subgroup of $F \times F$ is either a free group or virtually is a direct product of two free groups, which  does not hold for $S$, so $S$ is not finitely presented. It is easy to see that $S$ is finitely generated. One way to prove this is to note that $S \simeq F' \rtimes K$, where $K \simeq F$ with the $K$-action  on $F'$ via conjugation in $S$ equal to the  $F$-action on its commutator subgroup via conjugation in $F$. Then  a finite generating set of $F'$ as a normal subgroup of $F$ together with a finite generating set of $K$ gives a finite generating set of $S$.
  
  Alternatively by \cite[Thm A]{B-R}  if $S$ is of type $FP_2$  over $\mathbb{Q}$ ( i.e. the trivial $\mathbb{Q} S$-module $\mathbb{Q}$ has a projective resolution where each projective module in dimension $\leq 2$ is finitely generated) then
  $S$ should have a finite index in $F \times F$, a contradiction. 
  
  b)
   In the case $n \geq 3$ it is known (but not published) that  $W = W(F)$ is not finitely generated, so we cannot proceed as in case a). But we can consider the same group $S  \leq Im (\rho)$ and its preimage $H$ in  $\X(F)$.
Note that  $Im (\rho)$ is a subdirect product in $F \times F \times F$ that maps surjectively on pairs, hence by the  Virtual Surjection on Pairs (VSP) Theorem   of \cite{BHMS} $Im (\rho)$ is finitely presented. Thus $W$ is finitely generated as $\X(F)$-module (via conjugation) but the action of $LD$ is trivial,  where $L = L(F)$, $D = D(F)$, so $W$ is actually finitely generated $Q$-module for $$Q = F / F' \simeq \X(F)  / DL$$ Note that $S$ maps surjectively to $Q$,  this follows from $\rho(F) \subseteq S$. Hence $W$ is a finitely generated $H$-module via conjugation. By construction $H/ W \simeq S$ is finitely generated, hence $H$ is finitely generated.
   
   Finally if $H$ is finitely presented then since $W$ is a finitely generated $H$-module we conclude that $S$ is finitely presented, a contradiction.
  \end{proof}


\begin{thebibliography}{99}

 \bibitem{A-L}	K. Almeida, I. Lima, Subgroup separability of Artin groups,
J. Algebra 583 (2021), 25 - 37
	
	\bibitem{A-B-K} 
 A. Anjos Teixeira, L. Barroso Rocha, Dessislava H. Kochloukova,On the residual finiteness of the non-abelian tensor square $G \otimes G$, the non-abelian exterior square $G \wedge G$ and the weak commutativity construction $\X(G)$,  	arXiv:2609.34057

\bibitem{Artin} E. Artin, Theory of braids, Ann. Math. 48 (1947) 101 - 126
	
	
 \bibitem{B-L-S}	H. Bass, M. Lazard, J.-P. Serre, Sous-groupes d’indice fini dans SL(n, Z), Bull. Amer.
Math. Soc. 70 (1964), 385–392
	
	


\bibitem{B-M-S} Bass, H.; Milnor, J.; Serre, J.-P., Solution of the congruence subgroup problem for $SL_n(n \geq 3)$ and $Sp_{2n}(n \geq 2)$,
Inst. Hautes Études Sci. Publ. Math. No. 33 (1967), 59 - 137

 \bibitem{B-S}	R. Bieri, R. Strebel, A geometric invariant for nilpotent-by-abelian groups, J. Pure
Appl. Algebra 25 (1982), 1-20 

\bibitem{B-R} G. Baumslag, J. E. Roseblade. Subgroups of direct products of free groups. J. London Math. Soc. (2) 30 (1984), 44–52

	\bibitem{Bieribook}
	{ R. Bieri.}
	{ Homological Dimension of Discrete Groups.}
	2ª ed.. London: Queen Mary College Mathematics Notes (Queen Mary College), 1981 
	
	
	
 \bibitem{BG-MP-L} R. Blasco-García, C. Martínez-Pérez, L. Paris, Poly-freeness of even Artin groups of FC type,
Groups Geom. Dyn. 13 (2019), no. 1, 309 - 32 


\bibitem{B-E} J Boler, B Evans, The free product of residually finite groups amalgamated along
retracts is residually finite, Proc. Amer. Math. Soc. 37 (1973), 50–52

\bibitem{B-F-M} 
R. D. Blyth, F. Fumagalli, M. Morigi,
Some structural results on the non-abelian tensor square of
groups, J. Group Theory 13 (2010), 83 - 94

\bibitem{BHMS} M. R. Bridson, J. Howie, C. F. Miller III, H. Short, On the finite presentation of subdirect products and the nature of residually free groups, American Journal of Mathematics 135 (4) (2013), 891 - 933 

\bibitem{B-K} 
Martin R. Bridson, Dessislava H. Kochloukova,  Weak commutativity and finiteness properties of groups,
Bull. Lond. Math. Soc. 51 (2019), no. 1, 168–180

\bibitem{Brown-cr} K. S. Brown, Finiteness properties of groups,
J. Pure Appl. Algebra 44 (1987), no. 1 - 3, 45 - 75

\bibitem{B-L1}   R.  Brown, J.-L.  Loday,  Excision  homotopique  en  basse  dimension, C.R. Acad. Sci. Paris Sir.  I Math. 298 (1984), 353 - 356

\bibitem{B-L2}   R.  Brown, J.-L.  Loday,  Van  Kampen  theorems  for  diagrams  of  spaces, Topology 26 (1987), 311 - 335


\bibitem{B-J-R} R. Brown, D.L. Johnson, E.F. Robertson, Some computations of non-abelian tensor products of groups, J. Algebra, 111 (1987), 177-202

\bibitem{But} Jack Button, Generalised Baumslag-Solitar groups and hierarchically hyperbolic groups, arXiv:2208.12688, 2022

 \bibitem{C-Wy} W. Cohen, J. Wykowski, Cohomological separability of Baumslag–Solitar
groups and their generalisations, Journal of Algebra 688 (2026) 660 - 693 

\bibitem{C-R} D. Conduche,  C. Rodr\'iguez-Fern\'andez,  Non-abelian tensor and exterior products modulo q
and universal q-central relative extensions, J. Pure Appl. Algebra 78 (1992) 139-160

\bibitem{Co} H.S.M. Coxeter, Discrete groups generated by reflections, Ann.
Math, 35:588–621, 193

\bibitem{Deligne} Pierre Deligne, Extensions centrales non residuellement finies de groupes arithmetiques, C. R. Acad. Sci. Paris Ser. A-B, 287 (1978), 203 - 208

 \bibitem{D}   R.  K.  Dennis,  In  search  of  new  homology  functors  having a close  relationship  to  K-theory
preprint  (Cornell  Univ.,  1976)

\bibitem{Ellis}   G. Ellis, The  nonabelian  tensor  product  of  finite  groups  is finite,
J. Algebra  111 (1987),
203 - 205

\bibitem{Ellis2} G. J. Ellis, Tensor products and q-crossed modules, J. Lond. Math. Soc. (2) 51 (1995), no. 2, 243-258


\bibitem{E-RF} G. J. Ellis, C. Rodríguez-Fernández, 
An exterior product for the homology of groups with integral coefficients modulo p,
Cahiers Topologie Géom. Différentielle Catég. 30 (1989), no. 4, 339-343

\bibitem{Fu-Pi}L. Funar, W. Pitsch, The Schur multiplier of finite symplectic groups,
 Bull. Soc. Math. France 148 (2020), no. 3, 515-527

\bibitem{Go-Gu}  D.L. Goncalves, J. Guaschi, Journal of Pure and Applied Algebra 182 (2003) 33 – 64 


\bibitem{Green} E. Green, Graph products, Ph.D. thesis, Univ. of Warwick, 1991 


\bibitem{Gr-He}  J. R. J. Groves, S. M. Hermiller, Isoperimetric inequalities for soluble groups,
Geom. Dedicata 88 (2001), no. 1-3, 239–254

\bibitem{G-S}
F. Grunewald, J. Schwermer,   Subgroups of Bianchi groups and arithmetic quotients of hyperbolic 3-space
Trans. Amer. Math. Soc. 335 (1993), no. 1, 47–78

\bibitem{G-J-Z} F. Grunewald, A. Jaikin-Zapirain, P. A. Zalesskii, Cohomological goodness and the profinite
completion of Bianchi groups, Duke Math. J. 144 (2008), no. 1, 53 - 72

\bibitem{G-R-S} 
N. Gupta, N. Rocco, 
 S. Sidki, Diagonal embeddings of nilpotent groups, Illinois J. Math. 30 (1986), no. 2, 274 - 283 


 \bibitem{Hewitt} P. R. Hewitt,  Extensions of residually finite groups
J. Algebra 163 (1994), no. 3, 757–772 



\bibitem{Hu} James E. Humphreys, Introduction to Lie Algebras and Representation Theory, Springer-Verlag, New York, 1972

\bibitem{Kallen}
W. van der Kallen, The Schur multipliers of SL(3,Z) and SL(4,Z)
Math. Ann. 212 (1974/75), 47 - 49

{\label{red} \bibitem{Ka-Tu} Christian Kassel, Vladimir Turaev,  Braid Groups, Graduate Texts in Mathematics, vol. 247. Springer, 2008}

\bibitem{H-K} 
J. Hillman, D. Kochloukova, Pro-p completions of $PD_n$-groups
Israel J. Math. 265 (2025), no. 2, 801–831

 \bibitem{Hsu-Wise} T. Hsu, D. T.  Wise, On linear and residual properties of graph products,
Michigan Math. J. 46 (1999), no. 2, 251–259 

\bibitem{H-M-V} Sam Hughes, Philip M\"oller, Olga Varghese, Profinite properties of Coxeter groups, arXiv 2505.08701

\bibitem{J-S}  K. Jankiewicz, K. Schreve, Profinite properties of algebraically clean graphs of free groups,
J. Pure Appl. Algebra 229 (2025), no. 1, Paper No. 107775 

 \bibitem{K-S} 
D. H. Kochloukova, S. Sidki, 
On weak commutativity in groups,
J. Algebra 471 (2017), 319 - 347 

\bibitem{Le-Ro} J. C. Lennox, D. J. S. Robinson, The theory of infinite soluble groups,
Oxford Mathematical Monographs, Oxford University Press, Oxford
(2004)


\bibitem{Lev2} Levitt, Gilbert, Quotients and subgroups of Baumslag-Solitar groups,
J. Group Theory 18 (2015), no. 1, 1–43




\bibitem{Krop} P. Kropholler, On groups of type $FP_{\infty}$,
J. Pure Appl. Algebra 90 (1993), no. 1, 55–67 

 \bibitem{LimaOliveira} B. C. R. Lima,  R. N.   Oliveira, Weak commutativity between two isomorphic polycyclic groups, J. Group Theory 19 (2016), no. 2,  239 - 248 

\bibitem{L-Sh}  J. Lopez de Gamiz Zearra, S. Shepherd,  Separability properties of higher rank GBS groups,
Bull. Lond. Math. Soc. 57 (2025), no. 4, 1171 - 1194 

\bibitem{L} K. Lorensen, Groups with the same cohomology as their profinite completions. J. Algebra 320 (2008), no. 4, 1704–1722

\bibitem{L2} K. Lorensen, Groups with the same cohomology as their pro-p completions,
J. Pure Appl. Algebra 214 (2010), no. 1, 6–14.

 \bibitem{Mal} A.I. Malcev, On isomorphic matrix representations of infinite groups of matrices , Mat.
Sb. 8 (1940), 405–422, Amer. Math. Soc. Transl. (2) 45 (1965), 1 - 18 

\bibitem{Milnorbook} J. Milnor, Introduction to Algebraic K-theory, Annals of Math. Study 72,
Princeton Univ. Press, 1971

\bibitem{Plat} V. Platonov, A certain problem for finitely generated groups,
Dokl. Akad. Nauk BSSR 12 (1968), 492–494

\bibitem{Put} 
A. Putman, The Picard group of the moduli space of curves with level structures, Duke
Math. J. 161 (2012), 623–674

 \bibitem{Reid} A. Reid, Profinite properties of discrete groups,
London Math. Soc. Lecture Note Ser., 422
Cambridge University Press, Cambridge, 2015, 73–104

\bibitem{Norai2} 
 N. R. Rocco, On weak commutativity between finite p-groups, p odd. J. Algebra 76 (1982), no. 2, 471 - 488

\bibitem{Norai}  N. R. Rocco, On a construction related to the nonabelian tensor square of a group, Bol. Soc. Brasil. Mat. (N.S.) 22 (1991), no. 1, 63 - 79


\bibitem{Re} 
T. Reese,  Presentations for singly-cusped Bianchi groups,
Topology Appl. 328 (2023), Paper No. 108443, 20 pp.

\bibitem{R-Z} L. Ribes, P. Zalesskii,  Profinite groups,
Ergeb. Math. Grenzgeb. (3), 40,
Springer-Verlag, Berlin, 2010

\bibitem{S} H. Schneebeli, Group extensions whose profinite completion is exact, Arch. Math.
31 (1978) 244–253

\bibitem{Serre}  J.-P. Serre, Galois cohomology,  Springer-Verlag, Berlin, 1997

 \bibitem{Said}  S. Sidki, On weak permutability between groups. J. Algebra 63 (1980), no. 1, 186  - 225 

\bibitem{Strebel} R. Strebel, Finitely presented soluble groups, Group Theory, essays for
Philip Hall (ed. K. W. Gruenberg and J. E. Roseblade), Academic Press
1984


\bibitem{Swan}  R.G. Swan, Generators and Relations for certain Special Linear Groups, Advances in Math. 6 (1971), 1 – 77


 \bibitem{Kbook} Ch. A.  Weibel, The K-book,
Grad. Stud. Math., 145,
American Mathematical Society, Providence, RI, 2013

\bibitem{Wil} G. Wilkes, Profinite Properties of 3-Manifold Groups, PhD Thesis,  Oxford 2018 

 \bibitem{W-Y} X. Wu, S. Ye, Splittings and poly-freeness of triangle Artin groups, Trans. Amer. Math. Soc. 378 (2025), no. 7, 4707 - 4737 

\end{thebibliography}
\end{document}
\section{$G \otimes G$ and $G \wedge G$ for $G =SL_n(\mathbb{Z})$} \label{SLnSection}

\medskip
  In this section we use results from algebraic $K$-theory  to study the residual finiteness of $G \wedge G$ and $G \otimes G$ for $G = SL_n(\mathbb{Z})$ for $n \geq 5$. Our main reference is the book \cite{Kbook}.
For the definitions of the Steinberg group $St(R)$, $ST_n(R)$ and the relevant groups $E(R)$ and $E_n(R)$ we refere the reader to \cite{Kbook}.

Let $R$ be a ring. By definition, see \cite{Kbook},  there is an exact sequence of groups
 $$
 1 \to K_2(R) \to St(R) \to GL(R) \to K_1(R) \to 1$$  
The image of the map $St(R) \to GL(R)$ is precisely  $E(R)$.

\begin{theorem} (Kervaire, Steinberg) \cite[Thm. 5.5]{Kbook} The Steinberg group $St(R)$ is the universal
central extension of $E(R)$. Hence
$K_2(R) \simeq H_2(E(R); \Z)$.
\end{theorem}

By \cite[Remark 5.5.2]{Kbook} (Stability for $K_2$) for $ K_2(n,R) = Ker(St_n(R) \to E_n(R)) $ and the natural maps $K_2(n,R) \to K_2(R)$ if  $R$ is noetherian
of dimension $d$ then the following stability
result holds: $K_2(n,R) \simeq K_2(R)$ for all $n \geq d + 3$.

  \begin{prop} \cite[Prop. 5.5.1]{Kbook} \label{K2} Let $R$ be a ring and  $n \geq 5$ be an integer. Then
the Steinberg group $ St_n(R)$ is the universal central extension of $E_n(R)$. In particular
$K_2(n, R) = Ker (ST_n(\Z) \to SL_n(\Z)) \simeq H_2(SL_n(\Z), \Z)$.
  \end{prop}
 
\begin{prop} \cite[Exer. 5.13]{Kbook}  \label{K4} If p is an odd prime, by  \cite[Thm
III.5.11.1]{Kbook}
$ K_2(\Z/p^n) =  0$ ( in additive notation) and if
$n \geq 2$ we have $ K_2(\Z/2^n) $ is a cyclic group of order 2.
  \end{prop}
  
  \begin{theorem} \cite[Ch. 10]{Milnorbook}
  The group $K_2(\Z)$ is cyclic of order 2.
  \end{theorem}

As a corollary of the above results and using the remark about the stability of $K_2$ we have

\begin{cor} \label{K3}  For $n \geq 5$ 
$$H_2(SL_n(\Z), \Z) \simeq K_2(\Z,n) \simeq K_2(\Z) \simeq \Z / 2 \Z \hbox{ the cyclic group of order }2.$$
\end{cor}

The following is a famous result of Bass-Lazard-Serre.

\begin{theorem} \cite{B-L-S} For $n \geq 3$ every finite index subgroup $H$ of $SL_n(\Z)$ is a congruence subgroup i.e. there is some $s \geq 2$ such that $Ker (SL_n( \Z) \to SL_n( \Z / s \Z)) \leq H$. Hence $ \widehat{SL_n(\Z)} \simeq SL_n( \widehat{\Z}) \simeq \varprojlim_s SL_n(\Z/ s \Z)$. 
\end{theorem}

By the above result  for $n \geq 3$ the finite group quotients of $SL_n(\Z)$ have a very specific form. Note this does not hold for $SL_2(\Z)$, since it is virtually free.

\begin{theorem} \label{SLn}
Let $G = SL_n( \Z)$ for $n \geq 3$. Then   the map $H_2(G, \Z) \to H_2( \widehat{G}, \widehat{\Z}
)$ , induced by the caninical map $G \to \widehat{G}$, is an isomorphism. In particular $G \otimes G$ and $G \wedge G$ are residually finite.
\end{theorem}

\begin{proof}
Let $m_s$ be an integer divisible by the order of $G_s = SL_n( \Z / s \Z)$ where $s$ is divisible by 4. By the Universal Coefficients Theorem
 there is a short exact sequence
 $$ 0 \to H_2 (G_s , \Z) \otimes (\z / m_s \Z) \to H_2(G_s, \Z/ m_s \Z) \to Tor_1^{\Z}(H_1(G_s, \Z), \Z / m_s \Z) \to 0$$  
Since $m_s H_2(G_s, \Z) = 0$ and $H_1(G_s, \Z) \simeq G_s^{ab} = 0$ we get  $  H_2(G_s, \Z/ m_s \Z) \simeq H_2(G_s, \Z)$. By \cite{Kallen} we have
$$  H_2(G_s, \Z/ m_s \Z) \simeq H_2(G_s, \Z) \simeq (\Z/ 2 \Z) \times (\Z / 2 \Z) \simeq H_2(G, \Z) \hbox{ for } n = 3 \hbox{ and } n = 4$$
Furthermore
$$  H_2(G_s, \Z/ m_s \Z) \simeq H_2(G_s, \Z) \simeq K_2(\Z/ s \Z, n) \simeq K_2(\Z/ s \Z)  \hbox{ for } n \geq 5$$
where the second equality follows from  Proposition \ref{K2}  and the last equality is the stability of $K_2$. 

Suppose $s = 2^{z_1} \ldots p_i^{z_i}$, with $z_1 \geq 2$, is the decomposition as a product of prime integers, hence
$$\Z/ 2 \Z \simeq K_2(\Z/ 2^{z_1} \Z) \simeq K_2(\Z/ s \Z)\simeq H_2(G_s, \Z) \simeq H_2(G_s, \Z/ m_s \Z) $$
where the first equality is Lemma \ref{K4}, the second follows from the Chinese reminder theorem and Lemma \ref{K4}. Note that if $R_1, \ldots, R_i$ are rings ( with unity) then $K_2(R_1 \times \ldots \times R_i) \simeq K_2(R_1) \times \ldots \times K_2(R_i)$.

\medskip
{\bf Claim.} $ \mu_s : H_2(G, \Z) \to H_2(G_s, \Z)  \hbox{ is an  isomorphism for } n \geq 3.$

\medskip
Proof.  Since $|H_2(G, \Z)| = | H_2(G_s, \Z)|$ it suffices to show $\mu_s$ is surjective.
Consider the 5-term exact sequence in homology corresponding to the short exact sequence $ 1 \to N_s \to G \to G_s \to 1$ 
$$ H_2(G, \Z) \to_{\mu_s} H_2 (G_s, \Z)  \to N_s/ [G, N_s] \to G^{ab} \to G_s^{ab} \to 0$$ 
Note that  $G^{ab} = G/ G' $ is trivial.
Then $\mu_s$ is surjective is equivalent to 
 $N_s = [N_s, G]$. The last equality follows from   \cite[Thm. 4.1, Cor. 4.3]{B-M-S}  This completes the proof.
 
 \medskip
 
 Using the Claim and  the fact that map $ H_2(G_s, \Z) \to H_2(G_s, \Z/ m_s \Z)$ is an isomorphism  we deduce that  the map $ H_2(G, \Z) \to H_2(G_s, \Z/ m_s \Z)$ is an isomorphism, hence the map
$$\theta: H_2(G, \Z) \to \varprojlim_s H_2(G_s, \Z/ m_s \Z) \simeq  H_2( \widehat{G} , \widehat{\Z})$$ is an isomorphism
where we choose $m_s$ such that $\varprojlim_s \Z / m_s \Z = \widehat{\Z}$ and the order of $G_s = SL_n( \Z / s \Z)$ divides $m_s$.

 Note that by Malcev theorem \cite{Mal}, every finitely generated linear group (over a commutatove ring) is residually finite,   $G = SL_n(\Z)$ is residually finite.  Furthermore $G = G'$, thus $G^{ab}$ does not have 2-torsion. This together with $\theta$ injective implies that $G \wedge G$ and $G \otimes G$ are residually finite.
 
Finally since $\Delta(G) = Ker ( G \otimes G \to G \wedge G)$, $\Delta(G)$ is a quotient of Whitehead quadratic  functor $\Gamma(G^{ab})$ and $G^{ab} = G/ G'$ is the trivial group
 we conclude that $G \otimes G \simeq G \wedge G$.
 
 \end{proof}

\begin{prop} \label{LHSspectral} Let $G = B \rtimes A$, where $A \simeq \Z / 2 \Z$ and $B = B'$. Then the inclusion map $B \to G$ induces an isomorphism $H_0(A, H_2(B, \Z)) \simeq  H_2(G, \Z)$.
\end{prop}

\begin{proof}
Consider the LHS spectral sequence $$E^2_{p,q} = H_p(A, H_q (B, \Z))$$ converging to $H_{p+q} (G, \Z)$. Note that $$E_{2,0}^2 = H_2(A, \Z) = 0, \ E_{r,1}^2 = H_r(A, H_1(B, \Z)) = H_r(A, B^{ab}) = 0$$ 
The convergence of the spectral sequence implies $$H_2(G, \Z) \simeq E_{0,2}^{\infty}$$
Furthermore since $A$ is a retract  the map $H_m(G, \Z) \to H_m(A, \Z)$, induced by the projection $G \to A$,  splits and hence $H_m(A, \Z) \simeq E_{m,0}^2$ embeds in $H_m(G, \Z)$. This implies that $$E_{m,0}^2 = E_{m,0}^{\infty} \hbox{ for all } m \geq 1$$ In particular all differentials that start at $E_{m,0}^r$, $r \geq 2, m \geq 1$,  are the zero maps. Recall $E_{2,1}^2 = 0$.
 Comparing bidegrees of the differentials we obtain  that $E_{0,2}^2 = E_{0,2}^3 = E_{0,2}^4 = E_{0,2}^{\infty}$,  thus $$H_2(G, \Z) \simeq E_{0,2}^2 = H_0(A, H_2(B, \Z))$$
\end{proof}

\begin{theorem}
Let $G = GL_n( \Z)$ for  $n \geq 3$. Then the map $H_2(G, \Z) \to H_2( \widehat{G}, \widehat{\Z}
)$ , induced by the canonical map $G \to \widehat{G}$, is an isomorphism. In particular $G \wedge G$ is residually finite.  Furthermore $G \otimes G$ is residually finite.
\end{theorem}

\begin{proof}
Note that $G = B \rtimes A$, where $B = SL_n(\Z)$ and $A$ is a cyclic group of order 2 generated by the matrix $diag(-1,1, \ldots, 1)$. By Proposition \ref{LHSspectral} $$H_2(G, \Z) \simeq H_0(A, H_2(B, \Z)) $$

Since $A$ is finite, $\widehat{G} = \widehat{B} \rtimes A$. Applying a profinite version of the previous result ( i.e. using LHS spectral sequence for profinite homology)  we have $$H_2(\widehat{G}, \widehat{\Z}) \simeq H_0(A, H_2(\widehat{B}, \widehat{\Z})) $$ 
By  Theorem \ref{SLn} there is an isomorphism  $H_2(B, \Z) \simeq H_2( \widehat{B}, \widehat{\mathbb{Z}}) $ induced by the canonical map $B \to \widehat{B}$, hence the map 
$$H_2(G, \Z) \to H_2(\widehat{G}, \widehat{\Z})$$ is an isomorphism.
  Hence $G \wedge G$ is residually finite.
Note that $G^{ab} \simeq A$  and for $x = diag(-1,1, \ldots, 1)$
we have $|x| = |x G'|$, hence by  \cite[Observation, p.87]{B-F-M} we have $G \otimes G \simeq \Delta(G) \times (G \wedge G)$ is residually finite.

\end{proof}